\theoremstyle{plain}
\newtheorem{lem}{Lemma}
\newtheorem{thm}[lem]{Theorem}
\newtheorem{cor}[lem]{Corollary}
\newtheorem{prop}[lem]{Proposition}
\newtheorem{conj}[lem]{Conjecture}
\theoremstyle{definition}
\newtheorem*{rem}{Remark}
\newtheorem{ex}{Example}
\newtheorem{defn}[lem]{Definition}
\newcommand{\qth}{q^{-1}[3]_q}
\newcommand{\Z}{\mathbb{Z}}
\newcommand{\Q}{\mathbb{Q}}
\newcommand{\X}{\mathcal{X}}
\newcommand{\G}{\mathcal{G}}
\newcommand{\Tr}{\textup{Tr}}
\newcommand{\SL}{\mathrm{SL}}
\newcommand{\PSL}{\mathrm{PSL}}
\newcommand{\PGL}{\mathrm{PGL}}
\newcommand{\half}{\frac{1}{2}}
\def\a{\alpha}
\def\b{\beta}
\def\g{\gamma}
\title{On $q$-deformed Markov numbers.
Cohn matrices and\\ perfect matchings with weighted edges}
\author{Sam Evans,
Perrine Jouteur,
Sophie Morier-Genoud,
Valentin Ovsienko}
\date{}
\begin{document}

\maketitle

\begin{abstract}

We consider a natural $q$-deformation of the classical Markov numbers.
This $q$-deformation is closely related to $q$-deformed rational numbers
recently introduced by two of us.
Both notions, those of $q$-rationals and $q$-Markov numbers, are based on
invariance with respect to the action of the modular group $\PSL(2,\Z)$.
We prove that every Markov number has a unique $q$-deformation,
which is a monic unimodal palindromic Laurent polynomial with positive integer coefficients.
The $q$-Markov numbers can be calculated in terms of the traces of $q$-deformed Cohn matrices,
and we show that $q$-Markov numbers are independent of the choice of such matrices.
We construct a combinatorial model counting perfect matchings of snake graphs
with weighted edges.

\end{abstract}

\section{Introduction}

A classical {\it Markov triple} is a triple $(a,b,c)$
of positive integers satisfying the Diophantine equation
\begin{equation}
\label{MarEq}
a^2+b^2+c^2=3abc,
\end{equation}
called the Markov equation.
The  {\it Markov numbers} are the positive integers appearing in a Markov triple.
The sequence of Markov numbers (sequence A002559 in~\cite{OEIS}) starts as follows
$$
1, 2, 5, 13, 29, 34, 89, 169, 194, 233, 433, 610, 985, 1325, \ldots
$$
Discovered in~\cite{Mar}, Markov numbers play crucial role in many branches of mathematics,
from number theory to topology, combinatorics, and mathematical physics.
The celebrated Markov, or Frobenius~\cite{Fro} {\it uniqueness conjecture} affirms that every Markov number
appears exactly once as the maximal number of a Markov triple.
We refer to~\cite{Aig,Reu} for excellent overviews of the status of this conjecture,
and, more generally, for the Markov theory.

The goal of this paper is to study the $q$-analogue of the Markov equation
\begin{eqnarray}
\label{qMarEq}
a_q^2+b_q^2+c_q^2&=&q^{-1}\left[3\right]_qa_qb_qc_q
\;+\;
\left(q - 1\right)(q^{-1} - 1),
\end{eqnarray}
where $\left[3\right]_q=q^2+q+1$ and $q$ is a formal variable.
We will be considering Laurent polynomials in~$q$ with integer coefficients that appear in a triple
$(a_q,b_q,c_q)\in\Z[q^{\pm1}]^3$, satisfying~\eqref{qMarEq}.
We will furthermore assume that substituting $q=1$ we obtain a Markov triple of positive integers:
$$
(a_q,b_q,c_q)\Big\vert_{q=1}=(a,b,c).
$$
In this sense, we understand the triple of polynomials $(a_q,b_q,c_q)$ 
as a $q$-deformation of the Markov triple $(a,b,c)$.
Note that we do not assume a-priori any property of coefficients of Laurent polynomials, 
except that they are integers.

Our study of the equation~\eqref{qMarEq} is motivated, in part, by
a natural appearance of this equation in the context of $q$-deformed rationals.
The solutions to~\eqref{qMarEq} are polynomials with many beautiful properties
such as positivity, palindromicity, and unimodality.
Bearing in mind the idea that a $q$-deformation retains all of the information about the initial object, but often with more detail, it is always interesting to consider natural $q$-analogues.
For instance, the uniqueness conjecture is clearly true in the $q$-deformed case.
This paper is a step in the study of the $q$-analogue of the Markov equation.
We expect further development of the subject and, in particular, more connections to other topics, such as
the theory of dimer coverings.

The first result of this paper is the following existence and uniqueness statement
that will be proved in Section~\ref{UniSec}.

\begin{thm}
\label{ExtUniq}
For every Markov triple $(a,b,c)\in\Z^3_{>0}$
there exists a unique triple of Laurent polynomials with integer coefficients $(a_q,b_q,c_q)\in\Z[q^{\pm1}]^3$ 
which are solutions to~\eqref{qMarEq}
such that $(a_q,b_q,c_q)\vert_{q=1}=(a,b,c)$.
\end{thm}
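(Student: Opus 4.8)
The plan is to exploit the tree structure of Markov triples. Recall that the Markov triples form a binary tree rooted at $(1,1,1)$ (or $(1,2,5)$ after removing the two singular triples), where each triple $(a,b,c)$ with $c$ maximal has two children obtained by the mutations $(a,b,c)\mapsto(a,c,3ac-b)$ and $(a,b,c)\mapsto(c,b,3bc-a)$. We proceed by induction along this tree. First I would establish the base case: for $(1,1,1)$ one checks directly that $(1,1,1)$ is the unique solution of~\eqref{qMarEq} with constant (degree-zero Laurent) entries specializing to $(1,1,1)$, and similarly handle $(1,1,2)$, $(1,2,5)$ by a short computation, which anchors the induction.

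For the inductive step, suppose $(a_q,b_q,c_q)$ is the (unique) $q$-deformation of a Markov triple $(a,b,c)$, and consider a child $(a,c,c')$ with $c'=3ac-b$. The natural candidate for its $q$-deformation is $c'_q := q^{-1}[3]_q\,a_q c_q - b_q$; I would first verify that $(a_q,c_q,c'_q)$ indeed satisfies~\eqref{qMarEq}. This is a purely algebraic identity: writing $S = a_q^2+b_q^2+c_q^2 - q^{-1}[3]_q a_q b_q c_q - (q-1)(q^{-1}-1)$, which vanishes by hypothesis, one substitutes $c'_q$ into the analogous expression $S' = a_q^2+c_q^2+(c'_q)^2 - q^{-1}[3]_q a_q c_q c'_q - (q-1)(q^{-1}-1)$ and checks that $S' = S$ (the Markov equation is a quadratic in each variable, so the two roots $b_q$ and $c'_q$ are related by Vieta, exactly as in the classical case). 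Since $S=0$, we get $S'=0$. Specializing $q=1$ gives $c'_q|_{q=1} = 3ac-b = c'$, so $(a_q,c_q,c'_q)$ is a $q$-deformation of $(a,c,c')$; the symmetric mutation is handled identically. This proves \emph{existence} of a $q$-deformation for every Markov triple.

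For \emph{uniqueness}, the key point is that the mutation is invertible over $\Z[q^{\pm1}]$: from a $q$-deformation $(a'_q,b'_q,c'_q)$ of $(a,c,c')$ one recovers a $q$-deformation of the parent by the inverse mutation $b_q := q^{-1}[3]_q\, a'_q b'_q - c'_q$ (again using the Vieta relation for the quadratic $S'=0$), which is again a Laurent polynomial with integer coefficients and specializes correctly at $q=1$. Thus any $q$-deformation of a given Markov triple descends, via inverse mutations along the unique path in the Markov tree, to a $q$-deformation of the root; by the base case the root's deformation is unique, and then re-mutating back up the tree shows the original deformation was forced at every step. Two subtleties need care here. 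First, a given Markov triple $(a,b,c)$ may have several presentations (orderings), and one must check the candidate deformation is independent of which entry is taken to be the ``new'' one — this follows from the symmetry of~\eqref{qMarEq} in $a_q,b_q,c_q$ together with the uniqueness already established for the parent. Second, and this is where I expect the main obstacle, one must rule out ``spurious'' $q$-deformations that do not arise from a mutation of a parent deformation: given an arbitrary triple $(a_q,b_q,c_q)\in\Z[q^{\pm1}]^3$ solving~\eqref{qMarEq} with $(a_q,b_q,c_q)|_{q=1}=(a,b,c)$, why must, say, the inverse mutation $q^{-1}[3]_q a_q b_q c_q - (a_q^2+b_q^2)/c_q\ \ldots$ produce something with the right properties? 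The clean way around this is to fix the root and run the induction purely on the tree: one shows that \emph{if} a solution exists it must equal the mutation of the parent's solution (because $c'_q$ is uniquely determined as the ``other root'' of the quadratic $S'=0$ once $a_q,c_q$ are fixed, and $a_q,c_q$ are the parent's entries by the induction hypothesis applied after an inverse mutation — which requires knowing the parent data is forced). Making this circularity-free amounts to showing that the inverse mutation takes \emph{any} valid $q$-deformation of a non-root triple to a valid $q$-deformation of its (unique) parent triple; once that lemma is in hand, well-foundedness of the tree closes the argument. I would isolate this descent lemma as the technical heart of the proof and verify the Laurent-polynomiality of the inverse mutation carefully, since that is the only place integrality could in principle fail.
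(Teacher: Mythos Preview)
Your overall strategy---mutate along the Markov tree using the Vieta relation, then use the involutivity of the mutation and the uniqueness of each triple's position in the classical tree---is exactly the paper's approach. But you have misidentified where the work lies.

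The ``descent lemma'' you flag as the technical heart is in fact immediate. The inverse mutation is the \emph{same} formula as the forward one, $c'_q = q^{-1}[3]_q\,a_q b_q - c_q$, since the transformation is an involution; this is visibly a Laurent polynomial with integer coefficients, and your own Vieta computation already shows it preserves the equation. There is no integrality issue to worry about, and no circularity once you induct on depth in the tree.

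The genuine gap is your base case. You verify that $(1,1,1)$ is the unique \emph{constant} solution specializing to $(1,1,1)$, but you never rule out non-constant Laurent polynomial solutions specializing to $(1,1,1)$. Your tree-descent argument cannot handle this: the root has no parent to descend to, and mutating any entry of a putative non-constant deformation of $(1,1,1)$ just lands you at a deformation of $(1,1,2)$, from which the ``descend to parent'' move sends you straight back. The paper closes this gap with a degree argument: if $d_1\le d_2\le d_3$ are the degrees and $d_3\ge 1$, comparing top-degree terms in~\eqref{qMarEq} forces $2d_3 = d_1+d_2+d_3+1$ and $\alpha_3 = \alpha_1\alpha_2$ for the leading coefficients, so $q^{-1}[3]_q a_q b_q$ and $c_q$ share the same leading term. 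Hence mutating the highest-degree entry strictly lowers the maximal degree, and iterating drives any $q$-Markov triple down to a constant triple, which must be $(1,1,1)$. This degree descent (the content of the paper's Proposition~\ref{RecProp}) is what actually anchors the induction; once you have it, everything else is as you describe.
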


Proof of Theorem~\ref{ExtUniq} is based on the following recurrent procedure 
that provides one way to calculate the $q$-Markov numbers explicitly.
This recurrence is very similar to the classical case.

\begin{prop}
\label{RecProp}
Every $q$-Markov triple $(a_q,b_q,c_q)$ can be obtained from the triple $(1,1,1)$
by performing a sequence of transformations $(a_q,b_q,c_q)\mapsto(a_q,b_q,c'_q)$, where
\begin{eqnarray}
\label{qMutEq}
c'_q &:=& q^{-1}\left[3\right]_qa_qb_q-c_q,
\end{eqnarray}
combined with permutations of $a_q,b_q,$ and $c_q$.
\end{prop}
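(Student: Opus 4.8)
The plan is to mirror the classical Markov tree structure, transferred to the $q$-setting through the $q$-mutation rule~\eqref{qMutEq}. First I would observe that the map $(a_q,b_q,c_q)\mapsto(a_q,b_q,c'_q)$ with $c'_q=q^{-1}[3]_qa_qb_q-c_q$ is an involution on the set of solutions to~\eqref{qMarEq}: indeed, for fixed $a_q,b_q$, equation~\eqref{qMarEq} is a monic quadratic in the third variable, $c_q^2-q^{-1}[3]_qa_qb_q\,c_q+(a_q^2+b_q^2-(q-1)(q^{-1}-1))=0$, whose two roots $c_q,c'_q$ satisfy $c_q+c'_q=q^{-1}[3]_qa_qb_q$. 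Hence $c'_q$ is again a solution, and since $q^{-1}[3]_qa_qb_q\in\Z[q^{\pm1}]$ and $c_q\in\Z[q^{\pm1}]$, so is $c'_q$; moreover setting $q=1$ gives $c'_1=3ab-c$, so $(a,b,c'_1)$ is the classically mutated Markov triple. Thus the $q$-mutation~\eqref{qMutEq}, combined with permutations, acts on the set of $q$-Markov triples and descends under $q\mapsto1$ to the classical Markov moves.

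Next I would set up the recursion on the classical side. It is a standard fact (see~\cite{Aig}) that every Markov triple is obtained from $(1,1,1)$ by a finite sequence of classical mutations $c\mapsto 3ab-c$ and permutations; equivalently, the Markov tree is generated from the root $(1,1,1)$ by these moves. I would argue by induction on the length $n$ of such a generating sequence. For $n=0$ the $q$-triple is $(1,1,1)$, which solves~\eqref{qMarEq} since $1+1+1=q^{-1}[3]_q+(q-1)(q^{-1}-1)$, i.e. $3=q+1+q^{-1}+2-q-q^{-1}=3$ — the identity holds. For the inductive step, suppose the classical triple $(a,b,c)$ reached in $n$ steps has a unique $q$-deformation $(a_q,b_q,c_q)$ solving~\eqref{qMarEq} (this uses Theorem~\ref{ExtUniq}); applying a classical mutation to get $(a,b,c')$ in $n+1$ steps, I claim the corresponding $q$-deformation is exactly $(a_q,b_q,c'_q)$ with $c'_q$ as in~\eqref{qMutEq}. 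By the first paragraph this triple solves~\eqref{qMarEq} and reduces to $(a,b,c')$ at $q=1$; by the uniqueness half of Theorem~\ref{ExtUniq}, it \emph{is} the $q$-deformation of $(a,b,c')$. Permutations are handled trivially since~\eqref{qMarEq} is symmetric in $a_q,b_q,c_q$.

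The remaining point is to go the other direction: every $q$-Markov triple arises this way. Given an arbitrary $q$-Markov triple $(a_q,b_q,c_q)$, its specialization $(a,b,c)$ at $q=1$ is a classical Markov triple, hence reachable from $(1,1,1)$ by classical moves; running those same moves in the $q$-world starting from $(1,1,1)$ produces, by the inductive argument above, a $q$-triple whose $q=1$ specialization is $(a,b,c)$ and which solves~\eqref{qMarEq}; by the uniqueness in Theorem~\ref{ExtUniq} it must coincide with $(a_q,b_q,c_q)$. This closes the loop.

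The main obstacle is the well-definedness of the inductive construction, i.e. making sure the $q$-mutation really produces a \emph{Laurent polynomial} and that it is consistent regardless of which generating path in the Markov tree one chooses. The Laurent-polynomiality of $c'_q$ is immediate from the two-roots-sum identity, so the genuine subtlety is path-independence — but this is precisely what the uniqueness clause of Theorem~\ref{ExtUniq} buys us: any two $q$-triples over the same classical triple and solving~\eqref{qMarEq} coincide, so the tree of $q$-mutations is automatically consistent and is a faithful lift of the classical Markov tree. I would therefore present Theorem~\ref{ExtUniq} as already proved (as the excerpt permits) and keep the argument here light, emphasizing the involution property of~\eqref{qMutEq} and the $q=1$ specialization as the two load-bearing observations.
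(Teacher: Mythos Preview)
Your argument is logically circular. In this paper, Theorem~\ref{ExtUniq} (existence and uniqueness of the $q$-deformation of a Markov triple) is \emph{deduced from} Proposition~\ref{RecProp}, not the other way around: the introduction states explicitly that ``Proof of Theorem~\ref{ExtUniq} is based on the following recurrent procedure'', and Section~\ref{UniSec} finishes with ``Theorem~\ref{ExtUniq} readily follows from Proposition~\ref{RecProp}''. So you cannot invoke the uniqueness half of Theorem~\ref{ExtUniq} while proving Proposition~\ref{RecProp}; that is precisely the assertion you are trying to establish. Your inductive step and your ``other direction'' paragraph both lean on this uniqueness, so the argument as written does not stand on its own.

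The paper's proof avoids this by a direct \emph{descent} argument on degrees, with no appeal to uniqueness. After recording (as you did) that the $q$-mutation is an involution preserving the set of $q$-Markov triples, one orders the degrees $d_1\le d_2\le d_3$ of $a_q,b_q,c_q$. Comparing top-degree terms in~\eqref{qMarEq} forces $2d_3=d_1+d_2+d_3+1$ and equality of the leading coefficients of $c_q$ and of $q^{-1}[3]_qa_qb_q$; hence in $c'_q=q^{-1}[3]_qa_qb_q-c_q$ the top term cancels and $\deg c'_q<d_3$. Iterating strictly lowers the maximal degree until all three polynomials are constants, and a one-line check shows the only constant solution of~\eqref{qMarEq} with positive integer values is $(1,1,1)$. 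Reversing the moves (they are involutions) gives the statement. This is the missing idea you need to replace the appeals to Theorem~\ref{ExtUniq}.
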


Theorem~\ref{ExtUniq} means that every Markov triple $(a,b,c)$,
has a unique $q$-analogue $(a_q,b_q,c_q)$ solution to~\eqref{qMarEq},
that we call {\it the $q$-Markov triple}.
Furthermore, combined with Proposition~\ref{RecProp} and under the assumption that the Markov conjecture holds true,
Theorem~\ref{ExtUniq} implies that every Markov number $m$ has a unique $q$-analogue
that we denote by $m_q$.

The first $q$-deformations of the Markov numbers are as follows
$$
\begin{array}{rcl}
1_q&=&1,\\[2pt]
2_q&=&q+q^{-1},\\[2pt]
5_q&=&q^2+q+1+q^{-1}+q^{-2},\\[2pt]
13_q&=&q^3 + 2q^2 + 2q + 3 + 2q^{-1} + 2q^{-2} + q^{-3}, \\[2pt]
29_q&=&q^4 + 2q^3 + 4q^2 + 5q + 5 + 5q^{-1} + 4q^{-2} + 2q^{-3} + q^{-4},  \\[2pt]
34_q&=&q^4 + 3q^3 + 4q^2 + 6q + 6 + 6q^{-1} + 4q^{-2} + 3q^{-3} + q^{-4},  \\[2pt]
89_q&=&q^5 + 4q^4 + 7q^3 + 11q^2 + 14q + 15 + 14q^{-1} + 11q^{-2} + 7q^{-3} + 4q^{-4} + q^{-5},    \\[2pt]
169_q&=&q^6 + 3q^5 + 8q^4 + 14q^3 + 20q^2 + 25q  + 27 + 25q^{-1} + 20q^{-2}+ 14q^{-3} + 8q^{-4} + 3q^{-5}+ q^{-6}, \\[2pt]
194_q&=&q^6 + 4q^5 + 9q^4 + 16q^3 + 23q^2 + 29q + 30 + 29q^{-1} + 23q^{-2} + 16q^{-3} + 9q^{-4} + 4q^{-5}+ q^{-6}, \\[2pt]
\cdots&&\cdots
\end{array}
$$
The sequence of coefficients of these polynomials does not appear in the OEIS.
The study of combinatorial and geometric properties of this sequence 
seems to be an interesting and challenging open problem.
In Section~\ref{SnakeSec}, we give one such combinatorial interpretation.

It is well-known that the classical Markov numbers are closely related to
modular group $\PSL(2,\Z)$.
The $q$-Markov numbers we consider 
are  related to $q$-deformed rational numbers~\cite{MGOfmsigma}
and the $q$-deformed modular group~\cite{LMGadv}.
This connection is explored in Section~\ref{CohnSec}.
In particular, we calculate explicitly $q$-analogues of the Cohn matrices,
following Aigner's classification~\cite{Aig},
and prove (see Theorem~\ref{CohnShiftThm}) that the $q$-Markov numbers do not depend on the choice of the initial Cohn matrices.

Recall that every Markov number is associated with a so-called {\it snake graph}.
For explanation of this notion; see, e.g.~\cite{Pro,Aig,Sch} and Section~\ref{MarSec}.
A famous result states that Markov numbers are equal to the number of perfect matchings
(or dimer coverings) of the corresponding snake graphs.
For more details; see~\cite[Thm 7.1]{Pro}, and also~\cite[Thm. 7.12]{Aig},\cite[Thm. 5.1]{Sch}.
It is a general rule that the coefficients of $q$-deformed combinatorial objects have the same combinatorial meaning as the initial object, but more refined.
We give a combinatorial model for $q$-Markov numbers in terms of the perfect matchings
of snake graphs with weighted edges. 

Our rule to assign weights to the edges of snake graphs is the following.
The edges at the western and southern borders have weights $q^{-1}$ and $q$ alternately,
all other edges have weight~$1$; see Section~\ref{SnakeSec}.
Taking the product of weights of the edges of every perfect matching, we define
the {\it weighted number} of perfect matchings which is a Laurent polynomial in~$q$.

The main result of the paper is as follows (for a more precise statement; see Theorem~\ref{SnakeThmBis}).

\begin{thm}
\label{SnakeThm}
The $q$-Markov number $m_q$ equals the weighted number of perfect matchings in the snake graph corresponding to~$m$.
\end{thm}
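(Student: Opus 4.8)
The plan is to set up two parallel recursions---one for $q$-Markov numbers coming from Proposition~\ref{RecProp}, one for the weighted perfect-matching polynomials of snake graphs---and to match them term by term along the Markov tree. First I would recall the combinatorial structure: snake graphs are indexed by the vertices of the Stern--Brocot (or Markov) tree, and a mutation $(a_q,b_q,c_q)\mapsto(a_q,b_q,c_q')$ in \eqref{qMutEq} corresponds on the combinatorial side to gluing two smaller snake graphs along an edge to build a larger one. The classical statement (\cite[Thm.~7.1]{Pro}, \cite[Thm.~7.12]{Aig}) says the \emph{unweighted} matching number satisfies exactly the classical Markov recurrence $c'=3ab-c$; I would re-derive this in the form of a ``skein''-type identity for matching polynomials, namely that if $G_c$ is obtained by the appropriate gluing of $G_a$ and $G_b$, then $M(G_{c'})\cdot M(G_c) = (\text{something})\cdot M(G_a)^2 M(G_b)^2$ type relation, or more precisely a linear recurrence $M(G_{c'}) = \lambda\, M(G_a)M(G_b) - M(G_c)$ with $\lambda$ determined by a local count near the gluing edge. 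The goal is to show that with the prescribed edge weights ($q^{\pm1}$ alternating on the western and southern borders, $1$ elsewhere) this local factor becomes exactly $q^{-1}[3]_q$, so the weighted matching polynomials satisfy \eqref{qMutEq}.

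The key steps, in order, would be: (1) Fix conventions for the snake graph attached to a Markov number $m$ and its edge-weighting, and define $W(G)$ as the sum over perfect matchings of the product of edge weights. Verify the base cases: the snake graphs for $1$ and $2$ are a single edge / a single square, and one checks directly $W = 1$ and $W = q+q^{-1}$, matching $1_q$ and $2_q$. (2) Establish the combinatorial gluing rule: the snake graph $G_{c'}$ for the mutated triple is obtained from $G_a$ and $G_b$ by overlapping them along a common sub-snake (an edge or a tile), and every perfect matching of $G_{c'}$ either restricts to a matching on each piece or picks up a ``crossing'' contribution. (3) Carry out the weighted count of the gluing: partition perfect matchings of $G_{c'}$ according to their behavior on the three or so edges near the gluing locus, and show the total weighted count equals $q^{-1}[3]_q\, W(G_a)\,W(G_b) - W(G_c)$. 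This is where the specific weights $q^{-1}$, $1$, $q$ on the three relevant boundary edges produce the polynomial $q^{-2}+q^{-1}+1 \cdot(\ldots)$ summing to $q^{-1}[3]_q = q^{-1}+1+q$ after the appropriate normalization. (4) Conclude by induction on the Markov tree, using Theorem~\ref{ExtUniq} (uniqueness) to identify the resulting weighted polynomials with the $q$-Markov numbers $m_q$, and note that the output is automatically a Laurent polynomial with the stated palindromic/positivity features since it is a sum of monomials $\prod q^{\pm 1}$.

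I expect the main obstacle to be step~(3): getting the \emph{local} weighted gluing identity exactly right. In the classical case the factor is the clean integer $3$, which hides a genuine combinatorial decomposition (a matching of the glued graph corresponds to a matching of one part, or the other, or a ``new'' one through the seam---and multiplicities conspire to give $3ab$ rather than $2ab$). In the $q$-weighted setting one must track \emph{which} boundary edge (carrying $q^{-1}$, $1$, or $q$) is used in each case, and confirm the weights align so that the three contributions assemble into $q^{-1}[3]_q$ and not some other Laurent polynomial with $q=1$ specialization $3$. A subtlety is that the alternating $q^{\pm1}$ pattern on the western/southern border is not symmetric, so I would need to check that the gluing respects the parity of the alternation---i.e., that when $G_a$ and $G_b$ are glued, their border-weight patterns match up consistently along the new boundary of $G_{c'}$. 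Handling this bookkeeping carefully, probably by an explicit analysis of the few tiles adjacent to the gluing edge together with an induction hypothesis that pins down the weight pattern on the whole boundary, is the technical heart of the argument; once it is in place, the global induction and the appeal to uniqueness are routine.
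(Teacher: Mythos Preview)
Your approach is genuinely different from the paper's, and the difficulty you flag in step~(3) is not a bookkeeping detail but a real gap.

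The paper never attempts to verify the quadratic mutation identity $W(G_{c'}) = q^{-1}[3]_q\,W(G_a)W(G_b) - W(G_c)$ on the matching side. Instead it runs a \emph{transfer-matrix} argument, modelled on Aigner's proof of the classical statement. One builds $\G_t(q)$ letter by letter from the Christoffel word $\tilde w_t = Aw_1\cdots w_sB$ and tracks the pair of partial weighted matching counts $(\mu_{i-1},\mu_i)$. The key computation is that attaching an $A$-piece (resp.\ $B$-piece) sends this pair to $A(1)_q\binom{\mu_{i-1}}{\mu_i}$ (resp.\ $B(1)_q\binom{\mu_{i-1}}{\mu_i}$), where $A(1)_q,B(1)_q$ are precisely the $q$-Cohn matrices of Example~\ref{UsefulEx}. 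Thus $\mu^t(q)$ is a fixed linear functional of the Cohn matrix product $C_t = A(1)_qM_1\cdots M_sB(1)_q$; Lemma~\ref{Trace_coeffs_relation} identifies that functional with $\Tr(C_t)/(q^{-1}[3]_q)$, and Theorem~\ref{CohnShiftThm} already says this equals $m^t_q$. The mutation relation~\eqref{qMutEq} is never checked combinatorially---it was absorbed upstream into the trace identity $\Tr(MN)=\Tr(M)\Tr(N)-\Tr(MN^{-1})$.

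Your route asks instead for a direct combinatorial proof of the \emph{additive} Vieta relation for weighted matchings. This is not a local skein identity in the usual sense: snake-graph skein relations (Canakci--Schiffler) are \emph{multiplicative}, of the shape $M(G)M(G')=M(G_1)M(G_2)+M(G_3)M(G_4)$, and would naturally produce something like $c_qc'_q=a_q^2+b_q^2+(\text{correction})$ rather than $c_q+c'_q=q^{-1}[3]_q\,a_qb_q$. Your heuristic that $q^{-1}+1+q$ should appear from ``three edges at the seam with weights $q^{-1},1,q$'' does not straightforwardly materialize: gluing $G_a$ to $G_b$ to form $G_{c'}$ replaces the terminal $B$-piece and initial $A$-piece by their non-terminal versions, altering several boxes rather than a single edge, and you would still need the leftover terms to reassemble into $W(G_c)$ for the graph at the \emph{opposite} end of the fishbone. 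None of this is supplied. The paper's linearization via Cohn matrices sidesteps the whole issue; if you want to pursue your line, the honest target is the multiplicative relation $c_qc'_q=a_q^2+b_q^2-(q-1)(q^{-1}-1)$, which is closer in shape to a skein identity, followed by an appeal to~\eqref{qMarEq}.
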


Figure~\ref{SnakeFig1} provides an example of a snake graph with weighted edges.
The total number of perfect matchings of this graph is equal to the Markov number $433$.
Taking the product of weights of the edges of every perfect matching,
one obtains the polynomial $433_q$.

\begin{figure}[H]
	\centering
	\begin{tikzpicture}[scale=0.85]

		\draw[line width=2pt,blue] (0,0)-- node[left]{$q^{-1}$}(0,1);
		\draw[line width=0.7pt] (2,2)-- (2,3);
		\draw[line width=2pt,red] (0,0)-- node[below]{$q$}(1,0);
		\draw[line width=2pt,blue] (1,0)-- node[below]{$q^{-1}$}(2,0);
	        \draw[line width=2pt,red] (2,0)-- node[below]{$q$}(3,0);
		\draw[line width=0.7pt] (0,1)-- (3,1);
		\draw[line width=0.7pt] (1,0)-- (1,1);
		\draw[line width=0.7pt] (2,0)-- (2,1);
		\draw[line width=2pt,red] (2,1)-- node[left]{$q$}(2,2);
		\draw[line width=2pt,blue] (2,2)-- node[left]{$q^{-1}$}(2,3);
		\draw[line width=0.7pt] (2,2)-- (3,2);
		\draw[line width=2pt,blue] (3,2)-- node[below]{$q^{-1}$}(4,2);
		\draw[line width=2pt,red] (4,2)-- node[below]{$q$}(5,2);
		\draw[line width=0.7pt] (3,0)-- (3,3);
		\draw[line width=0.7pt] (2,3)-- (5,3)--(7,3);
                 \draw[line width=0.7pt] (4,2)-- (4,3);
                 \draw[line width=2pt,red] (6,3)--  node[left]{$q$}(6,4);
                 \draw[line width=2pt,blue] (6,4)-- node[left]{$q^{-1}$}(6,5);
                 \draw[line width=0.7pt] (7,2)-- (7,5);
                 \draw[line width=0.7pt] (6,5)-- (9,5);
                 \draw[line width=0.7pt] (6,4)-- (7,4);
                 \draw[line width=2pt,blue] (7,4)-- node[below]{$q^{-1}$}(8,4);
                 \draw[line width=2pt,red] (8,4)-- node[below]{$q$}(9,4);
                 \draw[line width=0.7pt] (7,4)-- (7,5);
                 \draw[line width=0.7pt] (6,4)-- (6,5);
                 \draw[line width=2pt,blue] (5,2)-- node[below]{$q^{-1}$}(6,2);
		\draw[line width=2pt,red] (6,2)-- node[below]{$q$}(7,2);
		
		    \draw[line width=0.7pt] (5,3)-- (5,2);
		        \draw[line width=0.7pt] (6,3)-- (6,2);
		            \draw[line width=0.7pt] (8,4)-- (8,5);
		                \draw[line width=0.7pt] (9,4)-- (9,5);
		
		\draw (0.5, 0.5) node[] {$2$};
		\draw (1.5, 0.5) node[] {$3$};
		\draw (2.5, 0.5) node[] {$5$};
		\draw (2.5, 1.5) node[] {$7$};
		\draw (2.5, 2.5) node[] {$12$};
		\draw (3.5, 2.5) node[] {$17$};
		\draw (4.5, 2.5) node[] {$29$};
		\draw (5.5, 2.5) node[] {$46$};
		\draw (6.5, 2.5) node[] {$75$};
		\draw (6.5, 3.5) node[] {$104$};
		\draw (6.5, 4.5) node[] {$179$};
		\draw (7.5, 4.5) node[] {$254$};
		\draw (8.5, 4.5) node[] {$433$};
		
			\end{tikzpicture}
\caption{A snake graph with weighted edges. Edges coloured in red have weight~$q$ 
and edges coloured in blue have weight~$q^{-1}$, other edges have weigth~$1$.}
\label{SnakeFig1}
\end{figure}

Perfect matchings (or dimer coverings) with weighted edges are
often used in combinatorics and other branches of mathematics and statistical physics.
Ideas that are quite close to our approach were applied in~\cite{Kuo} in the case of 
the Aztec diamond graph.
For a survey; see, e.g.~\cite{Ken}.
It could be an interesting question to ask whether the weight system corresponding to $q$-Markov numbers has some
particular properties.
Let us also mention that weighted perfect matchings of snake graphs were considered in~\cite{BOSZ},
but with weights that are not defined on the edges.

Recall that Markov numbers are naturally presented in a form of a binary tree, and consequently
every Markov number corresponds to a rational $t\in\Q$ such that $0\leq t\leq1$
and is often denoted by $m^t$.
We refer to~\cite{Aig} for the details; see also Section~\ref{MarSec}.
The claim that $m^t$ is uniquely determined by its rational label~$t$
is equivalent to the Markov uniqueness conjecture, see~\cite{Aig}.

As an application of Theorem~\ref{SnakeThm},
we will show in Section~\ref{InjSec} that in the $q$-deformed case the map
$$
t\in\Q_{[0,1]}\mapsto m^t_q
$$
is injective, i.e. every $q$-Markov number $m^t_q$ is uniquely determined by its rational label $t$.
Note that a similar result, but for a different notion of $q$-deformed Markov numbers was obtained in~\cite{LLS}.

\begin{cor}
\label{UniConj}
Two $q$-Markov numbers, $m^t_q$ and $m^{t'}_q$, with $t\not=t'$
are different Laurent polynomials in~$q$.
\end{cor}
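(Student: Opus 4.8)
The plan is to derive Corollary~\ref{UniConj} from Theorem~\ref{SnakeThm} by tracking how the rational label~$t$ can be recovered from the Laurent polynomial $m^t_q$. The key point is that the weighted perfect-matching count is a refinement of the ordinary count, so a $q$-Markov number carries at least as much information as the corresponding classical Markov number; the difficulty is that the classical uniqueness conjecture is open, so we are \emph{not} allowed to conclude ``$m^t_q = m^{t'}_q \Rightarrow m^t = m^{t'} \Rightarrow t = t'$''. Instead I would extract finer invariants directly from the polynomial.

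First I would recall the snake-graph description from Section~\ref{SnakeSec}: to each $t \in \Q_{[0,1]}$ with continued-fraction-type data there is associated a snake graph $\G_t$, built as a sequence of unit squares glued along edges and turning alternately, whose shape (the sequence of ``straight/turn'' moves, equivalently the sign sequence or the continued fraction of~$t$) determines~$t$ uniquely. By Theorem~\ref{SnakeThm}, $m^t_q = \sum_{M} \operatorname{wt}(M)$ where $M$ runs over perfect matchings of $\G_t$ and $\operatorname{wt}(M)$ is the product of the edge weights ($q^{\pm1}$ on the alternating western/southern border edges, $1$ elsewhere). The next step is to identify special matchings whose weights are ``extremal'' and hence visible in the polynomial. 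Concretely, the two matchings consisting entirely of the ``vertical'' edges, respectively the ``horizontal'' edges, of the snake graph (the minimal and maximal matchings in the standard poset of matchings of a snake graph) have weights that are the smallest and largest powers of~$q$ occurring, and from the degree span one reads off $\ell(t)$, the length of the snake graph, i.e. the number of squares. More is true: by choosing matchings that use a controlled number of border edges on each ``arm'' of the snake, one can read off the full sequence of straight/turn moves, because each turn in the snake graph changes which border edges are available and thus changes the set of achievable weights in a combinatorially transparent way.

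The cleanest route, which I would try to make precise, is to show that the \emph{support} of $m^t_q$ together with the coefficient pattern near the top and bottom degrees already determines the shape word of $\G_t$. Since the weights live only on the western and southern boundary, and these boundary edges are grouped into maximal ``runs'' corresponding to the straight segments of the snake, the generating function $\sum_M \operatorname{wt}(M)$ factors, up to lower-order corrections, according to these runs; comparing the two ends of the polynomial (using palindromicity, Theorem~\ref{ExtUniq}, so that one end suffices) lets us peel off the length of the first run, then induct. This recovers the continued fraction of~$t$, hence~$t$ itself. Thus if $t \neq t'$ then the shape words differ, so the polynomials differ.

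The main obstacle is making the ``peeling'' argument rigorous: I must show that distinct shape words really do give distinct low-order coefficient data, and not merely distinct total matching counts. A safe fallback, if the coefficient bookkeeping becomes unwieldy, is to argue more softly: specialize or take a limit in~$q$ in which the weighted count degenerates to a quantity known to determine~$t$. For instance, one expects that $m^t_q$, viewed together with its associated $q$-Cohn matrices from Section~\ref{CohnSec}, recovers the $q$-deformed rational attached to~$t$ via~\cite{MGOfmsigma}, and $q$-rationals are known to be injective in~$t$ (the map $t \mapsto t_q$ is injective). Concretely, I would relate the trace of the $q$-Cohn matrix used to compute $m^t_q$ (Theorem~\ref{CohnShiftThm} guarantees this is well defined) to the continued-fraction matrix product defining $t_q$, and invoke injectivity of $q$-rationals to conclude $t = t'$. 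Either way, the heart of the proof is that the edge-weighting is fine enough to see the \emph{shape} of the snake graph, not just its size, and the shape is exactly the combinatorial avatar of~$t$.
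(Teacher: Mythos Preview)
Your strategy is on the right track---recover~$t$ from the coefficient data of $m^t_q$ via the perfect-matching model of Theorem~\ref{SnakeThm}---but the execution is both incomplete and considerably more elaborate than necessary. You propose to reconstruct the entire shape word of the snake graph by a ``peeling'' induction on the coefficients, and you yourself flag that this step is not rigorous. The paper's proof (Lemma~\ref{RatLem}) bypasses all of this: it shows that just \emph{two} integers extracted from $m^t_q$ already determine~$t$, namely the top degree~$d$ and the coefficient~$\alpha$ of $q^{d-1}$, via the explicit formula
\[
t=\frac{d-\alpha}{\alpha+1}.
\]

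The two observations you are missing are short and concrete. First, there is a \emph{unique} minimal-weight matching of $\G_t$ (take every weight-$q^{-1}$ boundary edge and complete uniquely with weight-$1$ boundary edges); its weight is $q^{-d}$ with $d=\#X+\#Y-1$, the length of the Christoffel word $w_t$ minus one. Second, every matching of weight $q^{1-d}$ is obtained from the minimal one by a single local ``tile flip'': remove one horizontal $q^{-1}$-edge together with the opposite boundary edge of its box, and replace them by the two internal vertical edges of that box. There are exactly $\#X-1$ such horizontal $q^{-1}$-edges, so $\alpha=\#X-1$. Combining, $t=\#Y/\#X=(d-\alpha)/(\alpha+1)$.

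So you do not need the full shape word (the continued fraction of~$t$), only the pair $(\#X,\#Y)$, and these are read off from the leading two coefficients. Your fallback via $q$-rationals is also shakier than it looks: you would need to extract $t_q$ from $m^t_q$ alone, without access to the Cohn matrix itself, and that link is not established in the paper.
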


\noindent
In fact, we will calculate $t$ explicitly in terms of the coefficients of the Laurent polynomial~$m^t_q$ (Lemma~\ref{RatLem}).

Another application of Theorem~\ref{SnakeThm} concerns the general properties of the Laurent polynomial~$m^t_q$.
The above examples suggest that the Laurent polynomials arising as $q$-Markov numbers
enjoy several nice general properties.
The following statement will be proved in Section~\ref{ShapeS}.

\begin{cor}
\label{PropertThm}
The $q$-Markov numbers are Laurent polynomials with the following properties

(i)
the coefficients of every polynomial are non-negative integers;

(ii)
every such polynomial is monic, i.e. its leading order coefficient equals~$1$;

(iii)
every polynomial representing a $q$-Markov number is palindromic, or reciprocal, i.e.
$$
m^t_{q^{-1}}=m^t_q.
$$
\end{cor}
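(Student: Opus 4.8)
The plan is to derive Corollary~\ref{PropertThm} directly from the combinatorial model of Theorem~\ref{SnakeThm} (or rather its refined version Theorem~\ref{SnakeThmBis}), which expresses $m^t_q$ as the weighted count of perfect matchings of the associated snake graph $\G$, with the weight of a matching being the product of the weights of its edges. Once $m^t_q$ is written this way, the three properties become structural statements about the snake graph and its weight system.

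First I would settle property (i): each perfect matching contributes a single monomial $q^k$ (the product of weights $q^{-1}, q, 1$ over its edges), so $m^t_q$ is a sum of monomials with coefficient $+1$ each, hence has non-negative integer coefficients. For property (ii), monicity, I would identify the unique matching realizing the extreme (highest, equivalently lowest) power of $q$. Because the only edges carrying non-trivial weight lie on the western and southern borders and alternate between $q$ and $q^{-1}$, the maximal degree is attained by the matching that uses as many weight-$q$ border edges as possible; I would argue this matching is unique — typically it is (one of) the two ``staircase'' matchings consisting entirely of boundary edges — so the top coefficient is $1$. Here I would lean on the standard description of the extremal perfect matchings of a snake graph (the two ``minimal''/``maximal'' matchings given by the boundary edges), which is available in the references cited in Section~\ref{MarSec}.

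For property (iii), palindromicity $m^t_{q^{-1}} = m^t_q$, I would exhibit a weight-reversing involution on the set of perfect matchings: an involution $\iota$ on matchings such that the weight of $\iota(M)$ equals the weight of $M$ with $q \mapsto q^{-1}$. The natural candidate is the symmetry of the snake graph itself that swaps the western border with the southern border (a reflection across the main diagonal of each constituent square, threaded through the snake), which interchanges the $q$-weighted and $q^{-1}$-weighted edges while fixing the weight-$1$ edges; applying it to a matching negates the total exponent of $q$. I would check that this reflection is a well-defined graph automorphism of the snake graph and that it indeed exchanges the two colors of boundary edges as drawn — this is the step that needs the precise combinatorial setup of Section~\ref{SnakeSec}, and it is where I expect the main obstacle to lie, since snake graphs ``turn'' and one must verify the color alternation is compatible with the reflection globally, not just on a single tile.

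The main obstacle, then, is purely bookkeeping about the snake graph's geometry: making precise the claim that the west/south boundary edge weights alternate $q, q^{-1}$ in a way that (a) singles out a unique extremal matching and (b) is reversed by the diagonal reflection. If the clean involution is hard to pin down directly, a fallback is to prove (ii) and (iii) by induction along the recursive construction of snake graphs (each snake graph is obtained from a smaller one by attaching tiles), tracking how the extremal matching and the palindrome symmetry propagate under the attachment operation; this dovetails with Proposition~\ref{RecProp} and the recursive nature already used to prove Theorem~\ref{ExtUniq}. Either way, properties (i)--(iii) follow formally once the weighted-matching picture is in hand, so no heavy computation is needed — only a careful reading of the weight assignment.
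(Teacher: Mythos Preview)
Your arguments for (i) and (ii) match the paper's proof: positivity because each matching contributes a single monomial $q^k$, and monicity because there is a unique matching containing all the weight-$q$ boundary edges.

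For (iii), however, the paper's route is both different and much simpler than your primary proposal. The paper does not use the snake graph at all for palindromicity; it observes that the mutation
\[
c'_q = q^{-1}[3]_q\, a_q b_q - c_q
\]
preserves the property $f(q^{-1})=f(q)$, since $q^{-1}[3]_q = q+1+q^{-1}$ is itself palindromic. The initial triple $(1,1,1)$ is trivially palindromic, so Proposition~\ref{RecProp} gives the result in one line. This is precisely your ``fallback'', and it is the correct move.

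Your primary approach for (iii) has a genuine gap. The reflection you describe --- swapping western and southern borders, ``threaded through the snake'' --- is not a graph automorphism of $\G_t$ once the snake turns. A diagonal reflection sends a horizontal tile attachment to a vertical one and vice versa, so it maps $\G_t$ to a \emph{different} snake graph, not to itself. (The symmetry Markov snake graphs do possess is a $180^\circ$ rotation coming from the palindrome $\pi$ in $w_t=X\pi Y$, but that sends the south-west boundary to the north-east boundary, where all weights are $1$, so it does not reverse weights.) There may well be a weight-reversing involution on matchings, but it is not the geometric reflection you propose, and finding one would require more work than the one-line recursive argument.
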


Combining with our results, the results of~\cite{OgRa,Ogu} imply yet another property of $q$-Markov numbers.

\begin{prop}
\label{PalindProp}
The sequence of coefficients of every $q$-Markov number $m^t_q$, except for $2_q=q+q^{-1}$, is unimodal.
\end{prop}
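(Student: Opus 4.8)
The plan is to derive unimodality of the $q$-Markov polynomials from the combinatorial model in Theorem~\ref{SnakeThm} together with the results of~\cite{OgRa,Ogu}. First I would recall precisely what is proved in those references: they establish that certain polynomials attached to snake graphs — arising as numerators/denominators of $q$-deformed rationals, equivalently as weighted matching generating functions of snake graphs — have unimodal coefficient sequences (this is the ``$F$-polynomial'' or continuant-type unimodality result). By Theorem~\ref{SnakeThm}, the $q$-Markov number $m^t_q$ is exactly the weighted perfect-matching polynomial of the snake graph $\cG$ associated with~$m^t$, with the specific weight system described in the introduction (alternating $q^{\pm1}$ on the western and southern boundary edges). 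So the task reduces to checking that the generating polynomial of $\cG$ under this particular weighting falls within the scope of the unimodality theorems of~\cite{OgRa,Ogu}, after an appropriate normalization (multiplying by a power of $q$ to clear denominators, since unimodality of a Laurent polynomial is equivalent to unimodality of the ordinary polynomial obtained by shifting the exponents).

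The key steps, in order, would be: (1) normalize $m^t_q$ to a genuine polynomial $\widetilde m^t_q := q^{N} m^t_q \in \Z_{\geq 0}[q]$, where $N$ is the top exponent appearing in Corollary~\ref{PropertThm}, and observe that unimodality of $m^t_q$ as a Laurent polynomial is literally the same statement as unimodality of $\widetilde m^t_q$; (2) identify $\widetilde m^t_q$ with the polynomial whose unimodality is asserted in~\cite{OgRa,Ogu} — concretely, this should be the $q$-continuant, or equivalently the generating function counting perfect matchings of the snake graph weighted by the number of ``turned'' tiles or boundary edges of a given color, which is exactly the combinatorial quantity those papers analyze; (3) invoke the cited unimodality result to conclude; (4) handle the lone exception $2_q = q + q^{-1}$ separately by inspection — its coefficient sequence is $(1,0,1)$, which has a dip in the middle and hence is not unimodal, explaining the exclusion in the statement. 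One must also double-check the degenerate small cases $1_q = 1$ (trivially unimodal) and $5_q$, $13_q$, etc., directly against the listed examples, which visibly are unimodal.

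The main obstacle will be step~(2): matching conventions. The references~\cite{OgRa,Ogu} work with $q$-rationals and their continuants, whereas Theorem~\ref{SnakeThm} is phrased in terms of a specific edge-weighting of snake graphs. I need to verify that the weight a perfect matching receives under the ``western/southern boundary, alternating $q^{\pm1}$'' rule agrees, up to an overall power of $q$, with the statistic (e.g.\ number of non-boundary or ``straight'' tiles, or the $q$-grading on the continuant) for which~\cite{OgRa,Ogu} prove unimodality. This is where the connection between the Cohn matrix picture of Section~\ref{CohnSec} and the snake-graph picture of Section~\ref{SnakeSec} must be used: the $q$-Markov number is a trace (or matrix entry) of a product of $q$-deformed Cohn matrices, which in turn is a $q$-continuant, and the same continuant is what the references study. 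Once the dictionary is fixed, the argument is immediate; the risk is purely bookkeeping — an off-by-a-monomial discrepancy or a mismatch between ``unimodal'' and ``log-concave'' in the source — so I would state the precise form of the borrowed result as a displayed lemma before applying it.

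\begin{proof}[Proof sketch of Proposition~\ref{PalindProp}]
By Theorem~\ref{SnakeThm}, $m^t_q$ is the weighted perfect-matching polynomial of the snake graph $\cG$ attached to $m^t$, with the weight system of the introduction. Multiplying by a suitable power of~$q$ turns $m^t_q$ into an honest polynomial $\widetilde m^t_q\in\Z_{\geq0}[q]$ with the same (shifted) coefficient sequence, so unimodality of $m^t_q$ is equivalent to unimodality of $\widetilde m^t_q$. Matching the edge-weight statistic on $\cG$ with the $q$-grading on the corresponding $q$-continuant (via the Cohn-matrix description of Section~\ref{CohnSec}), one identifies $\widetilde m^t_q$ with the polynomial whose coefficient sequence is shown to be unimodal in~\cite{OgRa,Ogu}. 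Applying that result gives unimodality for all $m^t_q$ with $m^t\neq 2$. Finally, $2_q=q+q^{-1}$ has coefficient sequence $(1,0,1)$, which is not unimodal, accounting for the stated exception.
\end{proof}
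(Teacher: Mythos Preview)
Your overall strategy—cite \cite{OgRa,Ogu} and match $m^t_q$ (after a monomial shift) with a polynomial they prove unimodal—is the paper's strategy too. But the route you take and the identification you describe are not quite right.

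The paper does not go through the snake-graph model at all. It uses Theorem~\ref{CohnShiftThm}: $m^t_q = \Tr(C_t)/(q^{-1}[3]_q)$. Then \cite[Thm.~7.3]{Ogu} says precisely that $\Tr(C_t)/[3]_q$ is the rank polynomial of a \emph{circular} fence poset, and \cite[Cor.~7.4]{OgRa} says rank polynomials of circular fence posets are unimodal except in one degenerate shape, which in the Cohn-matrix setting corresponds to $C_{\frac11}$, i.e.\ to $2_q$. That is the entire argument.

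Your step~(2) misdescribes the target polynomial. You call it a ``$q$-continuant'' or ``numerator/denominator of a $q$-deformed rational''; those are rank polynomials of \emph{linear} fence posets and correspond to individual matrix entries such as $c_{12}$, not to traces. By Lemma~\ref{Trace_coeffs_relation}, $m^t_q = (q^{-1}-q^{-2})c_{11}+q^{-1}c_{12}$ is a genuine linear combination of two entries, not a single continuant, and the object \cite{Ogu} attaches to it is a \emph{circular} fence poset. The snake-graph detour via Theorem~\ref{SnakeThm} does not shortcut this: to land inside the scope of \cite{OgRa} you still need the trace interpretation, which you only gesture at. So the risk here is not ``purely bookkeeping''—the circular-versus-linear distinction is the substantive step you are missing.

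Finally, your treatment of the exception only verifies by hand that $2_q$ fails unimodality; it does not explain why it is the \emph{only} failure. In the paper's argument that uniqueness comes for free from the classification of exceptional circular fence posets in \cite{OgRa}.
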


Let us also add some historical comments on the subject.
The recent notion of $q$-deformed rationals~\cite{MGOfmsigma}
and the related notion of $q$-deformed modular group~\cite{LMGadv}
suggests that Markov numbers should have a natural $q$-deformation.
Different variants of this notion were tackled in~\cite{LaLa,LLS,Kog,Ogu},
all of them based on the choice of a pair of $q$-deformed Cohn matrices.
In~\cite{LaLa,LLS} a version of $q$-deformed Markov numbers was defined as upper right elements
of some special $q$-deformed Cohn matrices, in~\cite{Kog,Ogu} they were defined as their traces.
The first appearance of the equation~\eqref{qMarEq} is due to Kogiso~\cite{Kog},
who considered this equation as a property of $q$-deformed Markov numbers, rather than their definition.

One important distinction between our approach and the previous ones is that we do not fix
specific Cohn matrices to define $q$-Markov numbers.
The notion of $q$-Markov numbers we study 
coincides (up to a multiple) with that of~\cite{Kog} who considered traces of some particular $q$-deformed Cohn matrices,
and it is different from the one considered in~\cite{LaLa,LLS}, where
$q$-Markov numbers were understood as the upper right element of some particular Cohn matrices. 

The paper is organized as follows.
In Section~\ref{MarSec}, we present well-known results about Markov numbers.
This allows us to introduce the notation and the main ideas of the theory.
Section~\ref{UniSec} contains the proof of the fact that every Markov number~$m^t$ has a canonical $q$-analogue~$m^t_q$.
In Section~\ref{CohnSec}, we introduce the $q$-deformed action
of the modular group $\PSL(2,\Z)$ and calculate $q$-deformed Cohn matrices.
In Section~\ref{SnakeSec}, we define weighted snake graphs and prove our main result.
Finally, in Section~\ref{ExtrSec}, we collect a number of observations, conjectures and open questions.
In particular, we try to outline possible relations of $q$-Markov numbers to cluster algebra.

\section{A compendium on Markov numbers}\label{MarSec}

In this section, we collect well-known facts about the classical Markov numbers
that will be relevant for this paper.

\subsection{A recurrent procedure}

Markov proved that all positive integer solutions of the equation~\eqref{MarEq}
can be obtained from the simplest Markov triple $(a,b,c)=(1,1,1)$
by performing transformations $(a,b,c)\mapsto(a,b,c')$, where
\begin{equation}
\label{MutEq}
c'
\quad:=\quad
3ab-c
\quad=\quad
\frac{a^2+b^2}{c},
\end{equation}
combined with permutations of $a,b$, and $c$.
These transformations are often called ``mutations'' because of the connection to cluster algebra; see Section~\ref{CluSec}.
Note that both expressions in~\eqref{MutEq} are important, 
the first implies that $c'$ remains integer, while the second equality insures that $c'$ is positive.
An elementary proof of this statement can be found in~\cite[p.46]{Aig}.

\subsection{The Markov tree}

The Markov numbers can be organized with the help of an infinite binary tree,
sometimes called the Conway topograph; see~\cite{BV}. 
The edges of the  infinite binary tree have an orientation so that at each vertex there are one ingoing and two outgoing arrows.
In our illustrations some edges are coloured in green indicating that there is a choice for their orientation but the choice does not affect the upper part of the tree, 
see Section \ref{Mirror} for further discussions.
\begin{figure}[H]
\begin{center}
\begin{tikzpicture}[scale=0.7, every node/.style={scale=0.75}]
    \node at (1.75,1.25) {$2$};
    \node at (-1.75,1.25){$1$};
    \node at (0,4) {$5$};
    \node at (3.11, 3.8) {$29$};
    \node at (2.20, 5.76) {$433$}; 
    \node at (4.37, 1.98) {$169$}; 
    \node at (0.7, 6.3) {$6466$}; 
    \node at (3.8, 5.85) {$37666$}; 
    \node at (5.25, 3.35) {$14701$}; 
    \node at (4.2, 0.5) {$985$}; 
    \node at (-3.11, 3.8) {$13$};
    \node at (-2.20, 5.76) {$194$}; 
    \node at (-4.37, 1.98) {$34$}; 
    \node at (-0.7, 6.3) {$2897$}; 
    \node at (-3.8, 5.85) {$7561$}; 
    \node at (-5.25, 3.35) {$1325$}; 
    \node at (-4.2, 0.5) {$89$}; 
    
        \node at (0,-1){$1$};
            \node at (-2.125,-1.125){$1$};

    \draw[thick, ->, >=stealth] (0, 0.5) -- (0, 2);
    
        \draw[thick,  ->, >=stealth] (-1.5, -0.5)-- (0, 0.5);
        
           \draw[thick,  ->, >=stealth] (0, 0.5) -- (1.5, -0.5);
           
          \draw[thick, green] (-1.5,-0.5) -- (-2.75, -0.5);
          
             \draw[thick, green] (-1.5,-0.5) -- (-1.5, -1.75);

    \draw[thick,  ->, >=stealth] (0,2) -- (1.73,3);
        
        \draw[thick,  ->, >=stealth] (1.73, 3) -- (1.99, 4.48);	

            \draw[thick,  ->, >=stealth] (1.99, 4.48) -- (1.31, 5.53);
            
                \draw[thick,  ->, >=stealth] (1.31, 5.53) -- (0.36, 5.87);
                \draw[thick,  ->, >=stealth] (1.31, 5.53) --  (1.23, 6.53);
                
            \draw[thick,  ->, >=stealth] (1.99, 4.48) -- (2.98, 5.24);	
            
                \draw[thick,  ->, >=stealth] (2.98, 5.24) -- (3.24, 6.21);
                \draw[thick,  ->, >=stealth] (2.98, 5.24) -- (3.98, 5.24);

        \draw[thick,  ->, >=stealth] (1.73, 3) -- (3.14, 2.49);

            \draw[thick,  ->, >=stealth] (3.14, 2.49) -- (4.29, 2.97);

                \draw[thick,  ->, >=stealth] (4.29, 2.97) -- (4.79, 3.84);
                \draw[thick,  ->, >=stealth] (4.29, 2.97) -- (5.26, 2.71);
            
            \draw[thick,  ->, >=stealth] (3.14, 2.49) -- (3.72, 1.38);
                
                \draw[thick,  ->, >=stealth] (3.72, 1.38) -- (4.63, 0.96);
                \draw[thick,  ->, >=stealth] (3.72, 1.38) -- (3.55, 0.39);
                
    \draw[thick,  ->, >=stealth] (0,2) -- (-1.73,3);
    
        \draw[thick,  ->, >=stealth] (-1.73, 3) -- (-1.99, 4.48);	
    
            \draw[thick,  ->, >=stealth] (-1.99, 4.48) -- (-1.31, 5.53);
    
                \draw[thick,  ->, >=stealth] (-1.31, 5.53) -- (-0.36, 5.87);
                \draw[thick,  ->, >=stealth] (-1.31, 5.53) -- (-1.23, 6.53);
    
            \draw[thick,  ->, >=stealth] (-1.99, 4.48) -- (-2.98, 5.24);	
    
                \draw[thick,  ->, >=stealth] (-2.98, 5.24) -- (-3.24, 6.21);
                \draw[thick,  ->, >=stealth] (-2.98, 5.24) -- (-3.98, 5.24);
    
        \draw[thick,  ->, >=stealth] (-1.73, 3) -- (-3.14, 2.49);
    
            \draw[thick,  ->, >=stealth] (-3.14, 2.49) -- (-4.29, 2.97);
    
                \draw[thick,  ->, >=stealth] (-4.29, 2.97) -- (-4.79, 3.84);
                \draw[thick,  ->, >=stealth] (-4.29, 2.97) -- (-5.26, 2.71);
    
            \draw[thick,  ->, >=stealth] (-3.14, 2.49) -- (-3.72, 1.38);
    
                \draw[thick,  ->, >=stealth] (-3.72, 1.38) -- (-4.63, 0.96);
                \draw[thick,  ->, >=stealth] (-3.72, 1.38) -- (-3.55, 0.39);

\end{tikzpicture}
\caption{Markov numbers represented on the Conway topograph.}
\label{fig:Markov Conway}
\end{center}
\end{figure}
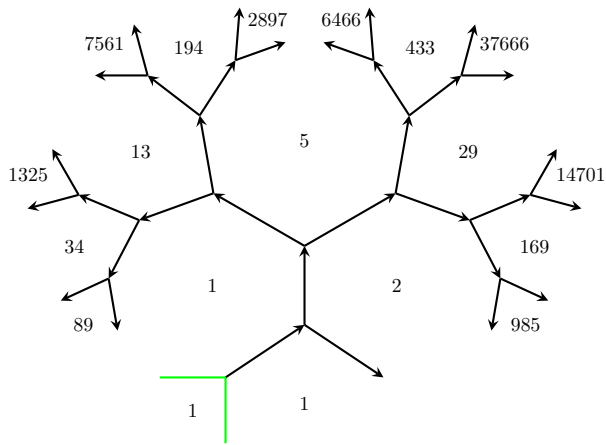

The tree is drawn in the plane cutting it into infinitely many regions,
and every region is labelled by a Markov number.
Three numbers around a vertex constitute
a Markov triple,
and the transformations~\eqref{MutEq} correspond to the following ``fishbone branchings'':
\begin{figure}[H]
\begin{center}
\begin{tikzpicture}[scale=0.8]
 \draw[thick,  ->, >=stealth] (0,0.2) -- (1,1);	
  \draw[thick,  ->, >=stealth] (1,1)-- (2,0.2);	
    \draw[thick,  ->, >=stealth] (1,1)-- (1,2.2);	
       \draw[thick,  ->, >=stealth] (1,2.2)-- (0,3);	
          \draw[thick,  ->, >=stealth] (1,2.2)-- (2,3);
      \node at (1,0.3) {$c$}; 
         \node at (0.4, 1.6) {$a$}; 
          \node at (1.6, 1.6) {$b$}; 
          \node at (1, 2.9) {$c'$}; 
\end{tikzpicture}
\caption{The fishbone}
\label{fig:fishMarkoff}
\end{center}
\end{figure}
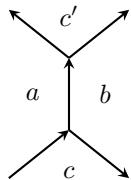

The tree of Markov numbers can be compared with the yet more ancient and better known Farey tree;
see, e.g.~\cite{HW,Aig,Reu}.

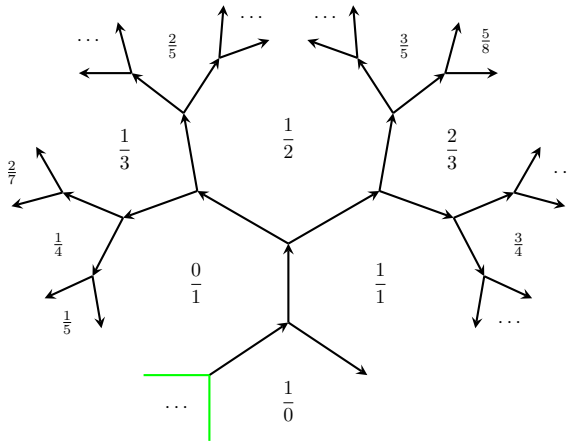
\begin{figure}[H]
\begin{center}
\begin{tikzpicture}[scale=0.7, every node/.style={scale=0.75}]
    \node at (1.75,1.25) {$\dfrac11$};
    \node at (-1.75,1.25){$\dfrac01$};
    \node at (0,4) {$\dfrac12$};
    \node at (3.11, 3.8) {$\dfrac23$};
    \node at  (2.20, 5.76) {$\frac35$}; 
    \node at (4.37, 1.98) {$\frac34$}; 
    \node at (0.7, 6.3) {$\cdots$}; 
   \node at (3.75, 5.85)  {$\frac58$}; 
    \node at (5.25, 3.35) {$\cdots$}; 
    \node at (4.2, 0.5) {$\cdots$}; 
    \node at (-3.11, 3.8) {$\dfrac13$};
    \node at (-2.20, 5.76) {$\frac25$}; 
    \node at (-4.37, 1.98) {$\frac14$}; 
    \node at (-0.7, 6.3) {$\cdots$}; 
    \node at (-3.8, 5.85) {$\cdots$}; 
    \node at (-5.25, 3.35)  {$\frac27$}; 
    \node at (-4.2, 0.5) {$\frac15$}; 
    
        \node at (0,-1){$\dfrac10$};
            \node at (-2.125,-1.125){$\cdots$};

     \draw[thick, ->, >=stealth] (0, 0.5) -- (0, 2);
    
        \draw[thick,  ->, >=stealth] (-1.5, -0.5)-- (0, 0.5);
        
           \draw[thick,  ->, >=stealth] (0, 0.5) -- (1.5, -0.5);
           
          \draw[thick, green] (-1.5,-0.5) -- (-2.75, -0.5);
          
             \draw[thick, green] (-1.5,-0.5) -- (-1.5, -1.75);

    \draw[thick,  ->, >=stealth] (0,2) -- (1.73,3);
        
        \draw[thick,  ->, >=stealth] (1.73, 3) -- (1.99, 4.48);	

            \draw[thick,  ->, >=stealth] (1.99, 4.48) -- (1.31, 5.53);
            
                \draw[thick,  ->, >=stealth] (1.31, 5.53) -- (0.36, 5.87);
                \draw[thick,  ->, >=stealth] (1.31, 5.53) --  (1.23, 6.53);
                
            \draw[thick,  ->, >=stealth] (1.99, 4.48) -- (2.98, 5.24);	
            
                \draw[thick,  ->, >=stealth] (2.98, 5.24) -- (3.24, 6.21);
                \draw[thick,  ->, >=stealth] (2.98, 5.24) -- (3.98, 5.24);

        \draw[thick,  ->, >=stealth] (1.73, 3) -- (3.14, 2.49);

            \draw[thick,  ->, >=stealth] (3.14, 2.49) -- (4.29, 2.97);

                \draw[thick,  ->, >=stealth] (4.29, 2.97) -- (4.79, 3.84);
                \draw[thick,  ->, >=stealth] (4.29, 2.97) -- (5.26, 2.71);
            
            \draw[thick,  ->, >=stealth] (3.14, 2.49) -- (3.72, 1.38);
                
                \draw[thick,  ->, >=stealth] (3.72, 1.38) -- (4.63, 0.96);
                \draw[thick,  ->, >=stealth] (3.72, 1.38) -- (3.55, 0.39);
                
    \draw[thick,  ->, >=stealth] (0,2) -- (-1.73,3);
    
        \draw[thick,  ->, >=stealth] (-1.73, 3) -- (-1.99, 4.48);	
    
            \draw[thick,  ->, >=stealth] (-1.99, 4.48) -- (-1.31, 5.53);
    
                \draw[thick,  ->, >=stealth] (-1.31, 5.53) -- (-0.36, 5.87);
                \draw[thick,  ->, >=stealth] (-1.31, 5.53) -- (-1.23, 6.53);
    
            \draw[thick,  ->, >=stealth] (-1.99, 4.48) -- (-2.98, 5.24);	
    
                \draw[thick,  ->, >=stealth] (-2.98, 5.24) -- (-3.24, 6.21);
                \draw[thick,  ->, >=stealth] (-2.98, 5.24) -- (-3.98, 5.24);
    
        \draw[thick,  ->, >=stealth] (-1.73, 3) -- (-3.14, 2.49);
    
            \draw[thick,  ->, >=stealth] (-3.14, 2.49) -- (-4.29, 2.97);
    
                \draw[thick,  ->, >=stealth] (-4.29, 2.97) -- (-4.79, 3.84);
                \draw[thick,  ->, >=stealth] (-4.29, 2.97) -- (-5.26, 2.71);
    
            \draw[thick,  ->, >=stealth] (-3.14, 2.49) -- (-3.72, 1.38);
    
                \draw[thick,  ->, >=stealth] (-3.72, 1.38) -- (-4.63, 0.96);
                \draw[thick,  ->, >=stealth] (-3.72, 1.38) -- (-3.55, 0.39);
\end{tikzpicture}
\caption{The Farey tree of rational numbers.}
\label{fig:Rational tree}
\end{center}
\end{figure}

Rational numbers label regions bounded by the binary tree, and obey the following local rule:
every two rationals, $\frac{r}{s}$ and $\frac{r'}{s'}$, separated by an edge produce a third rational
$$
\frac{r}{s}\oplus\frac{r'}{s'}:=
\frac{r+r'}{s+s'}
$$
called the mediant, or the Farey sum of $\frac{r}{s}$ and $\frac{r'}{s'}$.
\begin{figure}[H]
\begin{center}
\begin{tikzpicture}[scale=0.8]
    \draw[thick,  ->, >=stealth] (1,1)-- (1,2.2);	
       \draw[thick,  ->, >=stealth] (1,2.2)-- (0,3);	
          \draw[thick,  ->, >=stealth] (1,2.2)-- (2,3);
         \node at (0.4, 1.6) {$\frac{r}{s}$}; 
          \node at (1.6, 1.6) {$\frac{r'}{s'}$}; 
          \node at (1, 2.9) {$\frac{r+r'}{s+s'}$}; 
\end{tikzpicture}
\caption{The Farey sum.}
\label{fig:FareyF}
\end{center}
\end{figure}

It is known that
the Farey tree contains all rational numbers $\Q\cup\left\{\frac10\right\}$,
and every rational appears exactly once.
Identifying the trees, one then equips every Markov number with a rational label.
In other words, one has a map
$$
t\in\Q_{[0,1]}\mapsto m^t
$$
widely believed to be injective,
although this is the Markov uniqueness conjecture; see~\cite{Aig}.

\subsection{Cohn matrices}\label{CohnM}

There exists an amazing way of computing Markov numbers using $2\times2$ unimodular integer matrices.
It was discovered by Harvey Cohn~\cite{Coh1}.
An element $C\in\SL(2,\Z)$
$$
C=\begin{pmatrix}
a&b\\[2pt]
c&d
\end{pmatrix},
\qquad a,b,c,d\in\Z,
\quad
ad-bc=1
$$
 is called a {\it Cohn matrix} if 
 $b$ is a Markov number and $\Tr(C)=a+d=3b$.

A classification of Cohn matrices is given in~\cite[Thm. 4.8]{Aig},
all such matrices are parametrized by one integer parameter $n\in\Z$
and can be obtained by multiplying two initial matrices~$A(n)$ and $B(n)$
\begin{equation}
\label{Cohn}
A(n)=
\begin{pmatrix}
n&1\\[2pt]
3n-n^2 -1&3-n
\end{pmatrix},
\qquad\qquad
B(n)=
\begin{pmatrix}
2n+1&2\\[2pt]
-2n^2 +4n+2&5-2n
\end{pmatrix}.
\end{equation}
Observe that 
$$
B(n)=A(n)A(n+1).
$$
The construction is again based on a binary tree:
\begin{figure}[H]
\begin{center}
\begin{tikzpicture}[scale=0.7, every node/.style={scale=0.75}]
    \node at (1.75,1.25) {$B$};
    \node at (-1.75,1.25){$A$};
    \node at (0,4) {$AB$};
    \node at (3.11, 3.8) {$AB^2$};
    \draw (2.20, 5.76) node[scale=1] {$ABAB^2$}; 
    \node at (4.37, 1.98) {$AB^3$}; 
    \node at (0.7, 6.3) {$\cdots$}; 
   \draw (4.5, 5.85) node[scale=1] {$ABAB^2AB^2$}; 
    \node at (5.4, 3.35) {$\cdots$}; 
    \node at (4.2, 0.5) {$\cdots$}; 
    \node at (-3.11, 3.8) {$A^2B$};
    \node at (-2.20, 5.76) {$A^2BAB$}; 
    \node at (-4.37, 1.98) {$A^3B$}; 
    \node at (-0.7, 6.3) {$\cdots$}; 
    \node at (-3.8, 5.85) {$\cdots$}; 
    \draw (-5.4, 3.35) node[scale=1]  {$A^3BA^2B$}; 
    \node at (-4.2, 0.5) {$A^4B$}; 
    
        \node at (0,-1){$A^{-1}B$};
            \node at (-2.125,-1.125){$\cdots$};

     \draw[thick, ->, >=stealth] (0, 0.5) -- (0, 2);
    
        \draw[thick,  ->, >=stealth] (-1.5, -0.5)-- (0, 0.5);
        
           \draw[thick,  ->, >=stealth] (0, 0.5) -- (1.5, -0.5);
           
          \draw[thick, green] (-1.5,-0.5) -- (-2.75, -0.5);
          
             \draw[thick, green] (-1.5,-0.5) -- (-1.5, -1.75);

    \draw[thick,  ->, >=stealth] (0,2) -- (1.73,3);
        
        \draw[thick,  ->, >=stealth] (1.73, 3) -- (1.99, 4.48);	

            \draw[thick,  ->, >=stealth] (1.99, 4.48) -- (1.31, 5.53);
            
                \draw[thick,  ->, >=stealth] (1.31, 5.53) -- (0.36, 5.87);
                \draw[thick,  ->, >=stealth] (1.31, 5.53) --  (1.23, 6.53);
                
            \draw[thick,  ->, >=stealth] (1.99, 4.48) -- (2.98, 5.24);	
            
                \draw[thick,  ->, >=stealth] (2.98, 5.24) -- (3.24, 6.21);
                \draw[thick,  ->, >=stealth] (2.98, 5.24) -- (3.98, 5.24);

        \draw[thick,  ->, >=stealth] (1.73, 3) -- (3.14, 2.49);

            \draw[thick,  ->, >=stealth] (3.14, 2.49) -- (4.29, 2.97);

                \draw[thick,  ->, >=stealth] (4.29, 2.97) -- (4.79, 3.84);
                \draw[thick,  ->, >=stealth] (4.29, 2.97) -- (5.26, 2.71);
            
            \draw[thick,  ->, >=stealth] (3.14, 2.49) -- (3.72, 1.38);
                
                \draw[thick,  ->, >=stealth] (3.72, 1.38) -- (4.63, 0.96);
                \draw[thick,  ->, >=stealth] (3.72, 1.38) -- (3.55, 0.39);
                
    \draw[thick,  ->, >=stealth] (0,2) -- (-1.73,3);
    
        \draw[thick,  ->, >=stealth] (-1.73, 3) -- (-1.99, 4.48);	
    
            \draw[thick,  ->, >=stealth] (-1.99, 4.48) -- (-1.31, 5.53);
    
                \draw[thick,  ->, >=stealth] (-1.31, 5.53) -- (-0.36, 5.87);
                \draw[thick,  ->, >=stealth] (-1.31, 5.53) -- (-1.23, 6.53);
    
            \draw[thick,  ->, >=stealth] (-1.99, 4.48) -- (-2.98, 5.24);	
    
                \draw[thick,  ->, >=stealth] (-2.98, 5.24) -- (-3.24, 6.21);
                \draw[thick,  ->, >=stealth] (-2.98, 5.24) -- (-3.98, 5.24);
    
        \draw[thick,  ->, >=stealth] (-1.73, 3) -- (-3.14, 2.49);
    
            \draw[thick,  ->, >=stealth] (-3.14, 2.49) -- (-4.29, 2.97);
    
                \draw[thick,  ->, >=stealth] (-4.29, 2.97) -- (-4.79, 3.84);
                \draw[thick,  ->, >=stealth] (-4.29, 2.97) -- (-5.26, 2.71);
    
            \draw[thick,  ->, >=stealth] (-3.14, 2.49) -- (-3.72, 1.38);
    
                \draw[thick,  ->, >=stealth] (-3.72, 1.38) -- (-4.63, 0.96);
                \draw[thick,  ->, >=stealth] (-3.72, 1.38) -- (-3.55, 0.39);
\end{tikzpicture}
\caption{The Cohn tree.}
\label{fig:Christoffel tree AB}
\end{center}
\end{figure}

The following statement is due to Harvey Cohn~\cite{Coh1},
its elegant proof can be found in~\cite{Aig}.

Once again, identifying the binary trees, every matrix in the Cohn tree can be labelled by a rational number.
The notation due to Aigner is $C_t$.
The tree is determined by $C_{\frac01}=A(n)$ and $C_{\frac11}=B(n)$, for some fixed $n$, that are called \textit{initial Cohn matrices}.

\begin{thm}[\cite{Coh1,Aig}]
\label{CohnThm}
Every matrix $C_t$ appearing in this tree is a Cohn matrix,
its upper right element is precisely the corresponding Markov number~$m^t$,
while its trace equals~$3m^t$.
\end{thm}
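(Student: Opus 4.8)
The plan is to prove Theorem~\ref{CohnThm} by induction on the Stern--Brocot / Farey tree, exploiting the fact that the Cohn tree, the Markov tree, and the Farey tree are all governed by the same combinatorial branching rule. First I would set up the induction: the base cases are the two initial matrices $C_{\frac01}=A(n)$ and $C_{\frac11}=B(n)$, for which one checks directly from~\eqref{Cohn} that the upper right entries are $1=m^{\frac01}$ and $2=m^{\frac11}$, that the traces are $3=3m^{\frac01}$ and $6=3m^{\frac11}$, and that $\det A(n)=\det B(n)=1$. (One also records $B(n)=A(n)A(n+1)$, which is the compatibility that makes the whole tree consistent.) The inductive step is: whenever three regions meeting at a vertex of the tree carry matrices $C_t$, $C_{t'}$, and the newly produced $C_{t\oplus t'}$ corresponding to the Farey mediant $t\oplus t'=\frac{r+r'}{s+s'}$, the tree rule assigns $C_{t\oplus t'}=C_tC_{t'}$ (in the correct order dictated by the left/right position in the fishbone). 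So it suffices to show that the property ``$C$ is a Cohn matrix with upper right entry $m$ and trace $3m$'' is preserved under this product, provided the three Markov numbers $m^t$, $m^{t'}$, $m^{t\oplus t'}$ around the vertex form a Markov triple with $m^{t\oplus t'}=3m^tm^{t'}-m^{\mathrm{opp}}$ as in~\eqref{MutEq}.

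Next I would carry out the key algebraic computation at a vertex. Write $C_t=\begin{pmatrix}a&m\\ c&d\end{pmatrix}$ and $C_{t'}=\begin{pmatrix}a'&m'\\ c'&d'\end{pmatrix}$, both in $\SL(2,\Z)$ with $a+d=3m$ and $a'+d'=3m'$. Then $\det(C_tC_{t'})=1$ automatically, and the upper right entry of the product $C_tC_{t'}$ is $am'+md'$; the goal is to identify this with the mediant Markov number. For the trace, $\Tr(C_tC_{t'})=aa'+mc'+ca'+dd'$, and one wants this to equal $3(am'+md')$. The cleanest route is to use the identity $\Tr(XY)+\Tr(XY^{-1})=\Tr(X)\Tr(Y)$ valid for $X,Y\in\SL(2)$: applying it with $X=C_t$, $Y=C_{t'}$ gives $\Tr(C_tC_{t'})=\Tr(C_t)\Tr(C_{t'})-\Tr(C_tC_{t'}^{-1})=9mm'-\Tr(C_tC_{t'}^{-1})$, and one recognizes $C_tC_{t'}^{-1}$ (up to a neighboring relation in the tree) as the Cohn matrix of the ``opposite'' region, with trace $3m^{\mathrm{opp}}$, so $\Tr(C_tC_{t'})=9mm'-3m^{\mathrm{opp}}=3(3mm'-m^{\mathrm{opp}})=3m^{t\oplus t'}$ by~\eqref{MutEq}. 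A parallel but slightly more delicate bookkeeping identity handles the upper right entry: one shows $am'+md'=3mm'-(\text{upper right of }C_tC_{t'}^{-1})$, and by induction the latter is $m^{\mathrm{opp}}$, giving $am'+md'=m^{t\oplus t'}$ again by~\eqref{MutEq}. Thus both invariants propagate, and since every matrix in the tree is reached from the two initial ones by finitely many such products, the statement follows for all $t\in\Q_{[0,1]}$.

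The main obstacle, and the place that needs the most care, is the bookkeeping of orientations and orderings: which matrix is the left factor versus the right factor at a given vertex, and exactly which triple of regions plays the role of ``the three around the vertex'' versus ``the opposite region'' in the fishbone of Figure~\ref{fig:fishMarkoff}. Getting the product order wrong breaks the trace identity, because $\Tr(XY)=\Tr(YX)$ saves the trace but the upper right entry is not symmetric in $X,Y$. I would fix this once and for all by aligning the Cohn tree, the Markov tree, and the Farey tree via the common binary-tree indexing introduced above (the green-edge convention ensures the choice is immaterial for the upper part), and then verify the orientation convention on the very first branching $A,B\mapsto AB$ — i.e.\ check that $AB$ indeed has upper right entry $5$ and trace $15$ — which pins down the convention for the whole tree. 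A secondary, purely computational point is verifying the auxiliary identity expressing $\Tr(C_tC_{t'}^{-1})$ and the upper right entry of $C_tC_{t'}^{-1}$ in terms of the neighboring region; this is a two-by-two matrix manipulation using $C_{t'}^{-1}=\begin{pmatrix}d'&-m'\\ -c'&a'\end{pmatrix}$ and the relation $C_{t'}=C_{t''}C_{t}$ or $C_{t}C_{t''}$ for the appropriate neighbor $t''$, and I would relegate it to a short lemma rather than expand it inline. Everything else — the $\SL(2,\Z)$ condition, the base cases, and the reduction to~\eqref{MutEq} — is routine once the indexing is in place.
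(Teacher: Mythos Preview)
The paper does not include its own proof of Theorem~\ref{CohnThm}; it attributes the result to Cohn and refers the reader to Aigner's book for ``its elegant proof.'' That said, your inductive strategy---base cases $A(n),B(n)$, then propagate along the tree via the $\SL_2$ trace identity $\Tr(XY)=\Tr(X)\Tr(Y)-\Tr(XY^{-1})$---is exactly the argument the paper itself deploys for the $q$-deformed analogue, Theorem~\ref{CohnShiftThm} (see equation~\eqref{TraceEq} and Figure~\ref{fig:fishABC}), so your trace argument is on solid ground and in line with the paper's methodology.

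Your treatment of the upper right entry via the companion identity $(\text{u.r.\ of }MN)+(\text{u.r.\ of }MN^{-1})=3mm'$ is also correct: with $M=\begin{pmatrix}a&m\\ c&d\end{pmatrix}$, $N=\begin{pmatrix}a'&m'\\ c'&d'\end{pmatrix}$ one has $(am'+md')+(ma'-am')=m(a'+d')=3mm'$. A small reassurance on your ``main obstacle'': the upper right entries of $MN^{-1}$ and $M^{-1}N$ in fact coincide (both equal $ma'-am'=dm'-md'$, using $a+d=3m$, $a'+d'=3m'$), as do their traces, so the orientation ambiguity you flag is harmless for this particular induction---you need not resolve it to push the upper right entry through. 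This parallels the paper's own inductive proof of Lemma~\ref{Trace_coeffs_relation}, which propagates a linear relation among entries along the tree by the same mechanism.
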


The most common choice of the Cohn matrices~\eqref{Cohn} corresponds to
$n=0,1$, or $2$:
\begin{eqnarray*}
A(0)=
\begin{pmatrix}
0&1\\[2pt]
-1&3
\end{pmatrix},
&
B(0)
=
\begin{pmatrix}
1&2\\[2pt]
2& 5
\end{pmatrix},
\\[10pt]
A(1)=
\begin{pmatrix}
1&1\\[2pt]
1&2
\end{pmatrix},
&
B(1)
=
\begin{pmatrix}
  3 & 2 \\[2pt]
4 & 3
\end{pmatrix},
\\[10pt]
A(2)=
\begin{pmatrix}
2&1\\[2pt]
1&1
\end{pmatrix},
&
B(2)
=
 \begin{pmatrix}
 5& 2 \\[2pt]
2 & 1
\end{pmatrix}.
\end{eqnarray*}

\begin{rem}
Markov numbers appear simultaneously as traces and as upper right elements of Cohn matrices.
However, the notion of trace of a matrix is much more fundamental than its particular entry.
This obvious reasoning becomes more important in the $q$-deformed case.
We will see that the trace of $q$-deformed Cohn matrices are independent of their choice, 
while the upper right elements are different.
\end{rem}

\subsection{Snake graphs}

A {\it snake graph} is a particular connected graph in the plane
that has only vertical and horizontal edges.
It can be obtained by gluing elementary square boxes along edges.
A new box is attached to the previous one at the right or at the top.

Snake graphs related to Markoff numbers appear in Harvey Cohn's paper~\cite[Fig 1.]{Coh} and in Propp's paper \cite{Pro}.
Choosing a rational number $t=\frac{k}{s}\in[0,1]$, one draws in the plane a line of slope $t $ and one picks all of the $1\times1$ squares 
in the $\Z^2$-grid crossed by the diagonal of the $k\times s$ rectangle.
The resulting snake graph is called in~\cite{Coh} the ``polygonal approximation'' of the diagonal of the $k\times s$ rectangular.
We reproduce here (see Fig.~\ref{fig:snakes} left) a somewhat transposed figure from Cohn's paper
(see Fig. 1 in~\cite{Coh}) for $t=\frac35$.

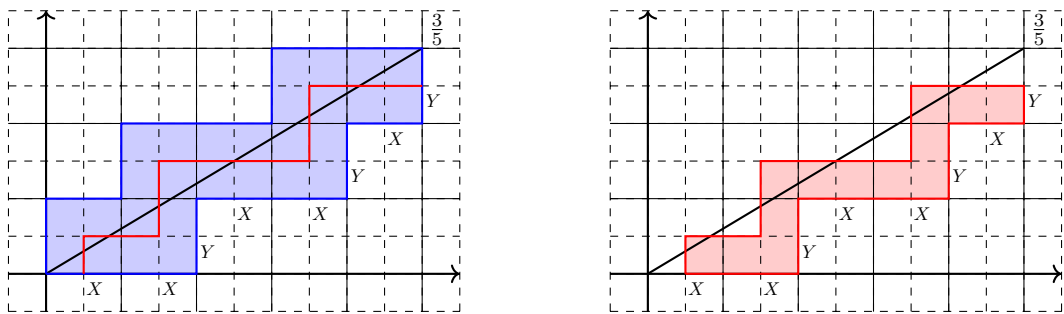
\begin{figure}[H]
\begin{center}
\begin{tikzpicture}[scale=1]
  \draw[color=blue!20, fill=blue!20] (1,1) rectangle (3,2);
    \draw[color=blue!20, fill=blue!20] (2,2) rectangle (5,3);
      \draw[color=blue!20, fill=blue!20] (4,3) rectangle (6,4);
\draw[step=1.0,black] (0.5,0.5) grid (6.5,4.5);
\draw[step=0.5,black, dashed] (0.5,0.5) grid (6.5,4.5);
\draw[thick, ->] (1,0.5) -- (1,4.5);
\draw[thick, ->] (0.5,1) -- (6.5,1);
\draw[thick ] (1,1) -- (6,4);
\draw[thick, blue ] (1,1) -- (1,2) -- (2,2) -- (2,3) -- (4, 3) -- (4,4) -- (6,4) -- (6,3) -- (5,3) -- (5,2) -- (3,2) -- (3,1) -- (1,1);
\draw[thick, red ] (1.5,1) -- (1.5, 1.5) -- (2.5,1.5) -- (2.5,2.5) -- (4.5, 2.5) -- (4.5,3.5) -- (6,3.5) ;
      \node at (6.2, 4.25){$\frac35$};
      \draw (1.65, 0.8) node[scale=0.7] {$X$}; 
        \draw (2.65, 0.8) node[scale=0.7] {$X$}; 
          \draw (3.65, 1.8) node[scale=0.7] {$X$}; 
            \draw (4.65, 1.8) node[scale=0.7] {$X$}; 
              \draw (5.65, 2.8) node[scale=0.7] {$X$}; 
      \draw (3.15,1.3) node[scale=0.7] {$Y$}; 
      \draw (5.15,2.3) node[scale=0.7] {$Y$}; 
	\draw (6.15,3.3) node[scale=0.7] {$Y$}; 

    \begin{scope}[shift={(8,0)}]
     \draw[color=red!20, fill=red!20] (4.5,2.5) rectangle (5,3);
       \draw[color=red!20, fill=red!20] (2.5,1.5) rectangle (3,2);
       \draw[color=red!20, fill=red!20] (4.5,3) rectangle (6,3.5);
      \draw[color=red!20, fill=red!20] (2.5,2) rectangle (5, 2.5);
        \draw[color=red!20, fill=red!20] (1.5,1) rectangle (3,1.5);
    \draw[step=1.0,black] (0.5,0.5) grid (6.5,4.5);
\draw[step=0.5,black, dashed] (0.5,0.5) grid (6.5,4.5);
\draw[thick, ->] (1,0.5) -- (1,4.5);
\draw[thick, ->] (0.5,1) -- (6.5,1);
\draw[thick ] (1,1) -- (6,4);
\draw[thick, red ] (1.5,1) -- (1.5, 1.5) -- (2.5,1.5) -- (2.5,2.5) -- (4.5, 2.5) -- (4.5,3.5) -- (6,3.5) -- (6,3) -- (5,3) -- (5,2) -- (3,2) -- (3,1) -- (1.5,1);
     \node at (6.2, 4.25){$\frac35$};
      \draw (1.65, 0.8) node[scale=0.7] {$X$}; 
        \draw (2.65, 0.8) node[scale=0.7] {$X$}; 
          \draw (3.65, 1.8) node[scale=0.7] {$X$}; 
            \draw (4.65, 1.8) node[scale=0.7] {$X$}; 
              \draw (5.65, 2.8) node[scale=0.7] {$X$}; 
      \draw (3.15,1.3) node[scale=0.7] {$Y$}; 
      \draw (5.15,2.3) node[scale=0.7] {$Y$}; 
	\draw (6.15,3.3) node[scale=0.7] {$Y$}; 
    \end{scope}
\end{tikzpicture}
\caption{Snake graphs. The Cohn snake (left) {\it vs} the domino snake $\mathcal{G}_{t}$ (right).}
\label{fig:snakes}
\end{center}
\end{figure}

The construction of the snake graph that will be relevant for us refines that of Cohn.
Tracing the ``median zig-zag line'' joining the point $(\half,0)$ and $(s,k-\half)$
(see the red line on Fig.~\ref{fig:snakes}, left) and collecting the $\half\times\half$ square boxes under the line,
one obtains the snake graph that we denote by~$\mathcal{G}_{t}$.
Note that $\mathcal{G}_{t}$ consists of $\half \times \half$ boxes  and thus contains more boxes 
than Cohn's polygonal approximation.
The snake graph $\G_{\frac35}$ in Fig.~\ref{fig:snakes} (right) is precisely that of our 
previous example depicted in Fig.~\ref{SnakeFig1}.
The snake graph $\G_t$ is called the ``domino graph'' in~\cite{Aig}.

Let us mention that snake graphs are useful in several branches of combinatorics connected to continued fractions,
cluster algebras, and many other topics; see~\cite{MSW,CR} and references therein.
A systematic study of arbitrary snake graphs was initiated by Canakci and Schiffler in~\cite{CS}.
For a survey; see~\cite{Sch}.

Recall that a {\it perfect matching} of a graph is a subset of its edges
such that every vertex of the graph is incident to exactly one edge of the matching.
Perfect matchings in simplest snake graphs
are collected in Fig.~\ref{PerFig} .

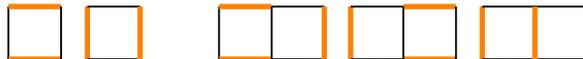
\begin{figure}[H]
	\centering
	\begin{tikzpicture}[scale=0.70]
		
		\draw[line width=0.7pt] (0,0)--(0,1);
		\draw[line width=2pt,orange] (0,0)-- (1,0);
		\draw[line width=2pt,orange] (0,1)-- (1,1);
		\draw[line width=0.7pt] (1,0)-- (1,1);
		
		\draw[line width=2pt,orange] (1.5,0)--(1.5,1);
		\draw[line width=0.7pt] (1.5,0)-- (2.5,0);
		\draw[line width=0.7pt] (1.5,1)-- (2.5,1);
		\draw[line width=2pt,orange] (2.5,0)-- (2.5,1);
		
		\draw[line width=0.7pt] (4,0)--(4,1);
		\draw[line width=02pt,orange] (4,1)--(5,1);
		\draw[line width=02pt,orange] (4,0)--(5,0);
		\draw[line width=0.7pt] (5,0)--(5,1);
		\draw[line width=0.7pt] (5,1)--(6,1);
		\draw[line width=0.7pt] (5,0)--(6,0);
		\draw[line width=02pt,orange] (6,0)--(6,1);

		\draw[line width=02pt,orange] (6.5,0)--(6.5,1);
		\draw[line width=0.7pt] (6.5,1)--(7.5,1);
		\draw[line width=0.7pt] (6.5,0)--(7.5,0);
		\draw[line width=02pt,orange] (7.5,0)--(8.5,0);
		\draw[line width=02pt,orange] (7.5,1)--(8.5,1);
		\draw[line width=0.7pt] (7.5,0)--(7.5,1);
		\draw[line width=0.7pt] (8.5,0)--(8.5,1);

		\draw[line width=02pt,orange] (9,0)--(9,1);
		\draw[line width=0.7pt] (9,1)--(10,1);
		\draw[line width=0.7pt] (9,0)--(10,0);
		\draw[line width=0.7pt] (10,0)--(11,0);
		\draw[line width=0.7pt] (10,1)--(11,1);
		\draw[line width=02pt,orange] (10,0)--(10,1);
		\draw[line width=02pt,orange] (11,0)--(11,1);

			\end{tikzpicture}
\caption{Examples of perfect matchings. There are two perfect matchings for a single box and three for a two-box snake.}
\label{PerFig}
\end{figure}

The following statement is a well-known fundamental result explaining the combinatorial nature of Markov numbers.

\begin{thm}
\label{ProffThm}
Every Markov number $m^t$ is equal to the number of perfect matchings in~$\mathcal{G}_t$.
\end{thm}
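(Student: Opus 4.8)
\textbf{Proof plan for Theorem~\ref{ProffThm}.}
The plan is to set up a bijection between perfect matchings of the snake graph $\mathcal{G}_t$ and the objects counted recursively by the Markov tree, matching the binary-tree structure on both sides. First I would recall the standard transfer-matrix / continuant formalism for counting perfect matchings of snake graphs: for a snake graph built by successively attaching $\half\times\half$ boxes to the right or to the top, the number of perfect matchings $N(\mathcal{G})$ satisfies a three-term recurrence $N_k = c_k N_{k-1} + N_{k-2}$, where the $c_k$ encode the turning pattern of the staircase (equivalently, $N(\mathcal{G}_t)$ is the numerator of a continued fraction determined by the continued-fraction expansion of $t$). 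This is exactly the content surveyed in~\cite{CS,Sch}; I would invoke it as a known tool rather than re-derive it.

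Next I would carry out the comparison with the Markov recurrence. The Markov tree (Figure~\ref{fig:Markov Conway}) produces, along any path, triples $(a,b,c)$ transforming by $c' = 3ab - c$, and by Theorem~\ref{CohnThm} the Markov number $m^t$ is the upper-right entry of the Cohn matrix $C_t = $ (product of $A$'s and $B$'s dictated by the Farey/Stern--Brocot path to $t$). The key step is to show that the snake graph $\mathcal{G}_t$ decomposes, according to the same path, as a concatenation of the two elementary sub-snakes corresponding to $A$ and $B$, and that the matching-counting transfer matrices for these two building blocks are (conjugate to) $A(n)$ and $B(n)$ for the appropriate $n$. Concretely: attaching the sub-snake for $A$ multiplies the relevant $2\times2$ "state vector" of matching counts by a matrix with the same pattern as $A(n)$, similarly for $B$; since matrix multiplication matches concatenation of snake graphs, $N(\mathcal{G}_t)$ is read off as the corresponding entry of the product, i.e.\ $N(\mathcal{G}_t) = m^t$. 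To anchor the induction I would check the base cases explicitly — the one-box and two-box snakes of Figure~\ref{PerFig} give $2$ and $3$ matchings, agreeing with $m^{0/1}, m^{1/1}, m^{1/2} \in \{1,2,5\}$ after accounting for the $\half\times\half$ refinement (each "Cohn box" becomes a $2\times 2$ block of small boxes, which is where the extra boxes of $\mathcal{G}_t$ versus Cohn's polygonal approximation come in).

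The main obstacle I expect is bookkeeping the $\half\times\half$ refinement correctly: one must verify that passing from Cohn's polygonal approximation to the domino graph $\mathcal{G}_t$ multiplies the transfer matrix of each turn in precisely the way that reproduces the Cohn matrices $A(n), B(n)$ of~\eqref{Cohn} (including getting the off-diagonal entry to be the Markov number rather than, say, a denominator), and that straight segments versus corners of the staircase get the right local contribution. A clean way to manage this is to work with the zig-zag "median" line description directly — each maximal straight run and each corner of the median line contributes an explicit elementary matching-count matrix, and I would show the product of these over a full $A$-block (resp.\ $B$-block) equals $A(n)$ (resp.\ $B(n)$) up to a fixed conjugation independent of the block. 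Once this local identification is established, the global statement follows by induction on the length of the Farey path, using that both $N(\mathcal{G}_{\bullet})$ and $m^{\bullet}$ obey the fishbone branching of Figure~\ref{fig:fishMarkoff}. I should also note that this theorem is classical (\cite[Thm.\ 7.1]{Pro}, \cite[Thm.\ 7.12]{Aig}, \cite[Thm.\ 5.1]{Sch}); the role of the proof here is to fix the precise normalization of $\mathcal{G}_t$ and the transfer matrices, since that normalization is what the weighted refinement in Section~\ref{SnakeSec} will build on.
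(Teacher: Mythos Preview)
The paper does not give its own proof of Theorem~\ref{ProffThm}; it cites this as a classical result (Propp~\cite[Thm.~7.1]{Pro}, Aigner~\cite[Thm.~7.12]{Aig}, Schiffler~\cite[Thm.~5.1]{Sch}). Your proposed approach --- decomposing $\mathcal{G}_t$ into $A$- and $B$-pieces according to the Christoffel word and identifying the perfect-matching transfer matrices with the Cohn matrices --- is essentially the Aigner proof, and indeed this is exactly the strategy the paper later adapts in Section~\ref{SnakeSec} to prove the $q$-deformed version (Theorem~\ref{SnakeThmBis}): there one shows that attaching an $A$-piece (resp.\ $B$-piece) multiplies the state vector of matching counts by precisely $A(1)_q$ (resp.\ $B(1)_q$), and the product over the Christoffel word $\tilde w_t$ yields the Cohn matrix $C_t$. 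So your plan is correct and aligned with the paper's references and with the argument it carries out in the weighted setting.

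One minor caution: your closing remark that both $N(\mathcal{G}_\bullet)$ and $m^\bullet$ ``obey the fishbone branching'' is not quite how the argument runs. The matching counts do not directly satisfy the Markov mutation $c'=3ab-c$; the link to Markov numbers goes entirely through the identification of the transfer matrices with Cohn matrices, after which $m^t$ is read off as the upper-right entry (classically) or via the trace (in the $q$-case). The induction is on the length of the Christoffel word, not on the Markov-tree depth.
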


This result first appears in~\cite[Thm. 7.1]{Pro}.
A self-contained transparent proof can be found in~\cite[Thm. 7.12]{Aig}.
The statement can be deduced from the result of~\cite{CR} about continued fractions.
Note that the result of~\cite{CR} is actually more general.
Another approach based on connections with cluster algebras is outlined in~\cite{Sch}.

The main goal of this paper is to give a generalization of Theorem~\ref{ProffThm}.
For this end we will assign weights to the edges of~$\mathcal{G}_t$.

\subsection{Christoffel words}\label{TDef}

The lower boundary of the snake graphs gives rise to
a {Christoffel word}. This notion plays an important role in the theory of Markov numbers. 
We refer to \cite{Reu} for details.

Fix a rational number $t\in [0,1]$ and consider the associated Cohn's snake graphs defined in the previous section.
Along the lower boundary of  the graph, assign the letter $X$ to each horizontal step of length 1 and the letter $Y$ to each vertical step of length 1.
One can do the same on the lower boundary of $\G_t$ except that in $\G_t$ the initial horizontal step and the final vertical step are of length $\textstyle \half$.
The \textit{Christoffel word} associated to $t$, denoted by $w_t$, is the word in the letters $X, Y$ 
read from left to right along the lower boundary of  the snake graph.

\begin{ex}
From Fig \ref{fig:snakes} one obtains $w_{\frac35}={XXYXXYXY}$.
\end{ex}

Christoffel words label regions of the binary tree according to the following local rule

\begin{figure}[H]
\begin{center}
\begin{tikzpicture}[scale=0.8]
    \draw[thick,  ->, >=stealth] (1,1)-- (1,2.2);	
       \draw[thick,  ->, >=stealth] (1,2.2)-- (0,3);	
          \draw[thick,  ->, >=stealth] (1,2.2)-- (2,3);
         \node at (0.4, 1.6) {$u$}; 
          \node at (1.6, 1.6) {$v$}; 
          \node at (1, 2.9) {$uv$}; 
\end{tikzpicture}
\label{fig:fishMarkoff2}
\end{center}
\end{figure}
\noindent
where $u$ and $v$ are words in $X, Y$ and $uv$ is the concatenated word.
The binary tree of Christoffel words  is as follows.

\begin{figure}[H]
\begin{center}
\begin{tikzpicture}[scale=0.7, every node/.style={scale=0.8}]
    \node at (1.75,1.25) {$XY$};
    \node at (-1.75,1.25){$X$};
    \node at (0,4) {$X^2Y$};
    \node at (3.11, 3.8) {$X^2YXY$};
    \draw (2.20, 5.76) node[scale=0.9] {$(X^2Y)^2XY$}; 
    \node at (4.57, 1.98) {$X^2YXYXY$}; 
    \node at (0.7, 6.3) {$\cdots$}; 
   \draw (5, 5.85) node[scale=0.9] {${\small{(X^2Y)^2XYX^2YXY}}$}; 
    \node at (5.4, 3.35) {$\cdots$}; 
    \node at (4.2, 0.5) {$\cdots$}; 
    \node at (-3.11, 3.8) {$X^3Y$};
    \node at (-2.20, 5.76) {$X^3YX^2Y$}; 
    \node at (-4.57, 1.98) {$X^4Y$}; 
    \node at (-0.7, 6.3) {$\cdots$}; 
    \node at (-3.8, 5.85) {$\cdots$}; 
    \draw (-5.4, 3.35) node[scale=0.9]  {$X^4YX^3Y$}; 
    \node at (-4.2, 0.5) {$X^5Y$}; 
    
        \node at (0,-1){$Y$};
            \node at (-2.125,-1.125){$\cdots$};

    \draw[thick, ->, >=stealth] (0, 0.5) -- (0, 2);
    
        \draw[thick,  ->, >=stealth] (-1.5, -0.5)-- (0, 0.5);
        
           \draw[thick,  ->, >=stealth] (0, 0.5) -- (1.5, -0.5);
           
          \draw[thick, green] (-1.5,-0.5) -- (-2.75, -0.5);
          
             \draw[thick, green] (-1.5,-0.5) -- (-1.5, -1.75);

    \draw[thick,  ->, >=stealth] (0,2) -- (1.73,3);
        
        \draw[thick,  ->, >=stealth] (1.73, 3) -- (1.99, 4.48);	

            \draw[thick,  ->, >=stealth] (1.99, 4.48) -- (1.31, 5.53);
            
                \draw[thick,  ->, >=stealth] (1.31, 5.53) -- (0.36, 5.87);
                \draw[thick,  ->, >=stealth] (1.31, 5.53) --  (1.23, 6.53);
                
            \draw[thick,  ->, >=stealth] (1.99, 4.48) -- (2.98, 5.24);	
            
                \draw[thick,  ->, >=stealth] (2.98, 5.24) -- (3.24, 6.21);
                \draw[thick,  ->, >=stealth] (2.98, 5.24) -- (3.98, 5.24);

        \draw[thick,  ->, >=stealth] (1.73, 3) -- (3.14, 2.49);

            \draw[thick,  ->, >=stealth] (3.14, 2.49) -- (4.29, 2.97);

                \draw[thick,  ->, >=stealth] (4.29, 2.97) -- (4.79, 3.84);
                \draw[thick,  ->, >=stealth] (4.29, 2.97) -- (5.26, 2.71);
            
            \draw[thick,  ->, >=stealth] (3.14, 2.49) -- (3.72, 1.38);
                
                \draw[thick,  ->, >=stealth] (3.72, 1.38) -- (4.63, 0.96);
                \draw[thick,  ->, >=stealth] (3.72, 1.38) -- (3.55, 0.39);
                
    \draw[thick,  ->, >=stealth] (0,2) -- (-1.73,3);
    
        \draw[thick,  ->, >=stealth] (-1.73, 3) -- (-1.99, 4.48);	
    
            \draw[thick,  ->, >=stealth] (-1.99, 4.48) -- (-1.31, 5.53);
    
                \draw[thick,  ->, >=stealth] (-1.31, 5.53) -- (-0.36, 5.87);
                \draw[thick,  ->, >=stealth] (-1.31, 5.53) -- (-1.23, 6.53);
    
            \draw[thick,  ->, >=stealth] (-1.99, 4.48) -- (-2.98, 5.24);	
    
                \draw[thick,  ->, >=stealth] (-2.98, 5.24) -- (-3.24, 6.21);
                \draw[thick,  ->, >=stealth] (-2.98, 5.24) -- (-3.98, 5.24);
    
        \draw[thick,  ->, >=stealth] (-1.73, 3) -- (-3.14, 2.49);
    
            \draw[thick,  ->, >=stealth] (-3.14, 2.49) -- (-4.29, 2.97);
    
                \draw[thick,  ->, >=stealth] (-4.29, 2.97) -- (-4.79, 3.84);
                \draw[thick,  ->, >=stealth] (-4.29, 2.97) -- (-5.26, 2.71);
    
            \draw[thick,  ->, >=stealth] (-3.14, 2.49) -- (-3.72, 1.38);
    
                \draw[thick,  ->, >=stealth] (-3.72, 1.38) -- (-4.63, 0.96);
                \draw[thick,  ->, >=stealth] (-3.72, 1.38) -- (-3.55, 0.39);
\end{tikzpicture}
\caption{The tree of Christoffel words in $X, Y$}
\label{fig:Christoffel treeXY}
\end{center}
\end{figure}
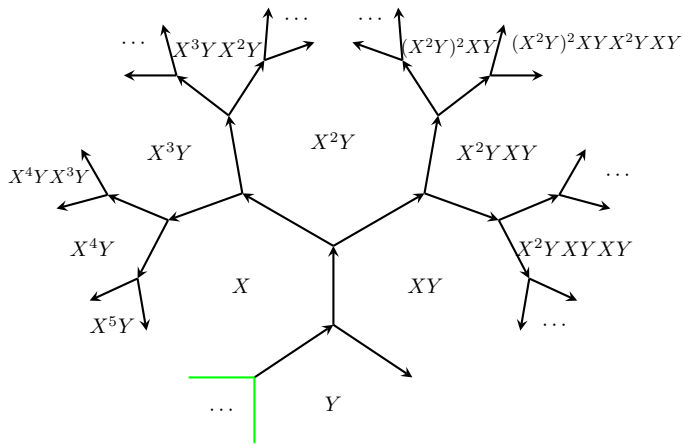

Let us collect here some important facts about Christoffel words.

\begin{enumerate}
\item 
Every Christoffel word is of the form $w=X\pi Y$ where $\pi$ is a palindromic word in $X, Y$;
see \cite[Thm 2.3.1]{Reu}.
\item 
The rational number corresponding to $w_t$ is given by the simple formula:
$$
t=\frac{\# Y\text{'s in } w_t}{\# X\text{'s in } w_t}.
$$
\item Substituting the initial Cohn matrices 
$A(n)$ and $A(n+1)$ in place of $X$ and $Y$ in a Christoffel word $w_t$ lead to the Cohn matrix 
$$
C_t (n)=w_t\left(A(n), A(n+1)\right).
$$

The corresponding Markov number is given by 
$$
m^t=\frac13 \Tr \,M_t(n).
$$
This is a reformulation of Theorem~\ref{CohnThm}.
\end{enumerate}

\begin{rem}
It is often convenient to substitute $A=X$ and $B=XY$ in a Christoffel word $w$. Doing so, one obtains a Christoffel word 
in the letter $A, B$, that we denote by $\tilde w$. For instance, for $t=\frac35$ one has $\tilde w=ABABB$.
In terms of matrices
$$
C_t(n)=w_t\left(A(n), A(n+1)\right)=\tilde w_t \left(A(n), B(n)\right).
$$
\end{rem}
\subsection{Correspondences}

In the previous sections, one has described the following correspondences:
\begin{figure}[H]
\begin{center}
\begin{tikzpicture}
        \node at (0,0){$m^t$};
         \node at (0,0.5){Markov};
        \node at (0,2.5){Rationals};
            \node at (0,2){$t$};
                 \node at (-5,1){$w_t$};
         \node at (-5.5,1.5){Christoffel words};
                  \node at (5,1){$\mathcal{G}_t$};
         \node at (5.2,1.5){Snake graphs};
   \draw[thick,  <-, >=stealth] (1,0.2) --node[below, scale=0.9, rotate=20] {Perfect matchings} (4,1.28);
   
        \draw[thick,  ->, >=stealth] (-4,1.3) --node[below, scale=0.9, rotate=-20] {Cohn matrices} (-1,0.2);
     
        \draw[thick,  <->, >=stealth] (-4,1.5) --(-1,2.2);
         \draw[thick,  <->, >=stealth] (1,2.2) --(4,1.5);
\end{tikzpicture}
\end{center}
\end{figure}
\noindent
We will describe $q$-deformations of Cohn matrices and perfect matchings
that lead to polynomials $m^t_q$.

\section{Proof of the uniqueness result}\label{UniSec}

In this section we prove Theorem~\ref{ExtUniq} stating
that every triple of Markov numbers $(a,b,c)$ has a unique $q$-deformation $(a_q,b_q,c_q)$.

\subsection{Proof of  Proposition~\ref{RecProp}}

We start with the recurrent procedure described in Proposition~\ref{RecProp}.
Recall that $(a_q,b_q,c_q)\in\Z[q^{\pm1}]^3$ 
is called a $q$-Markov triple, if this is a triple of Laurent polynomials with integer coefficients, which are solutions to~\eqref{qMarEq},
and 
$$
(a,b,c):=(a_q,b_q,c_q)\vert_{q=1}
$$ 
is a Markov triple of integers.

\begin{lem}
\label{ConstructLem}
If $(a_q,b_q,c_q)\in\Z[q^{\pm1}]^3$ 
is a $q$-Markov triple, then so is the triple  $(a_q,b_q,c'_q)$ with $c'_q$ given by~\eqref{qMutEq}. 
\end{lem}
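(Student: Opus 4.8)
The plan is to check, directly from the definition, that the triple $(a_q,b_q,c'_q)$ satisfies the three defining requirements of a $q$-Markov triple: its entries lie in $\Z[q^{\pm1}]$, it solves~\eqref{qMarEq}, and its specialization at $q=1$ is a classical Markov triple. First I would note that $\qth = q+1+q^{-1}\in\Z[q^{\pm1}]$, so that $c'_q=\qth\,a_qb_q-c_q$ is again a Laurent polynomial with integer coefficients.

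The substantive point is the identity~\eqref{qMarEq} for the new triple. The key observation is that~\eqref{qMarEq} is symmetric in $a_q,b_q,c_q$, and, viewed as an equation in its third argument $z$ with the other two fixed, it is the monic quadratic
$$
z^2-\qth\,a_qb_q\,z+\bigl(a_q^2+b_q^2-(q-1)(q^{-1}-1)\bigr)=0 .
$$
Since $(a_q,b_q,c_q)$ solves~\eqref{qMarEq}, the element $c_q$ is one root of this quadratic; by Vieta's formula the two roots sum to $\qth\,a_qb_q$, so the other root is exactly $\qth\,a_qb_q-c_q=c'_q$. Hence $(a_q,b_q,c'_q)$ is also a solution of~\eqref{qMarEq}. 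Equivalently, one expands
$$
(c'_q)^2-\qth\,a_qb_q\,c'_q=c'_q\bigl(c'_q-\qth\,a_qb_q\bigr)=-c'_qc_q=c_q\bigl(c_q-\qth\,a_qb_q\bigr)=c_q^2-\qth\,a_qb_q\,c_q ,
$$
and adding $a_q^2+b_q^2$ to both sides reduces the desired identity to the one satisfied by $(a_q,b_q,c_q)$.

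Finally I would specialize at $q=1$. Since~\eqref{qMarEq} holds as an identity of Laurent polynomials, it remains valid after the substitution $q=1$, and $c'_q\big\vert_{q=1}=3ab-c=c'$ is precisely the classical Markov mutation~\eqref{MutEq} of $(a,b,c)$; by the recurrent procedure recalled in Section~\ref{MarSec} this produces a Markov triple of positive integers. Putting the three points together yields that $(a_q,b_q,c'_q)$ is a $q$-Markov triple. There is no serious obstacle here: the only steps requiring a word of care are that the Vieta argument exploits the full symmetry of~\eqref{qMarEq} in the three variables (so that "mutation in the third slot" is meaningful whichever entry is being changed), and that the passage to $q=1$ invokes the classical fact, recalled from~\cite{Aig}, that $3ab-c$ produces again a positive Markov triple.
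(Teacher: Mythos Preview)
Your proof is correct and follows essentially the same approach as the paper's: both arguments reduce the identity~\eqref{qMarEq} for the mutated triple to Vieta's formula for the quadratic (in the third variable), and both then specialize at $q=1$ to invoke the classical fact that $c'=3ab-c=(a^2+b^2)/c>0$ yields a Markov triple. Your write-up is in fact more detailed than the paper's, which simply asserts that the Vieta step is a direct computation.
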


\begin{proof}
It is a simple direct computation to check that if
$(a_q,b_q,c_q)$ satisfies~\eqref{qMarEq}, then $(a_q,b_q,c'_q)$ is also a solution to~\eqref{qMarEq}.
As in the classical Markov case, this is just a reformulation of the Vieta formulas for a quadratic equation.
In addition, evaluating at $q=1$, one obtains 
$$
c'=3ab-c=\frac{a^2+b^2}{c} >0,
$$
so that $(a_q,b_q,c'_q)\vert_{q=1}=(a,b,c')$ is a Markov triple.
\end{proof}

We want to show that performing a sequence of transformations of the type~\eqref{qMutEq} on 
an arbitrary $q$-Markov triple 
$(a_q,b_q,c_q)$ one can eventually arrive at the triple $(1,1,1)$.

Denote by  $d_1, d_2, d_3$ the degrees of the polynomials $a_q, b_q, c_q$ respectively,
and by $\a_1, \a_2, \a_3$ the leading coefficients of the polynomials $a_q, b_q, c_q$ respectively.
We assume $d_1\leq d_2 \leq d_3$
and proceed by induction on~$d_3$.

Case $d_1=d_2=d_3=0$. In this case $(a_q,b_q,c_q)=(\a_1, \a_2, \a_3)\in\Z_{>0}^3$  is a Markov triple satisfying in addition the relation
$$
\a_1^2+\a_2^2+\a_3^2=(q+1+q^{-1})\,\a_1 \a_2 \a_3 +2-q-q^{-1}.
$$
This immediately implies $\a_1 \a_2 \a_3 =1$ so that $(\a_1, \a_2, \a_3)=(1, 1,1)$ is the only triple of 
(positive) constant polynomials satisfying~\eqref{qMarEq}.

Case $d_3\geq 1$. Since $(a_q,b_q,c_q)$ satisfies~\eqref{qMarEq}, one obtains the following relations on the degrees and leading coefficients:
$$
2d_3=d_1+d_2+d_3+1, 
\qquad\qquad 
\a_3^2=\a_1\a_2\a_3.
$$
In the expression of $c'_q$ the term  $q^{-1}\left[3\right]_qa_qb_q$ has degree $d_1+d_2+1$ and leading coefficient $\a_1\a_2$. 
The above relations imply that $q^{-1}\left[3\right]_qa_qb_q$ has degree $d_3$ and
the leading coefficient $\a_3$, the same as~$c_q$.
Hence as the higher order term in $c'_q=q^{-1}\left[3\right]_qa_qb_q-c_q$ vanishes,
and therefore $c'_q$ is of degree $d'_3$, which is strictly less than $d_3$.

Repeating successively transformations on the polynomial of the triple with greatest degree  will eventually lead to a triple of polynomials of degree 0,
which as seen above can only be $(1,1,1)$.
Observing that the transformation~\eqref{qMutEq} is an involution one gets Proposition~\ref{RecProp}.

\subsection{End of the proof}

Finally Theorem~\ref{ExtUniq} readily follows from Proposition~\ref{RecProp},
the fact that the transformation~\eqref{qMutEq} is an involution,
together with the statement that a Markov triple $(a,b,c)$ appears only once in the Markov tree.
For the latter; see \cite[Thm 3.3]{Aig}.

\section{$q$-Markov numbers from the modular group}\label{CohnSec}

In this section, we present the $q$-deformed version of the computations
with Cohn matrices, that we explained in Section~\ref{CohnM}.
The $q$-deformed Cohn matrices we consider belong to the $q$-deformed version of $\PSL(2,\Z)$
described in~\cite{LMGadv}.
As explained in~\cite{MGOV}, this $q$-deformation is the projective version of the classical Burau representation.

Note that a specific choice of $q$-deformed Cohn matrices was considered in~\cite{LMGadv,LLS,Kog,Ogu}.
We prove that the resulting $q$-deformed Markov numbers do not depend on the choice of Cohn matrices.

\subsection{$q$-deformed action of $\PSL(2,\Z)$}

Recall that the group $\PSL(2,\Z)$ is generated by any two of the following three elements
\begin{equation}
\label{GenSL}
T=
\begin{pmatrix}
1&1\\[2pt]
0&1
\end{pmatrix},
\qquad
S=
\begin{pmatrix}
0&-1\\[2pt]
1&0
\end{pmatrix},
\qquad
L=TST=
\begin{pmatrix}
1&0\\[2pt]
1&1
\end{pmatrix}.
\end{equation}
All three of these standard generators will be useful for explicit calculations.
The elements $T$ and $S$ satisfy the relations $S^2=(TS)^3=1$.

Following~\cite{MGOfmsigma,LMGadv}, consider the following matrices
\begin{equation}
\label{GenSLq}
T_{q}=
\begin{pmatrix}
q&1\\[2pt]
0&1
\end{pmatrix},
\qquad
S_{q}=
\begin{pmatrix}
0&-1\\[2pt]
q&{\phantom -}0
\end{pmatrix},
\qquad
L_q=T_qS_qT_q=
\begin{pmatrix}
q&0\\[2pt]
q&1
\end{pmatrix}.
\end{equation}
depending on the parameter~$q$.
They correspond to linear-fractional transformations acting on the space~$\Z(q)$
of rational functions in~$q$ with integer coefficients,
that (slightly abusing the notation) we also denote by $T_{q}$ and $S_{q}$:
\begin{equation}
\label{AcTq}
T_q\left(f(q)\right)=qf(q)+1,
\qquad
S_q\left(f(q)\right)=-\frac{1}{qf(q)},
\qquad
L_q\left(f(q)\right)=-\frac{qf(q)}{qf(q)+1}.
\end{equation}
It is very easy to check that
these operators satisfy exactly the same relations as the classical generators of $\PSL(2,\Z)$, namely
$$
S_q^2=(T_qS_q)^3=1.
$$
Therefore, they generate a $\PSL(2,\Z)$-action on $\Z(q)$.

One arrives at the following conclusion that will be important for the sequel.
Consider the action of~$\PSL(2,\Z)$ generated by~\eqref{AcTq}.
Every element of $\PSL(2,\Z)$ corresponds to a linear-fractional transformation on~$\Z(q)$
and can be represented by a $2\times2$ matrix with coefficients that are polynomial in $q$,
defined modulo a scalar multiple, which is a power of~$q$.
To make the choice of a $2\times2$ matrix representing an element of~$\PSL(2,\Z)$ canonical,
it suffices to fix the determinant of the matrix.
In our case, it will always be possible to choose the determinant equal to~$1$.

\begin{rem}
Note also that the matrices
\begin{equation}
\label{GenB}
T_{q}=
\begin{pmatrix}
q&1\\[2pt]
0&1
\end{pmatrix},
\qquad\qquad
L^{-1}=
\begin{pmatrix}
1&0\\[2pt]
-q&q
\end{pmatrix}
\end{equation}
satisfy the braid relation $T_{q}L^{-1}_qT_{q}=L^{-1}_qT_{q}L^{-1}_q$.
This implies that the matrices $T_{q}$ and $L_q$
generate a representation 
$$
\rho_3:B_3\to\mathrm{GL}(2,\Z(q)),
\qquad\qquad
\sigma_1\mapsto T_q,
\quad
\sigma_2\mapsto L^{-1}_q
$$
of the braid group~$B_3$
by $2\times2$ matrices with coefficients that are rational functions in~$q$.
Here $\sigma_1$ and~$\sigma_2$ are the standard generators of~$B_3$.
This representation has been known for almost a hundred years under the name of 
(reduced) Burau representation; see~\cite{Bir}.
\end{rem}

\subsection{$q$-deformed Cohn matrices}\label{qCohnM}

Consider the Cohn matrices $A(n)$ and $B(n)$, as in~\eqref{Cohn}.
Our next goal is to calculate their $q$-deformations $A(n)_q$ and $B(n)_q$.
Since these matrices are defined up to a scalar multiple, as are all of the elements of~$\PSL(2,\Z)$,
it is natural to look for $q$-deformations satisfying the additional condition
\begin{equation}
\label{DetCond}
\det\left(A(n)_q\right)\;=\;
\det\left(B(n)_q\right)\;=\;1.
\end{equation}
This fixes a canonical choice of the matrices representing elements of~$\PSL(2,\Z)$.

We will use the following standard notation for $q$-integers:
$[n]_q:=\dfrac{q^n-1}{q-1}$;
for positive $n$, one has $[n]_q=q^{n-1}+q^{n-2}+\cdots+1$.

We have the following computational statement.
 
\begin{prop}
\label{AdefProp}
The $q$-deformation  of the matrix $A(n)$ satisfying the condition~\eqref{DetCond}
is as follows.
\begin{equation}
\label{AdefEq}
A(n)_q= \begin{pmatrix} 
q^{2-n} [n]_{q} & q^{1-n} \\[6pt]
 [n]_q[3-n]_{q} - q^{n-1} & q^{-1}[3-n]_{q}
\end{pmatrix}.
\end{equation}
\end{prop}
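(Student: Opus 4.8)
The plan is to realize $A(n)$ explicitly as a word in the generators $T,S,L$ of $\PSL(2,\Z)$, then to $q$-deform that word using $T_q,S_q,L_q$ from~\eqref{GenSLq}, and finally to normalize by a suitable power of $q$ so that the determinant becomes $1$. First I would observe that, classically, $A(n)$ can be written as $A(n)=T^{n}\cdot A(0)$ (or equivalently obtained from $A(0)$ by conjugation/multiplication by powers of $T$), so it suffices to identify $A(0)$ as a short word in $S$ and $L$. A direct check shows $A(0)=\begin{pmatrix}0&1\\-1&3\end{pmatrix}=S^{-1}L^{2}$ or some equally short expression; once such a word $W$ with $A(n)=W(T,S,L)$ is fixed, the candidate $q$-deformation is $W(T_q,S_q,L_q)$, computed as an honest matrix product.

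Next I would carry out that matrix product. Since each of $T_q,S_q,L_q$ has entries that are monomials in $q$ (with $\det T_q=q$, $\det S_q=q$, $\det L_q=q$), the product $W(T_q,S_q,L_q)$ has polynomial entries and determinant $q^{k}$ for $k$ equal to the total number of generators appearing in $W$ counted with the sign of their exponents. Dividing the whole matrix by $q^{k/2}$—or, more carefully, rescaling to make the determinant exactly $1$ while keeping entries in $\Z[q^{\pm1}]$—produces the unique (up to an overall sign, killed by working in $\PSL$) normalization satisfying~\eqref{DetCond}. I would then simplify the resulting four entries and check they match the closed forms in~\eqref{AdefEq}: the simplifications $q^{n-1}+q^{n-2}+\cdots+1=[n]_q$ and the telescoping identity $[n]_q[3-n]_q-q^{n-1}$ for the lower-left entry are the kind of routine $q$-integer manipulations that finish the computation. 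To be safe I would also verify the specialization $q=1$ returns $A(n)$ and spot-check $n=0,1,2$ against the explicit matrices listed after Theorem~\ref{CohnThm}.

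For uniqueness of the normalization I would argue as in the paragraph preceding the proposition: an element of $\PSL(2,\Z)$ determines a linear-fractional transformation on $\Z(q)$, hence a $2\times2$ matrix over $\Z[q^{\pm1}]$ defined only up to multiplication by a power of $q$; imposing $\det=1$ pins down that power uniquely (two powers of $q$ with the same square coincide), so $A(n)_q$ as displayed is the only admissible choice. The same scheme will apply verbatim to $B(n)$ using $B(n)=A(n)A(n+1)$, so I would either prove the $B$ case in parallel or simply note it follows by multiplying $A(n)_q A(n+1)_q$ and checking $\det=1$ is automatic.

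The main obstacle I anticipate is purely bookkeeping: getting the overall power of $q$—and the placement of the various $q^{j}$ factors among the four entries—exactly right, since $q$-deformation is sensitive to the order of the generators and to left-versus-right conventions for the $\PSL(2,\Z)$-action in~\eqref{AcTq}. A secondary subtlety is that the naive word for $A(0)$ might involve $S$ or $L^{-1}$, whose $q$-analogues $S_q,L_q^{-1}$ carry negative powers of $q$ in the determinant; I would need to track these signs carefully so the final determinant normalizes to $1$ rather than to $q^{\pm1}$. Neither is conceptually hard, but both are easy to botch, so I would cross-validate the final formula~\eqref{AdefEq} by an independent direct computation of $\det A(n)_q=q^{2-n}[n]_q\cdot q^{-1}[3-n]_q-q^{1-n}\bigl([n]_q[3-n]_q-q^{n-1}\bigr)$ and check it collapses to $1$.
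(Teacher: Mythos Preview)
Your overall strategy---express $A(n)$ as a word in the standard generators, $q$-deform that word, then rescale to determinant~$1$---is exactly what the paper does. However, both concrete decompositions you propose are wrong, and if you proceeded with them you would not reach~\eqref{AdefEq}. A quick check shows
\[
T^{n}A(0)=\begin{pmatrix}-n&1+3n\\-1&3\end{pmatrix}\neq A(n),
\qquad
S^{-1}L^{2}=\begin{pmatrix}2&1\\-1&0\end{pmatrix}\neq A(0),
\]
so neither $A(n)=T^nA(0)$ nor $A(0)=S^{-1}L^2$ holds, even up to sign. The relation between the various $A(n)$ is through~$L$, not~$T$: one has $A(n)=L^{-n}A(0)L^{n}$ (in $\PSL$), and combining this with $A(0)=L^{3}S$ (up to sign) gives the uniform word the paper uses,
\[
A(n)=L^{3-n}\,S\,L^{n}.
\]
From there the paper's computation is a single matrix product using $L_q^{k}=\begin{pmatrix}q^{k}&0\\q[k]_q&1\end{pmatrix}$ and $S_q$, followed by division by $q^{2}$ (since $\det L_q=\det S_q=q$ and the word has total length~$4$). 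Your normalization and uniqueness paragraphs are fine and match the paper; the only gap is the incorrect word, which is precisely the ``bookkeeping'' hazard you anticipated.
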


\begin{proof}
First check that $A(n)=L^{3-n}\,S\,L^n$, where $L$ is given by~\eqref{GenSL}.
The $q$-deformation of $A(n)$ is then given by 
$$
A(n)_q:=L_q^{3-n}\,S_q\,L_q^n,
$$
where $L_q$ and $S_q$ are as in~\eqref{GenSLq}.
One has
$$
\begin{array}{rcl}
A(n)_q &=& 
\begin{pmatrix} 
q^{3-n}  & 0 \\[4pt]
q[3-n]_{q}&1
\end{pmatrix}
\begin{pmatrix} 
0 & 1 \\[4pt]
-q&0
\end{pmatrix}
\begin{pmatrix} 
q^{n}  & 0 \\[4pt]
q[n]_{q}&1
\end{pmatrix}\\[18pt]
&=&
\begin{pmatrix} 
q^{4-n} [n]_q & q^{3-n} \\[6pt]
q^2[n]_q[3-n]_q - q^{n+1} & q[3-n]_q 
\end{pmatrix}.
\end{array}
$$
and we divide through by $q^2$ so that the determinant is $1$.
\end{proof}

We will also need the companion Cohn matrix $B(n)_q$, where $B(n)$ is given by~\eqref{Cohn}.

\begin{cor}
The $q$-deformation  of the matrix $B(n)$ satisfying the condition~\eqref{DetCond}
\begin{equation}
\label{BdefEq}
B(n)_q= 
\begin{pmatrix} 
q^{1-n} [n+1]_{q}[2]_{q}-q & q^{-n}[2]_{q} \\[8pt]
q^{-1}[n+1]_{q}[3-n]_{q}[2]_{q}-[n+1]_{q} - q^{n-1}[3-n]_{q} & \; q^{-2}[3-n]_{q}[2]_{q}-q^{-1}
\end{pmatrix}.
\end{equation}
\end{cor}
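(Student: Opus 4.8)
The plan is to obtain $B(n)_q$ directly from the factorization $B(n) = A(n)A(n+1)$, which the paper has already recorded, together with the analogous $q$-deformed identity. Concretely, I would set $B(n)_q := A(n)_q \, A(n+1)_q$ using the explicit formula~\eqref{AdefEq} for $A(n)_q$ from Proposition~\ref{AdefProp}, and then verify that the resulting matrix has determinant $1$ and coincides with~\eqref{BdefEq}. The determinant claim is immediate: $\det(A(n)_q A(n+1)_q) = \det(A(n)_q)\det(A(n+1)_q) = 1 \cdot 1 = 1$ by~\eqref{DetCond}, so no rescaling is needed and the condition~\eqref{DetCond} is automatically satisfied. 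It then remains only to check that multiplying out the two matrices gives the stated entries.

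The core of the argument is therefore the matrix multiplication
\[
B(n)_q = \begin{pmatrix} q^{2-n}[n]_q & q^{1-n} \\[4pt] [n]_q[3-n]_q - q^{n-1} & q^{-1}[3-n]_q \end{pmatrix}
\begin{pmatrix} q^{1-n}[n+1]_q & q^{-n} \\[4pt] [n+1]_q[2-n]_q - q^{n} & q^{-1}[2-n]_q \end{pmatrix},
\]
where the second factor is $A(n+1)_q$, obtained by substituting $n \mapsto n+1$ in~\eqref{AdefEq} (note $[3-(n+1)]_q = [2-n]_q$). The four entries are then computed one at a time. For each entry I would use the elementary $q$-integer identities — principally $q^k[m]_q + [k]_q = [m+k]_q$ and $[m]_q + q^m[k]_q = [m+k]_q$, and the product rule $[2]_q[m]_q = [m+1]_q + q[m-1]_q$ (equivalently $q[m]_q + [m]_q = [m+1]_q + q[m-1]_q$ rearranged) — to collapse the expressions into the closed forms appearing in~\eqref{BdefEq}. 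For instance, the top-left entry is $q^{2-n}[n]_q \cdot q^{1-n}[n+1]_q + q^{1-n}([n+1]_q[2-n]_q - q^n)$; factoring $q^{1-n}[n+1]_q$ out of the first two summands and using $q^{3-2n}[n]_q + [2-n]_q$... here one checks that $[n+1]_q\big(q^{?}[n]_q + [2-n]_q\big)$ does not directly telescope, so instead one groups as $q^{1-n}[n+1]_q[2]_q$-type terms; the bookkeeping is what produces the $[2]_q$ factors visible throughout~\eqref{BdefEq}.

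The main obstacle is purely the bookkeeping: making sure the powers of $q$ track correctly through the $q$-integer manipulations, since $[n]_q$, $[3-n]_q$ and $[n+1]_q$ can have negative or zero arguments depending on $n$, and the identities $q^k[m]_q = [m+k]_q - [k]_q$ must be applied with care about which index is ``larger''. A clean way to sidestep sign worries is to treat all the $[m]_q$ as formal rational functions $\tfrac{q^m-1}{q-1}$ and verify the four scalar identities as equalities in $\Z(q)$ — every claimed identity then reduces to an elementary polynomial identity after clearing the denominator $q-1$, which can be checked mechanically. Once the four entries match, the corollary follows, and I would remark that this is simply the $q$-analogue of $B(n) = A(n)A(n+1)$, specializing to the classical formula~\eqref{Cohn} at $q=1$.
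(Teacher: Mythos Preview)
Your approach is correct and essentially identical to the paper's: the paper also defines $B(n)_q := A(n)_q A(n+1)_q$ from the classical factorization $B(n) = A(n)A(n+1)$ and then says ``after a simple computation, one deduces~\eqref{BdefEq} from~\eqref{AdefEq}'', leaving the entry-by-entry verification implicit. Your additional remark that $\det(B(n)_q) = 1$ follows automatically from multiplicativity is a worthwhile clarification that the paper omits.
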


\begin{proof}
$B(n)$ is given by~\eqref{Cohn} can be expressed in terms of~$A(n)$.
Indeed, $B(n)=A(n)A(n+1),$ and therefore
$$
B(n)_q=A(n)_qA(n+1)_q.
$$
After a simple computation, one deduces~\eqref{BdefEq} from~\eqref{AdefEq}.
\end{proof}

\begin{ex}
\label{UsefulEx}
The following special choices
\begin{eqnarray*}
A(0)_q
=
\begin{pmatrix}
0&q\\[4pt]
-q^{-1}&q^{-1}[3]_q
\end{pmatrix},
&
B(0)_q
=
\begin{pmatrix}
q^2&[2]_q\\[4pt]
q[2]_q& q^{-2}[4]_q+1
\end{pmatrix},
\\[10pt]
A(1)_q
=
\begin{pmatrix}
    q & 1 \\[4pt]
    q & q^{-1}[2]_q
\end{pmatrix},
&
B(1)_q
=
\begin{pmatrix}
    [3]_q & q^{-1}[2]_q \\[4pt]
    q^{-1}[4]_q & q^{-2}[3]_q 
\end{pmatrix},
\\[10pt]
A(2)_q
=
\begin{pmatrix}
    [2]_q& q^{-1} \\[4pt]
  1 & q^{-1}
\end{pmatrix},
&
B(2)_q
=
 \begin{pmatrix}
    q^{-1}[4]_q+1& q^{-2}[2]_q \\[4pt]
    q^{-1}[2]_q & q^{-2}
\end{pmatrix}
\end{eqnarray*}
 are particularly useful in practice
 and will be needed in Section~\ref{SnakeSec}.
 \end{ex}

\subsection{$q$-Markov numbers as traces of Cohn matrices}

Consider now the Cohn tree, as depicted in Fig.~\ref{fig:Christoffel tree AB},
and replace the classical matrices $A(n),B(n)$ by the $q$-deformed matrices $A(n)_q,B(n)_q$.

The main statement of this section is the following.

\begin{thm}
\label{CohnShiftThm}
For every matrix $C_t$ of the Cohn tree, the trace of the $q$-deformed matrix is equal to
$q^{-1}\left[3\right]_qm^t_q$,
where $m^t_q$ is the  $q$-deformed analogue of  $m^t$
defined by \eqref{qMarEq}.
\end{thm}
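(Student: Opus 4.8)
The plan is to prove the statement by induction on the position of $C_t$ in the Cohn tree, showing simultaneously that (a) $\det C_t(n)_q = 1$ for every $n$ and every $t$, (b) the off-diagonal structure of $C_t(n)_q$ behaves compatibly with the mutation rule, and (c) $\Tr C_t(n)_q = q^{-1}[3]_q\,m^t_q$. The base cases are $t = \tfrac01$ and $t=\tfrac11$, i.e. the initial Cohn matrices $A(n)_q$ and $B(n)_q$ computed in Proposition~\ref{AdefProp} and its corollary; one checks directly that $\Tr A(n)_q = q^{2-n}[n]_q + q^{-1}[3-n]_q$ and that this equals $q^{-1}[3]_q = q + 1 + q^{-1}$ after using $q^{2-n}[n]_q = q^{2-n}+\cdots+q$ and $q^{-1}[3-n]_q = q^{-1}+\cdots+q^{1-n}$, so that the sum telescopes to $q+1+q^{-1}$; since $1_q=1$, this matches. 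Similarly $\Tr B(n)_q = q^{-1}[3]_q\cdot 2_q$ with $2_q = q+q^{-1}$.

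The inductive step requires a $q$-analogue of the classical trace identity underpinning the Cohn tree. Recall that in the classical case, the Cohn tree is generated by the local rule that around each vertex of the tree three words $u, v, uv$ appear (cf. Figure~\ref{fig:Christoffel treeXY}), and the matrices satisfy the Markov-type trace relation; the key is the identity $\Tr(MN) + \Tr(MN^{-1}) = \Tr(M)\Tr(N)$ valid for $2\times2$ matrices of determinant~$1$. So the first step is to verify that all the $q$-deformed Cohn matrices $C_t(n)_q$ have determinant~$1$: this follows because they are words in $A(n)_q, A(n+1)_q$, both of determinant~$1$ by construction~\eqref{DetCond}, and the determinant is multiplicative. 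Then, writing $u_q, v_q$ for the matrices at the two parents and $w_q = u_q v_q$ for the child, the Cauchy--Binet-type identity gives
\begin{equation}
\label{TraceIdEq}
\Tr(u_q v_q) = \Tr(u_q)\Tr(v_q) - \Tr(u_q v_q^{-1}).
\end{equation}
One identifies $u_q v_q^{-1}$ as the matrix sitting across the edge separating $u_q$ and $v_q$ — the ``third neighbour'' in the fishbone — whose trace is $q^{-1}[3]_q$ times the Markov number produced by the classical mutation partner. Thus if $\Tr(u_q) = q^{-1}[3]_q\, a_q$, $\Tr(v_q) = q^{-1}[3]_q\, b_q$ and $\Tr(u_q v_q^{-1}) = q^{-1}[3]_q\, c_q$, where $(a_q, b_q, c_q)$ is the $q$-Markov triple at the relevant vertex, then~\eqref{TraceIdEq} reads
$$
\Tr(w_q) = \big(q^{-1}[3]_q\big)^2 a_q b_q - q^{-1}[3]_q\, c_q = q^{-1}[3]_q\left(q^{-1}[3]_q\, a_q b_q - c_q\right) = q^{-1}[3]_q\, c'_q,
$$
which is exactly $q^{-1}[3]_q\, m^{t}_q$ by the mutation formula~\eqref{qMutEq} and the uniqueness from Theorem~\ref{ExtUniq}. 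Induction then propagates this through the whole tree.

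The main obstacle I anticipate is bookkeeping the identification of the ``third neighbour'' matrix $u_q v_q^{-1}$ correctly and consistently with the orientation conventions of the Cohn/Christoffel tree — in particular making sure that at each branching the roles of $u, v$ and the resulting word $uv$ match Aigner's classical labelling, so that the inductive hypothesis applies to both parents and to the cross-edge neighbour. One must be careful that the $q$-deformed words are taken in the correct order (the representation is not abelian, and $\Tr$ is only conjugation-invariant, not order-invariant beyond cyclic permutation), and that the scalar normalization~\eqref{DetCond} is preserved at each step — which it is, by multiplicativity of the determinant, but this needs to be stated. A secondary technical point is establishing the base-case trace computations cleanly; these are the telescoping sums of $q$-integers indicated above, and are routine but should be displayed. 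Once~\eqref{TraceIdEq} and the neighbour identification are in place, the result follows formally; alternatively, one can bypass~\eqref{TraceIdEq} by invoking Proposition~\ref{RecProp} directly: every vertex of the Cohn tree is reached from the root by mutations, and one checks that a single mutation multiplies the relevant trace according to~\eqref{qMutEq}, which is the same computation packaged differently.
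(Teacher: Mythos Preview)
Your proposal is correct and follows essentially the same route as the paper: induction on the Cohn tree, the base cases $\Tr A(n)_q=q^{-1}[3]_q$ and $\Tr B(n)_q=q^{-1}[3]_q\cdot 2_q$, then the trace identity $\Tr(MN)=\Tr(M)\Tr(N)-\Tr(MN^{-1})$ for determinant-$1$ matrices, matched against the mutation~\eqref{qMutEq} and closed off via Theorem~\ref{ExtUniq}. The one point you flag as an obstacle --- verifying the induction hypothesis for the ``third neighbour'' $u_qv_q^{-1}$ at the root --- is exactly what the paper handles by the observation $A(n)_qB(n)_q^{-1}=A(n-2)_q$, so that its trace is again $q^{-1}[3]_q$; once you add that one line, your argument is complete and coincides with the paper's.
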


\begin{proof}
We proceed by induction on the position of the matrices in the Cohn tree.

(1) The induction basis.
From~\eqref{AdefEq}, one has 
\begin{align*}
    \Tr(A(n)_q) &= q^{2-n} [n]_q + q^{-1} [3-n]_q \\
    &= \frac{1}{q-1} \left(q^{2-n} (q^n - 1) + q^{-1} (q^{3-n} - 1)\right) \\
    &= \frac{1}{q-1} \left(q^2 - q^{2-n} + q^{2-n} - q^{-1}\right) \\
    &= \frac{q^3 - 1}{q\left(q-1\right)} \\
    &= q^{-1}[3]_q.
\end{align*}
It is independent of $n$ and is equal to $q^{-1}[3]_q=q+1+q^{-1}$, as required.

A similar computation using~\eqref{BdefEq} gives
$$
 \Tr(B(n)_q) = q^{-1}\left[3\right]_q(q+q^{-1}),
$$
in accordance with the second $q$-Markov number $2_q=q+q^{-1}$.

Note also that 
$$
A(n)_qB^{-1}(n)_q=A(n-2)_q,
$$
and therefore the trace of this matrix is equal to $\Tr(A(n)_q)$:
$$
\Tr\left(A(n)_qB^{-1}(n)_q\right)=q+1+q^{-1}.
$$

(2) The induction step.
Suppose that $q$-deformed Cohn matrices $M$ and $N$ 
are separated by some branch of the Cohn tree 
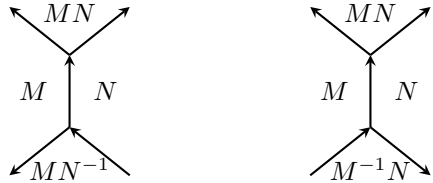
\begin{figure}[H]
\begin{center}
\begin{tikzpicture}[scale=0.8]
 \draw[thick,  <-, >=stealth] (0,0.2) -- (1,1);	
  \draw[thick,  <-, >=stealth] (1,1)-- (2,0.2);	
    \draw[thick,  ->, >=stealth] (1,1)-- (1,2.2);	
       \draw[thick,  ->, >=stealth] (1,2.2)-- (0,3);	
          \draw[thick,  ->, >=stealth] (1,2.2)-- (2,3);
      \node at (1,0.3) {$MN^{-1}$}; 
         \node at (0.4, 1.6) {$M$}; 
          \node at (1.6, 1.6) {$N$}; 
          \node at (1, 2.9) {$MN$}; 
          \begin{scope}[shift={(5,0)}]
 \draw[thick,  ->, >=stealth] (0,0.2) -- (1,1);	
  \draw[thick,  ->, >=stealth] (1,1)-- (2,0.2);	
    \draw[thick,  ->, >=stealth] (1,1)-- (1,2.2);	
       \draw[thick,  ->, >=stealth] (1,2.2)-- (0,3);	
          \draw[thick,  ->, >=stealth] (1,2.2)-- (2,3);
      \node at (1,0.3) {$M^{-1}N$}; 
         \node at (0.4, 1.6) {$M$}; 
          \node at (1.6, 1.6) {$N$}; 
          \node at (1, 2.9) {$MN$}; 
\end{scope}
\end{tikzpicture}
\caption{Cohn branchings.}
\label{fig:fishABC}
\end{center}
\end{figure}

\noindent
and we know that  
\begin{equation}
\label{StepEq}
\Tr(M)=q^{-1}\left[3\right]_qm^t,
\qquad
\Tr(N)=q^{-1}\left[3\right]_qm^{t'},
\qquad
\Tr(MN^{-1})=q^{-1}\left[3\right]_qm^{t\ominus t'},
\end{equation}
where $t\ominus t'$ is the operation opposite to the Farey sum $t\oplus t'$:
$$
\frac{r}{s}\ominus\frac{r'}{s'}:=
\frac{r-r'}{s-s'}.
$$
We need to prove that $\Tr(MN)=q^{-1}\left[3\right]_qm^{t\oplus t'}$.
We give the details in the case Fig.~\ref{fig:fishABC} (left), the second case is similar.

\noindent We use the well-known formula for $2\times2$ matrices
\begin{equation}
\label{TraceEq}
\Tr(MN)=\Tr(M)\Tr(N)-\Tr(MN^{-1}).
\end{equation}
Then~\eqref{StepEq} gives
$$
\Tr(MN)=
q^{-1}\left[3\right]_q\left(q^{-1}\left[3\right]_q m^{t} m^{t'}
-m^{t\ominus t'}\right).
$$
The right-hand side of this equation is nothing but the Vieta transform~\eqref{qMutEq}.

Therefore $\Tr(MN)$ is a multiple of $q^{-1}\left[3\right]_q$, and after division by this Laurent polynomial it satisfies~\eqref{qMarEq}
(cf. Proposition~\ref{RecProp}).
It then follows from Theorem~\ref{ExtUniq} that $\frac{\Tr(MN)}{q+1+q^{-1}}$ is 
a $q$-Markov number corresponding to the Markov number $m^{t\oplus t'}$.
Hence the induction step.

Theorem~\ref{CohnShiftThm} is proved.
\end{proof}

\begin{rem}
Theorem~\ref{CohnShiftThm} means in particular that the traces of Cohn matrices do not depend on the integer parameter~$n$,
and these traces are always a multiple of $q^{-1}\left[3\right]_q$.
Observe also that the upper right elements heavily depend on the choice of initial Cohn matrices.
Note that the definition of $q$-deformed Markov numbers in~\cite{LMGadv,LLS}
is based on a choice of Cohn matrices and makes use of the upper right element of it.
\end{rem}

\subsection{Entries of Cohn matrices}

It turns out that there are interesting relations between the trace and the entries of the first row and the second column of a
$q$-deformed Cohn matrix,
as suggested in~\cite[Lemma 2.3]{Kog}.

\begin{lem}
 \label{Trace_coeffs_relation}
 Let 
 $$
C=
\begin{pmatrix}
c_{11}&c_{12}\\[4pt]
c_{21}&c_{22}
\end{pmatrix}
$$
be an arbitrary $q$-deformed Cohn matrix.
One then has the following relations
\begin{eqnarray}
    \label{eq1}
\frac{\Tr(C)}{\qth} &=& \left(q^{-1}-q^{-2}\right)c_{11} + q^{-1}c_{12},\\
&=& q^2c_{12}+q\left(1-q\right)c_{22}.
\label{eq2}
\end{eqnarray}

\end{lem}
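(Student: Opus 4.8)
The plan is to prove the two relations simultaneously by induction on the position of $C$ in the Cohn tree, mirroring the structure of the proof of Theorem~\ref{CohnShiftThm}. First I would check the base cases: one verifies \eqref{eq1} and \eqref{eq2} directly for the initial matrices $A(n)_q$ and $B(n)_q$ using the explicit formulas~\eqref{AdefEq} and~\eqref{BdefEq}, together with the already-computed traces $\Tr(A(n)_q)=q^{-1}[3]_q$ and $\Tr(B(n)_q)=q^{-1}[3]_q(q+q^{-1})$. For $A(n)_q$, for instance, relation~\eqref{eq1} reads $1=(q^{-1}-q^{-2})q^{2-n}[n]_q+q^{-1}\cdot q^{1-n}$, which after clearing denominators is the telescoping identity $q^{2-n}(q-1)[n]_q+q^{1-n}(q-1)=q-1$ divided appropriately, and similarly for~\eqref{eq2}; these are the same sort of $q$-integer manipulations already carried out in the proof of Theorem~\ref{CohnShiftThm}.

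For the induction step, suppose $M$ and $N$ are $q$-deformed Cohn matrices adjacent in the tree, both satisfying~\eqref{eq1} and~\eqref{eq2}, and consider the child $MN$. The key observation is that both right-hand sides of~\eqref{eq1} and~\eqref{eq2} are \emph{linear} functionals of the matrix entries, namely $\phi_1(C)=(q^{-1}-q^{-2})c_{11}+q^{-1}c_{12}$ and $\phi_2(C)=q^2c_{12}+q(1-q)c_{22}$, while $\Tr(C)/(q^{-1}[3]_q)=m^t_q$ by Theorem~\ref{CohnShiftThm}. I would exploit the fact that $\phi_1$ and $\phi_2$ each only involve the first row or the second column, and combine this with the product structure: writing $M=\begin{pmatrix}m_{11}&m_{12}\\ m_{21}&m_{22}\end{pmatrix}$ and similarly for $N$, the first row of $MN$ is $(m_{11}n_{11}+m_{12}n_{21},\ m_{11}n_{12}+m_{12}n_{22})$ and its $(2,2)$ entry is $m_{21}n_{12}+m_{22}n_{22}$. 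The cleanest route is probably to show that $\phi_1(MN)$ and $\phi_2(MN)$ satisfy the same Vieta-type recurrence $\phi_i(MN)=q^{-1}[3]_q\,\phi_i(M)\phi_i(N)-\phi_i(MN^{-1})$ that the traces do; since by induction $\phi_i(M)=m^t_q$, $\phi_i(N)=m^{t'}_q$, $\phi_i(MN^{-1})=m^{t\ominus t'}_q$, and the traces obey exactly this recurrence, it follows that $\phi_i(MN)=\Tr(MN)/(q^{-1}[3]_q)$. Establishing this recurrence for $\phi_i$ uses $\det M=\det N=1$ (so that $N^{-1}=\begin{pmatrix}n_{22}&-n_{12}\\ -n_{21}&n_{11}\end{pmatrix}$) and is a bilinear identity in the twelve entries $m_{ij},n_{ij}$ that should collapse after using the constraints $\phi_i(M)=\psi_i(M)$, where $\psi_i(M):=\Tr(M)/(q^{-1}[3]_q)$ — i.e. using the inductive hypothesis itself to rewrite some entries.

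Alternatively — and this may be simpler — one can avoid the recurrence entirely by noting that the \emph{difference} $\phi_i(C)-\psi_i(C)$ is a linear functional of the entries of $C$ that vanishes on all initial Cohn matrices $A(n)_q,B(n)_q$; one then checks that the set of $q$-Cohn matrices on which a given entry-linear functional vanishes is closed under the tree operation $M,N\mapsto MN$, given that it holds for $M,N$ and for $MN^{-1}$, again via $\Tr(MN)=\Tr(M)\Tr(N)-\Tr(MN^{-1})$ and the bilinear expansion of the relevant entry of $MN$ in terms of those of $M,N$ and $N^{-1}$. Either way, the main obstacle I anticipate is the bookkeeping in the induction step: verifying that the bilinear expression for the entries of $MN$, once one substitutes the inductive relations and uses $\det=1$, reduces exactly to the trace recurrence. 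This is not conceptually hard but requires care to track which entries get eliminated using which hypothesis; a helpful intermediate step is to record, as a lemma, that for any $q$-Cohn matrix $C$ one also has $c_{11}+ (1-q)^{-1}\!\cdot(\text{something})$ — more precisely, to extract from~\eqref{eq1}--\eqref{eq2} a third identity expressing $c_{11}$, $c_{12}$, $c_{22}$ individually in terms of $m^t_q$ and one further invariant, which then makes the bilinear check mechanical.
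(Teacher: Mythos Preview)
Your overall architecture is correct and matches the paper's: induction on the Cohn tree, base cases $A(n)_q$ and $B(n)_q$ checked by direct computation, and the induction step powered by the trace identity $\Tr(MN)=\Tr(M)\Tr(N)-\Tr(MN^{-1})$. Where you diverge is in how you organise the induction step, and there the paper's argument is considerably simpler than either of the routes you sketch.

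You propose to establish a separate recurrence $\phi_i(MN)=q^{-1}[3]_q\,\phi_i(M)\phi_i(N)-\phi_i(MN^{-1})$ and then match it to the trace recurrence. But this relation is \emph{not} a formal identity in the matrix entries (try it: the $\alpha$-part produces a spurious term $m_{22}n_{11}-m_{11}n_{22}$), so proving it already forces you to feed the induction hypothesis back in, and your proposal ends with anticipated bookkeeping difficulties and a possible auxiliary lemma. None of that is needed. The paper instead applies the hypothesis \emph{asymmetrically}: starting from
\[
\frac{\Tr(MN)}{q^{-1}[3]_q}=\Tr(M)\cdot\frac{\Tr(N)}{q^{-1}[3]_q}-\frac{\Tr(P)}{q^{-1}[3]_q},\qquad P=M^{-1}N,
\]
it leaves $\Tr(M)=m_{11}+m_{22}$ untouched and replaces only the two normalised traces on the right by $\alpha n_{11}+\beta n_{12}$ and $\alpha p_{11}+\beta p_{12}$ via the induction hypothesis. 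Writing $p_{11}=m_{22}n_{11}-m_{12}n_{21}$ and $p_{12}=m_{22}n_{12}-m_{12}n_{22}$ (from $\det M=1$), every $m_{22}$ term cancels and one is left with $\alpha(m_{11}n_{11}+m_{12}n_{21})+\beta(m_{11}n_{12}+m_{12}n_{22})=\alpha c_{11}+\beta c_{12}$ in two lines. Crucially, the computation is independent of the particular constants $\alpha,\beta$, so \eqref{eq1} and \eqref{eq2} are handled simultaneously with no case analysis and no auxiliary identity. The ``main obstacle'' you anticipate simply does not arise once the substitution is made asymmetrically.
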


\begin{proof}
Let us give the details for the relation~\eqref{eq1},
the proof of~\eqref{eq2} is similar.

The proof goes along the same lines as that of Theorem~\ref{CohnShiftThm}.
We proceed by induction on the Cohn tree. 

(1) The induction basis.
A simple direct computation shows that the relation \eqref{eq1} holds for the Cohn matrices 
$A(n)_q$ and $B(n)_q$ in the initial triple.

(2) The induction step.
Let $(M,N,M^{-1}N)$ be a triple of Cohn matrices around a vertex of the tree.
We assume that the relation \eqref{eq1} holds for $C= M, N,$ and $M^{-1}N$,
as in Fig.~\ref{fig:fishABC} (right).
Note that the case Fig.~\ref{fig:fishABC} (left) is similar.
We want to show that the relation holds also for $C=MN$.

Consider the coefficients of the matrices 
$$
M=(m_{ij}),
\quad 
N=(n_{ij}),
\quad 
P=M^{-1}N=(p_{ij}), 
\quad
C=MN=(c_{ij}),
$$ 
and for simplicity set
$$
\a:=q^{-1}-q^{-2}, \qquad \b:=q^{-1}.
$$
The proof of~\eqref{eq1} actually does not depend on the choice of constants $\a$ and~$\b$.

The relation~\eqref{TraceEq} reads
\begin{align*}
\frac{\Tr(MN)}{\qth} &= \qth\left(\frac{\Tr(M)}{\qth} \, \frac{\Tr(N)}{\qth}\right) - \frac{\Tr(P)}{\qth}\\[4pt]
&= ( m_{11} +  m_{22})(\a n_{11} + \b  n_{12}) - (\a p_{11} + \b p_{12}),
\end{align*}

\noindent with  $P = M^{-1}N$ so 
$$
p_{11} = m_{22}n_{11} -m_{12}n_{21}
\qquad\hbox{and}\qquad
p_{12} = m_{22}n_{12} - m_{12}n_{22}.
$$
One then computes for $C=MN$:
\begin{align*}
\frac{\Tr(C)}{\qth} &= (m_{11} + m_{22})(\a n_{11} + \b n_{12}) - (\a (m_{22}n_{11} -m_{12}n_{21}) + \b (m_{22}n_{12} - m_{12}n_{22})),\\
&= \a (m_{11}n_{11} + m_{12}n_{21}) + \b (m_{11}n_{12} + m_{12}n_{22})\\[4pt]
&= \a c_{11} + \b c_{12}.
\end{align*}
Hence the lemma.
\end{proof}

\begin{rem}
It is worth mentionning that the expression~\eqref{eq2} coincides with the numerator of the ``left $q$-deformation'' of the rational~$\frac{c_{12}}{c_{22}}$;
see~\cite{BBL}.
The same expression appears in~\cite{MGOfmsigma}
as the Jones polynomial corresponding to the rational knot parametrized by~$\frac{c_{12}}{c_{22}}$.
\end{rem}

\section{A combinatorial model}\label{SnakeSec}

In this section, we prove Theorem~\ref{SnakeThm}
stating that $q$-Markov numbers count perfect matchings of
snake graphs with weighted edges.

\subsection{Snake graphs with weighted edges}\label{WeghtedSnakeS}
\begin{defn}
The \textit{weighted snake graph} $\mathcal{G}_t(q)$ is obtained from the snake graph 
$\G_t$ by assigning weights on the edges of $\G_t$ according to the following rule.

\begin{enumerate}

\item 
Vertical edges on the Western boundary of $\G_t$ are assigned with weights $q^{-1}, q, \ldots$ alternating from bottom to top.

\item
Horizontal edges on the Southern boundary of $\G_t$ are assigned with weights $q, q^{-1}, \ldots$ alternating from left to right.

\item
Other edges of $\G_t$ are assigned with weight~$1$.

\end{enumerate}

\end{defn}

Alternatively, for $0<t<1$, the graph $\mathcal{G}_t(q)$ can be constructed directly from the Christoffel words.
More precisely we consider the Christoffel word $\tilde w_t (A,B)$ 
obtained from the word $w_t$ by substituting the letters $A=X$, $B=XY$
(see Section~\ref{TDef}).
Each letter in $\tilde w_t $ gives rise to a piece of snake graph according to the following description:

\begin{center}
\begin{tikzpicture}[scale=0.8]
    \begin{scope}[shift={(-4,0)}]
    \draw[line width=2pt,red] (0,0)-- (1,0) node[below,midway] {$q$};
    \draw[line width=2pt,blue] (1,0)-- (2,0) node[below,midway] {$q^{-1}$};
    \draw[line width=2pt,blue] (0,0)-- (0,1) node[left,midway] {$q^{-1}$};
    \draw[line width=0.7pt] (0,1)--(2,1)--(2,0);
    \draw[line width=0.7pt] (1,1)--(1,0);
    \node at(1,-1.2) {Initial $A$};
    \end{scope}

    \begin{scope}
    \draw[line width=2pt,red] (0,0)-- (1,0) node[below,midway] {$q$};
    \draw[line width=2pt,blue] (1,0)-- (2,0) node[below,midway] {$q^{-1}$};
    \draw[line width=0.7pt] (0,0)-- (0,1);
    \draw[line width=0.7pt] (0,1)--(2,1)--(2,0);
    \draw[line width=0.7pt] (1,1)--(1,0);
    \node at(1,-1.2) {$A$};
    \end{scope}

    \begin{scope}[shift={(4,0)}]
    \draw[line width=2pt,red] (0,0)--(1,0) node[below,midway] {$q$};
    \draw[line width=0.7pt] (0,0)--(0,1);
    \draw[line width=2pt,red] (0,1)--(0,2) node[left,midway] {$q$};
    \draw[line width=2pt,blue] (0,2)--(0,3) node[left,midway] {$q^{-1}$};
    \draw[line width=0.7pt] (1,0)--(1,3);
    \draw[line width=0.7pt] (0,3)--(2,3);
    \draw[line width=0.7pt] (0,1)--(1,1);
    \draw[line width=0.7pt] (0,2)--(1,2);
    \draw[line width=2pt,blue] (1,2)--(2,2) node[below,midway] {$q^{-1}$};
    \draw[line width=0.7pt] (2,2)--(2,3);

    \node at(1,-1.2) {$B$};
    \end{scope}

    \begin{scope}[shift={(8,0)}]
    \draw[line width=2pt,red] (0,0)--(1,0) node[below,midway] {$q$};
    \draw[line width=0.7pt] (0,0)--(0,1)--(1,1)--(1,0);
    \node at(0.5,-1.2) {Final $B$};
    \end{scope}
\end{tikzpicture}
\end{center}
All the pieces are connected by gluing the rightmost vertical edge of the previous piece to the bottom leftmost vertical edge of the next piece.

\begin{ex}
(a)
Examples of $\G_t(q)$, for $t=\frac12, \frac13, \frac 23$, corresponding to the Markov numbers $5, 13, 29$, respectively.

\begin{center}
\begin{tikzpicture}[scale=0.8]
    \begin{scope}[shift={(-6,0)}]
    \draw[line width=2pt,red] (0,0)-- (1,0) node[below,midway] {$q$};
    \draw[line width=2pt,blue] (1,0)-- (2,0) node[below,midway] {$q^{-1}$};
    \draw[line width=2pt,blue] (0,0)-- (0,1) node[left,midway] {$q^{-1}$};
    \draw[line width=0.7pt] (0,1)--(2,1)--(2,0);
    \draw[line width=0.7pt] (1,1)--(1,0);
        \draw[line width=2pt,red] (2,0)--(3,0) node[below,midway] {$q$};
    \draw[line width=0.7pt] (3,0)--(3,1)--(2,1);
    \node at(1,-1.2) {$\tilde w_t=AB$};
 \draw (0.5, 0.5) node[] {$2$};
		\draw (1.5, 0.5) node[] {$3$};
		\draw (2.5, 0.5) node[] {$5$};
    \end{scope}
    
        \begin{scope}[shift={(-1,0)}]
    \draw[line width=2pt,red] (0,0)-- (1,0) node[below,midway] {$q$};
    \draw[line width=2pt,blue] (1,0)-- (2,0) node[below,midway] {$q^{-1}$};
    \draw[line width=0.7pt] (0,0)-- (0,1) node[left,midway] {$q^{-1}$};
    \draw[line width=0.7pt] (0,1)--(2,1)--(2,0);
    \draw[line width=0.7pt] (1,1)--(1,0);
        \draw[line width=2pt,red] (2,0)--(3,0) node[below,midway] {$q$};
    \draw[line width=0.7pt] (3,0)--(3,1)--(2,1);
            \draw[line width=2pt,blue] (3,0)--(4,0) node[below,midway] {$q^{-1}$};
    \draw[line width=0.7pt] (4,0)--(4,1)--(3,1);
    \node at(1,-1.2) {$\tilde w_t=A^2B$};
    \end{scope}
       \begin{scope}[shift={(3,0)}]
    \draw[line width=2pt,red] (0,0)--(1,0) node[below,midway] {$q$};
    \draw[line width=0.7pt] (0,0)--(0,1)--(1,1)--(1,0);
   \draw (-3.5, 0.5) node[] {$2$};
		\draw (-2.5, 0.5) node[] {$3$};
		\draw (-1.5, 0.5) node[] {$5$};
		\draw (-0.5, 0.5) node[] {$8$};
		\draw (0.5, 0.5) node[] {$13$};
    \end{scope}

    \begin{scope}[shift={(6,0)}]
    \draw[line width=2pt,red] (0,0)-- (1,0) node[below,midway] {$q$};
    \draw[line width=2pt,blue] (1,0)-- (2,0) node[below,midway] {$q^{-1}$};
    \draw[line width=2pt,blue] (0,0)-- (0,1) node[left,midway] {$q^{-1}$};
    \draw[line width=0.7pt] (0,1)--(2,1)--(2,0);
    \draw[line width=0.7pt] (1,1)--(1,0);
    \node at(1,-1.2) {$\tilde w_t=AB^2$};
    		\draw (0.5, 0.5) node[] {$2$};
		\draw (1.5, 0.5) node[] {$3$};
		\draw (2.5, 0.5) node[] {$5$};
		\draw (2.5, 1.5) node[] {$7$};
		\draw (2.5, 2.5) node[] {$12$};
		\draw (3.5, 2.5) node[] {$17$};
		\draw (4.5, 2.5) node[] {$29$};
    \end{scope}

    \begin{scope}[shift={(8,0)}]
    \draw[line width=2pt,red] (0,0)--(1,0) node[below,midway] {$q$};
    \draw[line width=0.7pt] (0,0)--(0,1);
    \draw[line width=2pt,red] (0,1)--(0,2) node[left,midway] {$q$};
    \draw[line width=2pt,blue] (0,2)--(0,3) node[left,midway] {$q^{-1}$};
    \draw[line width=0.7pt] (1,0)--(1,3);
    \draw[line width=0.7pt] (0,3)--(2,3);
    \draw[line width=0.7pt] (0,1)--(1,1);
    \draw[line width=0.7pt] (0,2)--(1,2);
    \draw[line width=2pt,blue] (1,2)--(2,2) node[below,midway] {$q^{-1}$};
    \draw[line width=0.7pt] (2,2)--(2,3);
    \end{scope}
     \begin{scope}[shift={(10,2)}]
    \draw[line width=2pt,red] (0,0)--(1,0) node[below,midway] {$q$};
    \draw[line width=0.7pt] (0,0)--(0,1)--(1,1)--(1,0);
    \end{scope}

   \end{tikzpicture}
\end{center}

(b)
Figure~\ref{SnakeFig1} gives an example for $t=\frac35$.
\end{ex}

\subsection{Detailed statement of the main theorem}

Let us first recall the standard definitions.

\begin{defn}
(i)
 Given a graph $\G$ with weighted edges,
 we  will use the standard notion of the {\it weighted perfect matching} in $\G$.
For a given perfect matching $\mathfrak{m}$, we define its weight, ${\rm{wt}}(\mathfrak{m})$, as the product of the weights of its edges.
The weight is a power of~$q$.

(ii)
Given a snake graph $\mathcal{G}_t(q)$ with weighted edges, 
we define the {\it the weighted number of perfect matchings}
as the generating function for the weighted perfect matchings:
$$
\mu^t(q):=
\sum_{\mathfrak{m}}
{\rm{wt}}(\mathfrak{m}).
$$
Other terms sometimes used in the dimer theory are ``statistics'' or ``statistical sum''. 
\end{defn}

The following statement is a more detailed reformulation of our main result
announced in the introduction as Theorem~\ref{SnakeThm}.

\begin{thm}
\label{SnakeThmBis}
The $q$-deformed Markov number $m_q^t$ is
equal to the weighted number of perfect matchings $\mu^t(q)$ in the snake graph $\mathcal{G}_t(q)$. 
\end{thm}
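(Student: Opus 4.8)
The plan is to prove the identity $m_q^t = \mu^t(q)$ by induction on the Farey/binary tree structure that simultaneously organizes the rationals $t$, the $q$-Markov numbers $m_q^t$ (via Proposition~\ref{RecProp} and Theorem~\ref{CohnShiftThm}), and the snake graphs $\mathcal{G}_t$. The key is to show that the weighted perfect-matching count $\mu^t(q)$ satisfies the very same recurrence \eqref{qMutEq} as $m_q^t$, namely that whenever $t'' = t \oplus t'$ and $t''' = t \ominus t'$ are the Farey children/parent of $t,t'$, one has
\begin{equation*}
\mu^{t''}(q)\,\mu^{t'''}(q) = \mu^t(q)\,\mu^{t'}(q) \cdot q^{-1}[3]_q \cdot (\text{correction}),
\end{equation*}
or, more usefully, the linear three-term version $\mu^{t\oplus t'}(q) = q^{-1}[3]_q\,\mu^{t}(q)\mu^{t'}(q) - \mu^{t\ominus t'}(q)$ that mirrors the Vieta/trace formula \eqref{TraceEq}. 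Since by Theorem~\ref{ExtUniq} the solution to \eqref{qMarEq} with a prescribed $q=1$ specialization is unique, and since $\mu^t(1)=m^t$ by the classical Theorem~\ref{ProffThm}, establishing that $\mu^t(q)$ obeys this recurrence together with the correct base cases forces $\mu^t(q) = m^t_q$.

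The concrete steps, in order: First I would pin down the base cases, checking directly that $\mu^t(q)$ for the smallest graphs gives $1_q = 1$, $2_q = q+q^{-1}$, $5_q = q^2+q+1+q^{-1}+q^{-2}$, using the explicit small $\mathcal{G}_t(q)$ drawn in the Example after the definition of weighted snake graphs; the single box has matchings of weight $q\cdot q^{-1}=1$ (the two horizontal edges — wait, one must be careful which edges carry weights) so that the two matchings of the two-box snake sum to the right Laurent polynomial. Second, and this is the heart, I would set up the snake-graph grafting/resolution: the snake graph $\mathcal{G}_{t\oplus t'}$ is obtained by concatenating $\mathcal{G}_t$ and $\mathcal{G}_{t'}$ along a shared edge (this is the standard ``snake graph of a sum'' operation from \cite{CS,CR}), and I would decompose perfect matchings of $\mathcal{G}_{t\oplus t'}$ according to whether the gluing edge is used. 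This yields a relation expressing $\mu^{t\oplus t'}$ in terms of $\mu^t$, $\mu^{t'}$ and the matchings of $\mathcal{G}_{t\ominus t'}$ — the classical skein/resolution identity for snake graphs — and I must track how the $q$-weights on the Western and Southern borders behave under concatenation. The delicate bookkeeping: when two snakes are glued, the border edges of the second piece may shift their parity, so the alternating $q^{\pm1}$ pattern has to be renormalized, and the overall power of $q$ factored out must come out to exactly match the $q^{-1}[3]_q$ prefactor and the $(q-1)(q^{-1}-1)$ correction in \eqref{qMarEq}. Third, I would invoke the uniqueness of Theorem~\ref{ExtUniq}: once $\mu^t(q)$ is shown to solve \eqref{qMarEq} with the correct specialization, it must equal $m^t_q$.

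An alternative route, which I would use as a cross-check or even as the main argument if the direct grafting is too fiddly, goes through Cohn matrices: there is a classical transfer-matrix interpretation in which the number of perfect matchings of a snake graph is computed as a product of $2\times 2$ matrices, one per box/step, read off the Christoffel word $\tilde w_t(A,B)$. I would $q$-deform this transfer matrix so that its entries record the weighted matching counts, identify the building blocks with (conjugates of) the $q$-Cohn matrices $A(n)_q, B(n)_q$ of Example~\ref{UsefulEx}, and then read off $\mu^t(q)$ from a single entry or combination of entries of $\tilde w_t(A(n)_q, B(n)_q)$; Lemma~\ref{Trace_coeffs_relation} then relates that entry to $\Tr$, which by Theorem~\ref{CohnShiftThm} equals $q^{-1}[3]_q\,m^t_q$, closing the loop. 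The main obstacle either way is the \textbf{weight bookkeeping at the gluing/boundary}: verifying that the alternating $q^{-1},q,\dots$ pattern on the Western and Southern borders, after the shifts induced by concatenating subgraphs, produces exactly the monomial prefactors demanded by \eqref{qMutEq}/\eqref{TraceEq}, with no stray power of $q$. I expect this to require a careful case analysis on the last letter ($A$ versus $B$, and ``initial'' versus interior occurrences) of the Christoffel word, using the palindromicity $w = X\pi Y$ recalled in Section~\ref{TDef} to control the two ends simultaneously.
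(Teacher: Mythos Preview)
Your ``alternative route'' via transfer matrices is precisely the paper's argument, and the paper does not attempt your primary skein/grafting strategy. Concretely: writing $\tilde w_t = Aw_1\cdots w_s B$ and letting $\mu_i$ be the weighted matching count of the first $i$ boxes, the paper checks by hand that attaching an $A$-piece acts on the column vector $(\mu_{i-1},\mu_i)^\top$ by left multiplication by $A(1)_q$, and attaching a $B$-piece acts by $B(1)_q$ --- no conjugation is needed, the transfer matrices \emph{are} the $q$-Cohn matrices for $n=1$. Two short boundary identities absorb the initial pair $(\mu_1,\mu_2)$ and the final one-box attachment into a further factor of $B(1)_q$ on the right and $A(1)_q$ on the left; palindromicity of $w_1\cdots w_s$ then reverses the product order so that the full expression becomes $\mu^t(q)=(q^{-1}-q^{-2})\,c_{11}+q^{-1}c_{12}$, where $C_t=(c_{ij})=\tilde w_t\bigl(A(1)_q,B(1)_q\bigr)$ is the $q$-Cohn matrix. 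Lemma~\ref{Trace_coeffs_relation} converts exactly this linear combination of entries into $\Tr(C_t)/(q^{-1}[3]_q)$, and Theorem~\ref{CohnShiftThm} finishes. The ``weight bookkeeping'' you worried about is thus confined to those two boundary identities and is a two-line check.

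Your primary plan --- establishing the three-term recurrence $\mu^{t\oplus t'}=q^{-1}[3]_q\,\mu^t\mu^{t'}-\mu^{t\ominus t'}$ combinatorially by grafting snake graphs --- is a genuinely different, more directly combinatorial attack. It may well be workable, but note that the natural snake-graph resolution identities are Ptolemy-type (quadratic), and here the quadratic Vieta relation coming from~\eqref{qMarEq} reads $\mu^{t\oplus t'}\mu^{t\ominus t'}=(\mu^t)^2+(\mu^{t'})^2-(q-1)(q^{-1}-1)$, so a pure grafting identity would have to manufacture that inhomogeneous constant --- unusual for a matching count and not something the standard Canakci--Schiffler skein relations give you for free. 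The transfer-matrix route sidesteps both this and the boundary-parity bookkeeping you correctly flagged as the obstacle.
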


Our proof, given in the next section, will follow that of Theorem 7.12 of~\cite{Aig}.

\subsection{Proof of the main theorem}

Fix a rational $t \in\ ]0,1[$ and the corresponding Christoffel word 
$$
\tilde w_t = Aw_1\cdots w_sB,
$$ 
with letters $w_i \in \{A, B\}$.
Let $n$ be the number of boxes in the snake graph $\mathcal{G}_t(q)$. 
We count the boxes travelling along the snake from left to right and bottom to top.

For each $i\in \{1,\cdots,n\}$, let us denote by $\mu_i$ 
the weighted number of perfect matchings of the partial snake graph containing the first $i$ boxes. 
It is convenient to label the $i$-th box with the value $\mu_i$.
In particular, for the initial snake graph one has 

\begin{center}
\begin{tikzpicture}[scale=0.8]
    \begin{scope}
    \draw[line width=2pt,red] (0,0)-- (1,0) node[below,midway] {$q$};
    \draw[line width=2pt,blue] (1,0)-- (2,0) node[below,midway] {$q^{-1}$};
    \draw[line width=2pt,blue] (0,0)-- (0,1) node[left,midway] {$q^{-1}$};
    \draw[line width=0.7pt] (0,1)--(2,1)--(2,0);
    \draw[line width=0.7pt] (1,1)--(1,0);
    \node at(0.5,0.5) {$\mu_1$};
      \node at(1.5,0.5) {$\mu_2$};
    \end{scope}
\end{tikzpicture}
\end{center}

\noindent 
with 
\begin{equation}
\label{Mu1Mu2}
\mu_1 = q+q^{-1},
\qquad\qquad
\mu_2= q+ q^{-1}+q^{-2}.
\end{equation}
Here
$\mu_1$ is easily obtained by enumerating the 2 perfect matchings on a single box and 
$\mu_2$ is obtained by enumerating the 3 perfect matchings on the graph with two boxes
(see example of Fig.~\ref{PerFig}). 

We can compute recursively $\mu_i$ in the snake graph $\mathcal{G}_t(q)$, 
looking at how it evolves through the addition of a type $A$ piece or a type $B$ piece of graph (
see Section~\ref{WeghtedSnakeS} and~\cite{Aig}, p.146). \\

\noindent \underline{Attachment of an $A$-piece} 
\begin{center}
\begin{tikzpicture}[scale=0.8]
    \draw[line width=2pt,red] (0,0)-- (1,0) node[below,midway] {$q$};
    \draw[line width=2pt,blue] (1,0)-- (1.95,0) node[below,midway] {$q^{-1}$};
    \draw[line width=0.7pt] (0,0)-- (0,1);
    \draw[line width=0.7pt] (0,1)--(1.95,1);
      \draw[line width=0.7pt] (1.95,1)--(1.95,0);
    \draw[line width=0.7pt] (1,1)--(1,0);
    \node at(0.5,0.5) {$\a_1$};
    \node at(1.5,0.5) {$\a_2$};

    \draw[line width=2pt,red] (2,0)-- (3,0) node[below,midway] {$q$};
    \draw[line width=2pt,blue] (3,0)-- (4,0) node[below,midway] {$q^{-1}$};
    \draw[line width=0.7pt] (2,0)-- (2,1);
    \draw[line width=0.7pt] (2,1)--(4,1)--(4,0);
    \draw[line width=0.7pt] (3,1)--(3,0);
    \node at(2.5,0.5) {$\b_1$};
    \node at(3.5,0.5) {$\b_2$};
\end{tikzpicture}
\end{center}

\noindent 
Then 
$$
\b_1 = \a_2 + q\a_1,
\qquad\qquad
\b_2 = \b_1 + q^{-1}\a_2,
$$
which can be rewritten as 
$$
\begin{pmatrix}
   \b_1\\[2pt]
   \b_2\\
\end{pmatrix}
=
\begin{pmatrix}
    q & 1\\[2pt]
    q & 1+q^{-1}\\
\end{pmatrix}
\begin{pmatrix}
   \a_1\\[2pt]
   \a_2\\
\end{pmatrix}.
$$
Observe that the matrix 
$\begin{pmatrix}
    q & 1\\[2pt]
    q & 1+q^{-1}\\
\end{pmatrix}$
is nothing but the $q$-deformed Cohn matrix $A(1)_q$, already calculated in Section~\ref{qCohnM}.\\

\noindent \underline{Attachment of a $B$-piece (except for the final one)}

\begin{center}
\begin{tikzpicture}[scale=0.8]
     \draw[line width=2pt,red] (0,0)-- (1,0) node[below,midway] {$q$};
    \draw[line width=2pt,blue] (1,0)-- (1.95,0) node[below,midway] {$q^{-1}$};
    \draw[line width=0.7pt] (0,0)-- (0,1);
    \draw[line width=0.7pt] (0,1)--(1.95,1);
      \draw[line width=0.7pt] (1.95,1)--(1.95,0);
    \draw[line width=0.7pt] (1,1)--(1,0);
    \node at(0.5,0.5) {$\a_1$};
    \node at(1.5,0.5) {$\a_2$};

    \draw[line width=2pt,red] (2,0)--(3,0) node[below,midway] {$q$};
    \draw[line width=0.7pt] (2,0)--(2,1);
    \draw[line width=2pt,red] (2,1)--(2,2) node[left,midway] {$q$};
    \draw[line width=2pt,blue] (2,2)--(2,3) node[left,midway] {$q^{-1}$};
    \draw[line width=0.7pt] (3,0)--(3,3);
    \draw[line width=0.7pt] (2,3)--(4,3);
    \draw[line width=0.7pt] (2,1)--(3,1);
    \draw[line width=0.7pt] (2,2)--(3,2);
    \draw[line width=2pt,blue] (3,2)--(4,2) node[below,midway] {$q^{-1}$};
    \draw[line width=0.7pt] (4,2)--(4,3);
    \node at(2.5,2.5) {$\b_1$};
    \node at(3.5,2.5) {$\b_2$};
    
        \node at(2.5,1.5) {$\g_2$};
    \node at(2.5,0.5) {$\g_1$};
\end{tikzpicture}
\end{center}
One easily obtains the following relations between the weighted number of perfect matchings:
\begin{eqnarray*}
\g_1&=& \a_2+q\a_1\\[4pt]
\g_2&=& \g_1+q^2\a_1\\[4pt]
\b_1&=& \g_2+q^{-1}\g_1\\[4pt]
\b_2&=& \b_1+q^{-2}\g_1
\end{eqnarray*}
from which one deduces
\begin{eqnarray*}
\b_1&=& \left(q^2+q+1\right)\a_1+\left(1+q^{-1}\right)\a_2\\[4pt]
\b_2&=& \left(q^2+q+1+q^{-1}\right)\a_1+ \left(1+q^{-1}+q^{-2}\right)\a_2
\end{eqnarray*}
which again can be rewritten as 
$$
\begin{pmatrix}
   \b_1\\
   \b_2\\
\end{pmatrix}
=
B(1)_q
\begin{pmatrix}
   \a_1\\
   \a_2\\
\end{pmatrix},
$$
where $B(1)_q$ is the $q$-deformed Cohn Matrix companion to $A(1)_q$.\\

\noindent \underline{Attachment of the final $B$-piece}
\begin{center}
\begin{tikzpicture}[scale=0.8]
    \draw[line width=2pt,red] (0,0)-- (1,0) node[below,midway] {$q$};
    \draw[line width=2pt,blue] (1,0)-- (1.95,0) node[below,midway] {$q^{-1}$};
    \draw[line width=0.7pt] (0,0)-- (0,1);
    \draw[line width=0.7pt] (0,1)--(1.95,1);
      \draw[line width=0.7pt] (1.95,1)--(1.95,0);
    \draw[line width=0.7pt] (1,1)--(1,0);
    \node at(0.5,0.5) {$\a_1$};
    \node at(1.5,0.5) {$\a_2$};

    \draw[line width=2pt,red] (2,0)--(3,0) node[below,midway] {$q$};
    \draw[line width=0.7pt] (2,0)--(2,1);
    \draw[line width=0.7pt] (2,1)--(3,1);
    \draw[line width=0.7pt] (3,1)--(3,0);
    \node at(2.5,0.5) {$\mu_n$};
\end{tikzpicture}
\end{center}

\noindent 
In this case, we have
$$
\mu_n = 
\begin{pmatrix} 
q &1
\end{pmatrix} 
\begin{pmatrix}
  \a_1\\[2pt]
   \a_2\\
\end{pmatrix}.
$$
We conclude that the weighted number of perfect matchings of $\mathcal{G}_t(q)$ is given by 
$$
\mu_n=
\begin{pmatrix} 
q &1
\end{pmatrix} 
M_s\cdots M_1
\begin{pmatrix}
  \mu_1\\[2pt]
   \mu_2\\
\end{pmatrix},
$$
where for $k\in \{1,\cdots,s\}$, 
$$
M_k = \begin{cases}
A(1)_q, \; \text{ if } w_k = A,\\[4pt]
B(1)_q, \; \text{ if } w_k = B.\\
\end{cases}
$$
and where $\mu_1$ and $\mu_2$ are given by~\eqref{Mu1Mu2}.

Observing now that 
$$
\begin{pmatrix}
  \mu_1\\[2pt]
   \mu_2\\
\end{pmatrix}
=
B(1)_q
\begin{pmatrix}
q^{-1}-q^{-2}\\[4pt]
q^{-1}\\
\end{pmatrix},
\qquad\hbox{and}\qquad
\begin{pmatrix} 
q &1
\end{pmatrix} 
=
\begin{pmatrix} 
1 &0
\end{pmatrix} 
A(1)_q,
$$
and using the fact that $w_1\cdots w_s$ is a palindrome, we get
$$
\mu_n=
\begin{pmatrix} 
1 &0
\end{pmatrix} 
A(1)_q\,M_1\cdots M_s\,B(1)_q
\begin{pmatrix}
 q^{-1}-q^{-2}\\[4pt]
q^{-1}\\
\end{pmatrix}.
$$
The matrix 
$$
C_t = A(1)_qM_1\cdots M_s B(1)_q=:(m_{ij})
$$
is precisely the Cohn $q$-matrix corresponding to the Markov number $m^t_q$, therefore
$$
\mu_n  = 
\begin{pmatrix} 
1 &0
\end{pmatrix} 
M_t
\begin{pmatrix}
 q^{-1}-q^{-2}\\[4pt]
q^{-1}\\
\end{pmatrix}
=
\begin{pmatrix}
q^{-1}-q^{-2} & q^{-1}
\end{pmatrix} 
\begin{pmatrix}
m_{11} \\[4pt]
m_{12}\\
\end{pmatrix}= \left(q^{-1}-q^{-2}\right)m_{11} + q^{-1}m_{12}.
$$
Applying Lemma \ref{Trace_coeffs_relation}, we finally get
$$
\mu^t(q)=\mu_n =\frac{\Tr(C_t)}{\qth} = m^t_q,
$$
as desired.

\subsection{Properties of the $q$-Markov polynomials}\label{ShapeS}

In this section, we prove Corollary~\ref{PropertThm} and Proposition~\ref{PalindProp}.

Part~(i).
Given a $q$-Markov number $m^t_q$, that is, a Laurent polynomial in~$q$.
It follows from Theorem~\ref{SnakeThmBis}, that
every coefficient of this polynomial counts perfect matchings with some fixed weight.
Therefore this coefficient is non-negative.

Part~(ii).
For every snake graph $\G_t$ corresponding to a Markov number $m^t$,
there exists a unique perfect matching of maximal (or minimal) weight.
Indeed, choosing all edges of weight~$q$ (red edges),
it is easy to see that
there exists a unique way to complete them as a perfect matching.
Therefore, the Laurent polynomial $m^t_q$ has highest (and lowest) order coefficient~$1$.

Part~(iii).
The palindromic property $m^t_{q^{-1}}=m^t_q$ follows directly from~\eqref{qMutEq}
and Proposition~\ref{RecProp}.
Indeed, transformations~\eqref{qMutEq} do not affect the palindromic property, which obviously 
holds for the initial triple.

Proposition~\ref{PalindProp}.
It was proved in~\cite[Thm. 7.3]{Ogu} that $\Tr(C_t)/[3]_q$ is given by the rank polynomial
of a circular fence poset, and these polynomials were proved to be unimodal  in~\cite[Corollary 7.4]{OgRa},
except in some exceptional case.
In the case of Cohn matrices the only possible exception corresponds to the matrix $C_{\frac11}$, i.e., the $q$-Markov number $2_q$.

\subsection{The weighted uniqueness conjecture}\label{InjSec}

Let us now prove Corollary~\ref{UniConj}.
The proof consists of calculating an explicit formula to recover the rational number corresponding to
a $q$-Markov number.

\begin{lem}
\label{RatLem}
Let $m_q^t=q^d+\a q^{d-1}+\cdots+\a q^{1-d}+q^{-d}$ be a $q$-Markov number.
The corresponding rational label is expressed in terms of
the degree of $m_q^t$ and  its coefficient of the term of degree $d-1$:
$$
t=\frac{d-\a}{\a+1}.
$$
\end{lem}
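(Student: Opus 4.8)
The plan is to extract the rational label $t$ from the combinatorial description of the snake graph $\mathcal{G}_t$, using the formula for $\mu^t(q)$ obtained in the proof of Theorem~\ref{SnakeThmBis}. Recall that $t=\frac{\#Y\text{'s in }w_t}{\#X\text{'s in }w_t}$, and that under the substitution $A=X$, $B=XY$, each occurrence of $A$ in $\tilde w_t$ contributes one $X$, while each occurrence of $B$ contributes one $X$ and one $Y$. Hence if $\tilde w_t=Aw_1\cdots w_sB$ contains $k$ letters equal to $B$ (including the final one) and $\ell$ letters equal to $A$, then $\#X=k+\ell$ and $\#Y=k$, so $t=\frac{k}{k+\ell}$. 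The length of $\tilde w_t$ is $k+\ell+1$ if we also count... more carefully: $\tilde w_t$ has $s+2$ letters total, of which some number are $A$ and some are $B$; writing $p=\#\{i:w_i\text{ or endpoint}=A\}$ and $r=\#\{\text{those}=B\}$ with $p+r=s+2$, one gets $t=r/(s+2)$.

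First I would pin down the degree $d$ and the subleading coefficient $\alpha$ of $m^t_q$ purely in terms of $p$ and $r$ (equivalently, in terms of how many $A$- and $B$-pieces the snake has). The degree $d$ is the number of western-plus-southern weight-$q$ edges in the unique maximal-weight perfect matching, which was identified in the proof of Corollary~\ref{PropertThm}(ii): the red edges. Counting red edges piece by piece — an interior $A$-piece contributes one red horizontal edge, an interior $B$-piece contributes two red edges (one horizontal on the south, one vertical on the west), the initial $A$-piece contributes one, the final $B$-piece contributes one — gives $d$ as an explicit linear function of $p$ and $r$. Similarly, the coefficient $\alpha$ of $q^{d-1}$ counts perfect matchings whose total $q$-weight exceeds the minimum by exactly one: these are obtained from the maximal matching by a single ``flip'' at one box, and a short case analysis shows the number of admissible flips is again an explicit linear function of $p$ and $r$ (each box where such a flip lowers the weight by exactly one unit contributes $1$).

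The key step is then to solve the resulting two linear relations $d=d(p,r)$, $\alpha=\alpha(p,r)$ for $p$ and $r$, and check that $r/(p+r-1)$ — or whatever the correct normalization of $t$ turns out to be — simplifies to $\frac{d-\alpha}{\alpha+1}$. Concretely I expect $d=r+1$ (or a close variant) and $\alpha=p+r-1$ type relations, which would immediately give $d-\alpha=1-p+\text{const}$ and $\alpha+1=p+r$, and matching these against $t=r/(p+r)$ fixes the constants. To keep the bookkeeping honest I would verify the formula directly on the small cases already tabulated in the introduction: $5_q$ has $d=2$, $\alpha=1$, giving $t=\tfrac12$; $13_q$ has $d=3$, $\alpha=2$, giving $t=\tfrac13$; $29_q$ has $d=4$, $\alpha=5$, giving $t=-\tfrac1{6}$, which is wrong, so the naive reading of $\alpha$ as ``coefficient of $q^{d-1}$'' must instead be read with the indexing in the lemma statement (the coefficient listed there as the one of degree $d-1$, i.e. the \emph{second} coefficient from the top, which for $29_q$ is $2$, giving $t=\tfrac23$). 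This confirms $\alpha$ is the second-highest coefficient and the arithmetic works.

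The main obstacle will be the case analysis for $\alpha$: determining exactly which boxes of $\mathcal{G}_t(q)$ admit a weight-decreasing flip of magnitude precisely $q^{-1}$ (rather than a larger power), and confirming that no box contributes more than one and that distinct single-flips give distinct matchings with distinct... no, they can coincide in weight but are distinct matchings, so they each add $1$ to the coefficient. I would organize this by the local picture around each box determined by the two letters of $\tilde w_t$ straddling it — the four patterns $AA$, $AB$, $BA$, $BB$ — together with the boundary pieces, and tabulate the weight change of the elementary flip in each. Alternatively, and perhaps more cleanly, one can avoid the matching combinatorics entirely: using $m^t_q=\Tr(C_t)/(q^{-1}[3]_q)$ and the explicit product $C_t=A(1)_q M_1\cdots M_s B(1)_q$, track the top two coefficients of the Laurent polynomial $\Tr(C_t)$ through the matrix product, since each $A(1)_q$ and $B(1)_q$ shifts the $q$-degree in a controlled way. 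This is the route I would actually take, reducing the lemma to a rank-one perturbation computation on the leading terms of the Cohn matrix entries, and I expect the palindromicity from Corollary~\ref{PropertThm}(iii) to make the two endpoints of the argument symmetric, halving the work.
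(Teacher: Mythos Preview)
Your overall strategy --- read off $d$ and $\alpha$ from the snake-graph matching model and invert to recover $t$ --- is exactly what the paper does, but you are making the bookkeeping harder than necessary, and neither of your two proposed routes is carried to completion. The paper works with the word $w_t$ in $X,Y$ rather than $\tilde w_t$ in $A,B$, and with the \emph{minimal} matching rather than the maximal one. Then $d$ is simply the number of $q^{-1}$-weighted edges, which equals the number of boxes in Cohn's snake, i.e.\ $(\#X+\#Y)-1$; no piece-by-piece accounting is needed. For $\alpha$, by palindromicity one counts matchings of weight $q^{1-d}$: such a matching must contain all $q^{-1}$ edges but one, and the only way to do this is to drop a \emph{horizontal} $q^{-1}$ edge (flip its box to the two internal vertical edges, both of weight~$1$). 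There are exactly $\#X-1$ horizontal $q^{-1}$ edges, so $\alpha=\#X-1$, and $t=\#Y/\#X=(d-\alpha)/(\alpha+1)$ follows in one line.

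Your four-pattern case analysis on $AA,AB,BA,BB$ and the alternative matrix-trace route would both eventually succeed, but neither is needed. The one genuine point your ``main obstacle'' paragraph is circling is why vertical $q^{-1}$ edges on the western boundary do not contribute to $\alpha$: in the minimal matching the box to the right of such an edge does not have both its verticals selected, so no single-box flip is available there, and any non-local modification drops further $q^{-1}$ edges or introduces a $q$ edge. Incidentally, your guessed relation $d=r+1$ is wrong (in your notation $d=p+2r-1$), though $\alpha=p+r-1$ happens to be correct --- another sign that counting directly in $X,Y$ is the cleaner route.
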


\begin{proof}
There exists exactly one perfect matching of $\G_t$ with weight $q^{-d}$, 
namely ``the minimal matching''  consisting of all boundary edges of weight $q^{-1}$ 
completed (in a unique way) with boundary edges of weight~$1$. 
One has:
$$
d={\# \text{boxes in Cohn's snake}},
$$
that is, the number of letters in the Christoffel word decreased by $1$, i.e.  $d= ({\# X} + {\# Y\text{'s in } w_t})-1$.

We know, by palindromicity, that $\a$ is also the coefficient of the term of degree $1-d$, 
hence the number of perfect matching of $\G_t$ of weight $q^{1-d}$. 
Such a perfect matching is obtained by  taking all edges of weight $q^{-1}$, but one. 
This is possible only by removing in the minimal matching a horizontal edge of weight $q^{-1}$ with the opposite boundary edge sharing the same box, and replacing these two by the two internal vertical edges of the box. There are $(\# X\text{'s}-1)$ horizontal edges of weight $q^{-1}$, therefore $\a=({\# X\text{'s in } w_t})-1$. 

Finally one gets
$$
t=\frac{\# Y\text{'s in } w_t}{\# X\text{'s in } w_t} = \frac{d-\a}{\a+1}.
$$
Hence the lemma.
\end{proof}

Corollary~\ref{UniConj} follows.

\section{Further directions}\label{ExtrSec}

We finish this article with miscellaneous observations that did not fit into the main core
but constitute subjects for future research.

\subsection{Through the looking glass: sixing of the tree~\label{Mirror}}

Our first observation concerns the classical Markov numbers and Cohn matrices.

The usual way to represent the Markov numbers in the form of a tree
uses a binary tree growing in one direction; see Fig.~\ref{fig:Markov Conway}.
This tree starts from the Markov triple $(1, 2, 5)$.
However, the Markov numbers tree can be extended infinitely in all directions, 
 which leads to what is effectively $6$ copies of the Markov numbers tree, 
 starting instead from the initial triple $(1, 1, 1)$.
 We find this ``sixing'' of the tree quite relevant,
 because it better illustrates the role of Cohn matrices and extends Aigner's classification of them~\cite{Aig}.
 
\begin{figure}[H]
\begin{center}
\begin{tikzpicture}[scale=0.75, every node/.style={scale=0.75}]
    \node at (1.75,0.75) {$1$};
    \node at (-1.75,0.75){$1$};
    \node at (0,4) {$1$};
    \node at (3.11, 3.8) {$2$};
    \node at (2.20, 5.76) {$5$}; 
    \node at (4.37, 1.98) {$5$}; 
    \node at (-3.11, 3.8) {$2$};
    \node at (-2.20, 5.76) {$5$}; 
    \node at (-4.37, 1.98) {$5$}; 

    \node at (0, -1.5) {$2$};
    \node at (3.11, -1.8) {$5$};
    \node at (-3.11, -1.8) {$5$};

    \draw[thick] (0, 0) -- (0, 2);

    \draw[thick] (0,2) -- (1.73,3);
        
        \draw[thick] (1.73, 3) -- (1.99, 4.48);	

            \draw[thick] (1.99, 4.48) -- (1.31, 5.53);                
            \draw[thick] (1.99, 4.48) -- (2.98, 5.24);

        \draw[thick] (1.73, 3) -- (3.14, 2.49);

            \draw[thick] (3.14, 2.49) -- (4.29, 2.97);            
            \draw[thick] (3.14, 2.49) -- (3.72, 1.38);
                
    \draw[thick] (0,2) -- (-1.73,3);
    
        \draw[thick] (-1.73, 3) -- (-1.99, 4.48);	
    
            \draw[thick] (-1.99, 4.48) -- (-1.31, 5.53);    
            \draw[thick] (-1.99, 4.48) -- (-2.98, 5.24);	
    
        \draw[thick] (-1.73, 3) -- (-3.14, 2.49);
    
            \draw[thick] (-3.14, 2.49) -- (-4.29, 2.97);   
            \draw[thick] (-3.14, 2.49) -- (-3.72, 1.38);

    \draw[thick] (0, 0) -- (1.73, -1);
        \draw[thick] (1.73, -1) -- (1.99, -2.48);
        \draw[thick] (1.73, -1) -- (3.14, -0.49);

    \draw[thick] (0, 0) -- (-1.73, -1);
        \draw[thick] (-1.73, -1) -- (-1.99, -2.48);
        \draw[thick] (-1.73, -1) -- (-3.14, -0.49);
    
\end{tikzpicture}
\caption{Markov numbers on the Conway topograph, in all directions.}
\label{fig:Markov Conway ALL}
\end{center}
\end{figure}
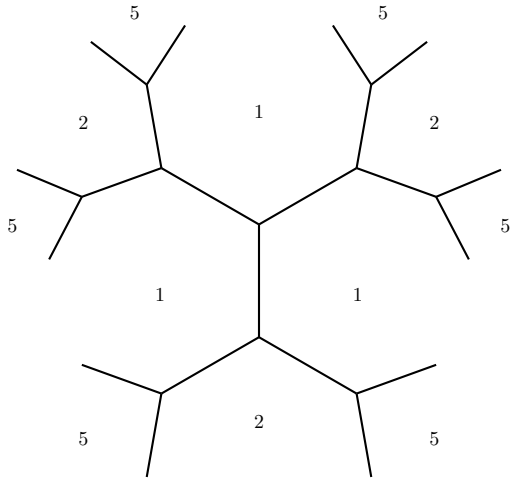

In order to replicate this in the Cohn matrix case, we need to ``glue'' together copies of the tree.
The result of this procedure leads to the following ``complete Cohn tree'',
where $A(n)$ and $B(n)$ are as in~\eqref{Cohn}.

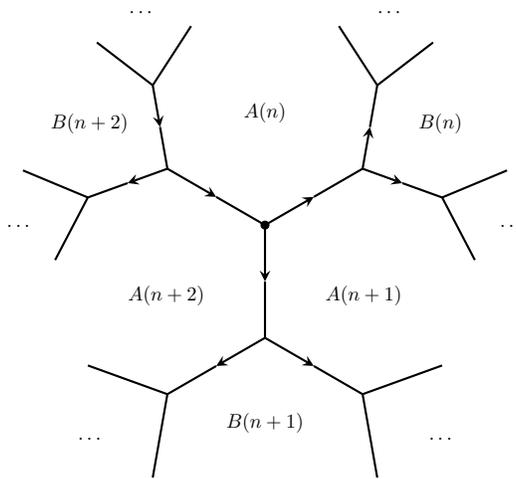
\begin{figure}[H]
\begin{center}
\begin{tikzpicture}[scale=0.75, every node/.style={scale=0.75}]
    \node at (1.75,0.75) {$A(n+1)$};
    \node at (-1.75,0.75){$A(n+2)$};
    \node at (0,4) {$A(n)$};
    \node at (3.11, 3.8) {$B(n)$};
    \node at (2.20, 5.76) {$\cdots$}; 
    \node at (4.37, 1.98) {$\cdots$}; 
    \node at (-3.11, 3.8) {$B(n+2)$};
    \node at (-2.20, 5.76) {$\cdots$}; 
    \node at (-4.37, 1.98) {$\cdots$}; 

    \node at (0, -1.5) {$B(n+1)$};
    \node at (3.11, -1.8) {$\cdots$};
    \node at (-3.11, -1.8) {$\cdots$};

    \filldraw (0, 2) circle (2pt);

    \draw[thick] (0, 0) -- (0, 1);
    \draw[thick,  <-, >=stealth] (0, 1) -- (0, 2);

    \draw[thick,  ->, >=stealth] (0,2) -- (0.865,2.5);
    \draw[thick] (0.865,2.5) -- (1.73,3);
        
        \draw[thick,  ->, >=stealth] (1.73, 3) -- (1.86, 3.74);
        \draw[thick,] (1.86, 3.74) -- (1.99, 4.48);	

            \draw[thick,  ] (1.99, 4.48) -- (1.31, 5.53);                
            \draw[thick,] (1.99, 4.48) -- (2.98, 5.24);

        \draw[thick,  ->, >=stealth] (1.73, 3) -- (2.435, 2.745);
        \draw[thick,] (2.435, 2.745) -- (3.14, 2.49);

            \draw[thick,] (3.14, 2.49) -- (4.29, 2.97);            
            \draw[thick,] (3.14, 2.49) -- (3.72, 1.38);
                
    \draw[thick] (0,2) -- (-0.865,2.5);
    \draw[thick,  <-, >=stealth] (-0.865,2.5) -- (-1.73,3);
    
        \draw[thick, ] (-1.73, 3) -- (-1.86, 3.74);
        \draw[thick,  <-, >=stealth] (-1.86, 3.74) -- (-1.99, 4.48);
    
            \draw[thick, ] (-1.99, 4.48) -- (-1.31, 5.53);    
            \draw[thick, ] (-1.99, 4.48) -- (-2.98, 5.24);
    
        \draw[thick,  ->, >=stealth] (-1.73, 3) -- (-2.435, 2.745);
        \draw[thick, ] (-2.435, 2.745) -- (-3.14, 2.49);
    
            \draw[thick, ] (-3.14, 2.49) -- (-4.29, 2.97);   
            \draw[thick, ] (-3.14, 2.49) -- (-3.72, 1.38);

    \draw[thick,  ->, >=stealth] (0, 0) -- (0.865, -0.5);
    \draw[thick, ] (0.865, -0.5) -- (1.73, -1);
        \draw[thick, ] (1.73, -1) -- (1.99, -2.48);
        \draw[thick, ] (1.73, -1) -- (3.14, -0.49);

    \draw[thick,  ->, >=stealth] (0, 0) -- (-0.865, -0.5);
    \draw[thick, ] (-0.865, -0.5) -- (-1.73, -1);
        \draw[thick, ] (-1.73, -1) -- (-1.99, -2.48);
        \draw[thick, ] (-1.73, -1) -- (-3.14, -0.49);
    
\end{tikzpicture}
\caption{Cohn tree in all directions.}
\label{fig:Cohn Conway ALL General}
\end{center}
\end{figure}

\noindent
The tree is oriented, at every vertex of the tree there are exactly {\it one ingoing and two outgoing} edges.
Without this condition, the construction is not consistent.
The rule for constructing the tree and calculating the matrices in it is as in the usual case:
\begin{figure}[H]
\begin{center}
\begin{tikzpicture}[scale=0.8]
    \draw[thick,  ->, >=stealth] (1,1)-- (1,2.2);	
       \draw[thick,  ->, >=stealth] (1,2.2)-- (0,3);	
          \draw[thick,  ->, >=stealth] (1,2.2)-- (2,3);
         \node at (0.4, 1.6) {$M$}; 
          \node at (1.6, 1.6) {$N$}; 
          \node at (1, 2.9) {$MN$}; 
\end{tikzpicture}
\end{center}
\end{figure}

One can prove the following.
\begin{enumerate}

\item
The initial triple of Cohn matrices is $A(n),~A(n+1),~A(n+2)$ is the only possible, so that the tree at Fig.~\ref{fig:Cohn Conway ALL General}
is the most general one.

\item
This complete Cohn tree is consistent for any choice of orientation.

\end{enumerate}
The proof goes along the lines of the proof of Aigner's~\cite{Aig} Theorem 4.7, with only a minor modification,
and we do not go into the details.

We observe from the tree at Fig.~\ref{fig:Cohn Conway ALL General} that besides the thoroughly studied pair of Cohn matrices
$(A(n),B(n))$, several other pairs naturally appear.
For instance, the ``inverted pair'' $(B(n),A(n+1))$ which is not a part of Aigner's classification
and some other interesting cases, according to the choice of orientation.

\subsection{Combinatorial models for $q$-rationals}

In a Cohn matrix the Markov number appears twice, as the top right entry and as the third of the trace.
Therefore with the $q$-deformed Cohn matrix one obtains two $q$-deformations of the Markov number.
As we have seen in this article the $q$-deformation given by the trace is independent of the choice of the initial Cohn matrix.
This is not the case of the top right entry as one can immediately see in the examples of matrices $A(n)_q$ and $B(n)_q$
in Section~\ref{qCohnM} that the entries depend on $n$.

Fixing $n=1$, we will denote by $\tilde m ^t_q$ the top right entry of the Cohn matrix $M_t$ obtained from the initial $A(1)_q$ and $B(1)_q$.
The proof of Theorem~\ref{SnakeThmBis} can be easily adapted in order to obtain a combinatorial model for $\tilde m ^t_q$.

More precisely,  defining $\tilde\G_t(q)$ as the weighted graph obtained from $\G_t(q)$ by removing the weight $q$ of the first horizontal edge, one obtains:
\begin{prop}
The $q$-deformed Markov number $\tilde m ^t_q$ is the weighted number of perfect matchings of $\tilde\G_t(q)$.
\end{prop}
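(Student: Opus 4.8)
The plan is to replay the proof of Theorem~\ref{SnakeThmBis} essentially verbatim, altering only its initialization. Recall that in that argument the weighted count of $\G_t(q)$ is assembled box by box along the Christoffel word $\tilde w_t=Aw_1\cdots w_sB$, producing
$$
\mu_n=\begin{pmatrix}q&1\end{pmatrix}M_s\cdots M_1\begin{pmatrix}\mu_1\\ \mu_2\end{pmatrix},
\qquad
M_k=\begin{cases}A(1)_q,&\text{if }w_k=A,\\[2pt]B(1)_q,&\text{if }w_k=B,\end{cases}
$$
and that every recurrence defining the transition matrices $M_k$, as well as the final covector $\begin{pmatrix}q&1\end{pmatrix}$, involves only edges of pieces attached \emph{after} the first two boxes. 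Since $\tilde\G_t(q)$ differs from $\G_t(q)$ only in the weight of the first horizontal edge --- an edge of the very first box, which is never touched again by a later attachment --- the matrices $M_k$, the covector $\begin{pmatrix}q&1\end{pmatrix}$, and the identity $\begin{pmatrix}q&1\end{pmatrix}=\begin{pmatrix}1&0\end{pmatrix}A(1)_q$ all carry over unchanged. Only the two seed values $\mu_1,\mu_2$ must be recomputed.

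First I would redo the two base cases directly, as in Figure~\ref{PerFig}: with the first horizontal edge now of weight $1$ rather than $q$, the single box gives $\tilde\mu_1=1+q^{-1}$ and the two-box snake gives $\tilde\mu_2=1+q^{-1}+q^{-2}$. The crucial observation is then the identity
$$
\begin{pmatrix}\tilde\mu_1\\ \tilde\mu_2\end{pmatrix}
=\begin{pmatrix}q^{-1}[2]_q\\ q^{-2}[3]_q\end{pmatrix}
=B(1)_q\begin{pmatrix}0\\ 1\end{pmatrix},
$$
that is, the new seed vector is precisely the second column of $B(1)_q$, whereas in Theorem~\ref{SnakeThmBis} the seed was $B(1)_q\begin{pmatrix}q^{-1}-q^{-2}\\ q^{-1}\end{pmatrix}$ (its first column, which is what produces the trace). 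Feeding this in and using that $w_1\cdots w_s$ is a palindrome, exactly as before, gives
$$
\tilde\mu_n
=\begin{pmatrix}q&1\end{pmatrix}M_s\cdots M_1\,B(1)_q\begin{pmatrix}0\\ 1\end{pmatrix}
=\begin{pmatrix}1&0\end{pmatrix}A(1)_q\,M_1\cdots M_s\,B(1)_q\begin{pmatrix}0\\ 1\end{pmatrix}
=\begin{pmatrix}1&0\end{pmatrix}C_t\begin{pmatrix}0\\ 1\end{pmatrix},
$$
which is exactly the upper right entry $m_{12}=\tilde m^t_q$ of the Cohn matrix $C_t=A(1)_qM_1\cdots M_sB(1)_q$. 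In contrast with the trace computation, Lemma~\ref{Trace_coeffs_relation} is not needed here: the entry $m_{12}$ is read off directly.

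The only genuine work is checking the two base values $\tilde\mu_1,\tilde\mu_2$ and the small linear-algebra identity $\begin{pmatrix}\tilde\mu_1\\ \tilde\mu_2\end{pmatrix}=B(1)_q\begin{pmatrix}0\\ 1\end{pmatrix}$; neither is an obstacle. The one point worth stating carefully is that changing the weight of the first horizontal edge affects nothing beyond $\mu_1$ and $\mu_2$ --- but this is immediate from the box-by-box structure, since that edge lies in the first box and the recursion only ever manipulates the ``frontier'' edges of the boxes being glued on.
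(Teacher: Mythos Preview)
Your proof is correct and is precisely the adaptation the paper has in mind (the paper only states that ``the proof of Theorem~\ref{SnakeThmBis} can be easily adapted'' without giving details). One small expository slip: the parenthetical ``(its first column, which is what produces the trace)'' is misleading, since $B(1)_q\begin{pmatrix}q^{-1}-q^{-2}\\ q^{-1}\end{pmatrix}$ is not the first column of $B(1)_q$; the point is rather that the covector $\begin{pmatrix}q^{-1}-q^{-2}&q^{-1}\end{pmatrix}$ applied to $\begin{pmatrix}m_{11}\\m_{12}\end{pmatrix}$ recovers $\Tr(C_t)/(q^{-1}[3]_q)$ via Lemma~\ref{Trace_coeffs_relation}, whereas your new covector $\begin{pmatrix}0&1\end{pmatrix}$ extracts $m_{12}$ directly --- but this does not affect the argument.
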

\begin{ex}
For $t=\frac23$, the top right entry of the matrix  $A(1)_qB(1)_q^2$ is 
$$\tilde m ^t_q=q^3 + 3q^2 + 5q + 6+6q^{-1} + 5q^{-2} + 2q^{-3} + q^{-4} $$
and coincides with the weighted number of perfect matchings of the graph below.
\begin{figure}[H]
\begin{center}
\begin{tikzpicture}[scale=0.85, every node/.style={scale=0.75}]
    \begin{scope}[shift={(-2,0)}]
    \draw[line width=2pt,blue] (1,0)-- (2,0) node[below,midway] {$q^{-1}$};
    \draw[line width=2pt,blue] (0,0)-- (0,1) node[left,midway] {$q^{-1}$};
    \draw[line width=0.7pt] (0,1)--(2,1)--(2,0);
    \draw[line width=0.7pt] (1,1)--(1,0);
    \draw[line width=0.7pt] (0,0)-- (1,0);
    		\draw (0.5, 0.5) node[] {$2$};
		\draw (1.5, 0.5) node[] {$3$};
		\draw (2.5, 0.5) node[] {$5$};
		\draw (2.5, 1.5) node[] {$7$};
		\draw (2.5, 2.5) node[] {$12$};
		\draw (3.5, 2.5) node[] {$17$};
		\draw (4.5, 2.5) node[] {$29$};
    \end{scope}
    \begin{scope}
  \draw[line width=2pt,red] (0,0)--(1,0) node[below,midway] {$q$};
    \draw[line width=0.7pt] (0,0)--(0,1);
    \draw[line width=2pt,red] (0,1)--(0,2) node[left,midway] {$q$};
    \draw[line width=2pt,blue] (0,2)--(0,3) node[left,midway] {$q^{-1}$};
    \draw[line width=0.7pt] (1,0)--(1,3);
    \draw[line width=0.7pt] (0,3)--(2,3);
    \draw[line width=0.7pt] (0,1)--(1,1);
    \draw[line width=0.7pt] (0,2)--(1,2);
    \draw[line width=2pt,blue] (1,2)--(2,2) node[below,midway] {$q^{-1}$};
    \draw[line width=0.7pt] (2,2)--(2,3);
    \end{scope}
     \begin{scope}[shift={(2,2)}]
    \draw[line width=2pt,red] (0,0)--(1,0) node[below,midway] {$q$};
    \draw[line width=0.7pt] (0,0)--(0,1)--(1,1)--(1,0);
    \end{scope}

   \end{tikzpicture}
\end{center}
\caption{Weighted graph $\tilde \G_t(q)$ for  $t=\frac23$.}
\label{fig:tilde G}
\end{figure}

\end{ex}

More generally, in the $q$-deformed matrices of $\PSL(2,\Z)$ the top right entry corresponds to a numerator of a $q$-rational \cite{MGOfmsigma}. 
The above observation  leads to another combinatorial model for the $q$-rationals.
The details will be developed elsewhere.

\subsection{$F$-polynomials in the theory of cluster algebras}\label{CluSec}

The theory of cluster algebras by Fomin and Zelevinsky 
gives an excellent framework to study
Markov numbers and the Markov equation.
Various work on Markov numbers and (generalized) Markov equation have been produced in this framework, see e.g. \cite{BaGy, BBH, EVW, LLRS, Alfredo, Pro}.

Here we  introduce a 3-parameter deformation of Cohn matrices related to the $F$-polynomials of the Markov cluster algebra.

First we introduce some ingredients of the theory of cluster algebras without going deeply into details.
We refer to \cite{FZ4, FWZ} for an overview and the main definitions.

The {\it Markov cluster algebra} is built from the following quiver 
$$
\xymatrix{
&1\ar@<2pt>@{->}[rd]\ar@<-2pt>@{->}[rd]\ar@<-2pt>@{<-}[ld]\ar@<2pt>@{<-}[ld]&\\
3&&2\ar@<-2pt>@{->}[ll]\ar@<2pt>@{->}[ll]
}
$$
by applying the standard mutation rules for cluster algebras. In this situation, the mutation rule can be presented as a branching rule in the infinite binary tree.

More precisely, the Markov cluster variables are elements in $\Z_{>0}[x_1^{\pm 1}, x_2^{\pm 1}, x_3^{\pm 1}]$ obtained in the binary tree from the initial triple $(x_1,x_2,x_3)$ and following the  rule:
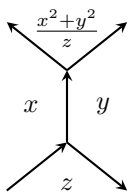
\begin{figure}[H]
\begin{center}
\begin{tikzpicture}[scale=0.8]
 \draw[thick,  ->, >=stealth] (0,0.2) -- (1,1);	
  \draw[thick,  ->, >=stealth] (1,1)-- (2,0.2);	
    \draw[thick,  ->, >=stealth] (1,1)-- (1,2.2);	
       \draw[thick,  ->, >=stealth] (1,2.2)-- (0,3);	
          \draw[thick,  ->, >=stealth] (1,2.2)-- (2,3);
      \node at (1,0.3) {$z$}; 
         \node at (0.4, 1.6) {$x$}; 
          \node at (1.6, 1.6) {$y$}; 
          \node at (1, 2.9) {$\frac{x^2+y^2}{z}$}; 
  \end{tikzpicture}
\caption{Branching rule for cluster variables (=mutation)}
\label{fig:fishCluster}
\end{center}
\end{figure}
We will hence label the Markov cluster variables with rational numbers and obtain the set $\{\X_t, \,t\in \Q\}$, so that $\X_{\frac01}=x_1$, $\X_{\frac10}=x_2$, $\X_{\frac{1}{-1}}=x_3$ are the initial cluster variables.
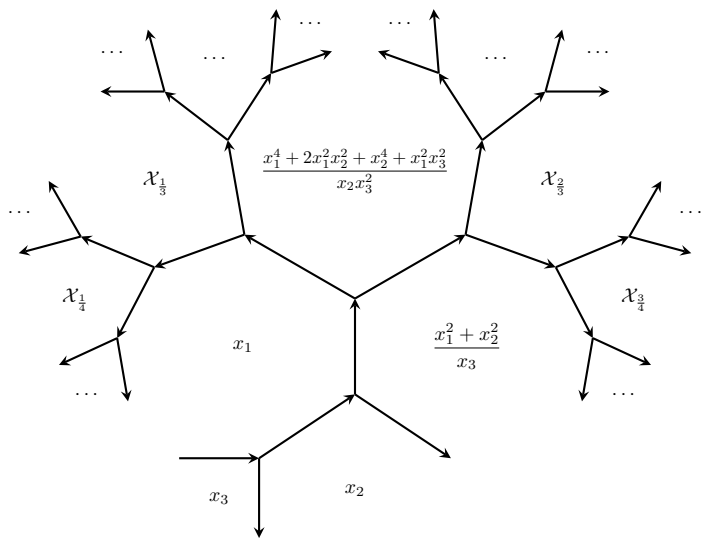
\begin{figure}[H]
\begin{center}
\begin{tikzpicture}[scale=0.85, every node/.style={scale=0.75}]
    \node at (1.75,1.25) {$\dfrac{x_1^2 + x_2^2}{x_3}$};
    \node at (-1.75,1.25){$x_1$};
      \draw (0,4) node[scale=0.9] {$\dfrac{x_1^4 + 2x_1^2x_2^2 + x_2^4 + x_1^2x_3^2}{x_2x_3^2}$};
     \node at (3.11, 3.8) {$\X_{\frac23}$};
    \node at (2.20, 5.76) {$\cdots$}; 
    \node at (4.37, 1.98) {$\X_{\frac34}$}; 
    \node at (0.7, 6.3) {$\cdots$}; 
    \node at (3.8, 5.85) {$\cdots$}; 
    \node at (5.25, 3.35) {$\cdots$}; 
    \node at (4.2, 0.5) {$\cdots$}; 
    \node at (-3.11, 3.8) {$\X_{\frac13}$};
    \node at (-2.20, 5.76) {$\cdots$}; 
    \node at (-4.37, 1.98) {$\X_{\frac14}$}; 
    \node at (-0.7, 6.3) {$\cdots$}; 
    \node at (-3.8, 5.85) {$\cdots$}; 
    \node at (-5.25, 3.35) {$\cdots$}; 
    \node at (-4.2, 0.5) {$\cdots$}; 
    
        \node at (0,-1){$x_2$};
            \node at (-2.125,-1.125){$x_3$};
 
    \draw[thick, ->, >=stealth] (0, 0.5) -- (0, 2);
    
        \draw[thick,  ->, >=stealth] (-1.5, -0.5)-- (0, 0.5);
        
           \draw[thick,  ->, >=stealth] (0, 0.5) -- (1.5, -0.5);
           
          \draw[thick, <-, >=stealth] (-1.5,-0.5) -- (-2.75, -0.5);
        \draw[thick, ->, >=stealth] (-1.5,-0.5) -- (-1.5, -1.75);

    \draw[thick,  ->, >=stealth] (0,2) -- (1.73,3);
        
        \draw[thick,  ->, >=stealth] (1.73, 3) -- (1.99, 4.48);	

            \draw[thick,  ->, >=stealth] (1.99, 4.48) -- (1.31, 5.53);
            
                \draw[thick,  ->, >=stealth] (1.31, 5.53) -- (0.36, 5.87);
                \draw[thick,  ->, >=stealth] (1.31, 5.53) --  (1.23, 6.53);
                
            \draw[thick,  ->, >=stealth] (1.99, 4.48) -- (2.98, 5.24);	
            
                \draw[thick,  ->, >=stealth] (2.98, 5.24) -- (3.24, 6.21);
                \draw[thick,  ->, >=stealth] (2.98, 5.24) -- (3.98, 5.24);

        \draw[thick,  ->, >=stealth] (1.73, 3) -- (3.14, 2.49);

            \draw[thick,  ->, >=stealth] (3.14, 2.49) -- (4.29, 2.97);

                \draw[thick,  ->, >=stealth] (4.29, 2.97) -- (4.79, 3.84);
                \draw[thick,  ->, >=stealth] (4.29, 2.97) -- (5.26, 2.71);
            
            \draw[thick,  ->, >=stealth] (3.14, 2.49) -- (3.72, 1.38);
                
                \draw[thick,  ->, >=stealth] (3.72, 1.38) -- (4.63, 0.96);
                \draw[thick,  ->, >=stealth] (3.72, 1.38) -- (3.55, 0.39);
                
    \draw[thick,  ->, >=stealth] (0,2) -- (-1.73,3);
    
        \draw[thick,  ->, >=stealth] (-1.73, 3) -- (-1.99, 4.48);	
    
            \draw[thick,  ->, >=stealth] (-1.99, 4.48) -- (-1.31, 5.53);
    
                \draw[thick,  ->, >=stealth] (-1.31, 5.53) -- (-0.36, 5.87);
                \draw[thick,  ->, >=stealth] (-1.31, 5.53) -- (-1.23, 6.53);
    
            \draw[thick,  ->, >=stealth] (-1.99, 4.48) -- (-2.98, 5.24);	
    
                \draw[thick,  ->, >=stealth] (-2.98, 5.24) -- (-3.24, 6.21);
                \draw[thick,  ->, >=stealth] (-2.98, 5.24) -- (-3.98, 5.24);
    
        \draw[thick,  ->, >=stealth] (-1.73, 3) -- (-3.14, 2.49);
    
            \draw[thick,  ->, >=stealth] (-3.14, 2.49) -- (-4.29, 2.97);
    
                \draw[thick,  ->, >=stealth] (-4.29, 2.97) -- (-4.79, 3.84);
                \draw[thick,  ->, >=stealth] (-4.29, 2.97) -- (-5.26, 2.71);
    
            \draw[thick,  ->, >=stealth] (-3.14, 2.49) -- (-3.72, 1.38);
    
                \draw[thick,  ->, >=stealth] (-3.72, 1.38) -- (-4.63, 0.96);
                \draw[thick,  ->, >=stealth] (-3.72, 1.38) -- (-3.55, 0.39);
                
  \end{tikzpicture}
\caption{Tree of cluster variables}
\label{fig:Markov cluster}
\end{center}
\end{figure}

In a more general setting \cite{FZ4}, cluster variables are elements of $\Z_{>0}[y_1, y_2, y_3][x_1^{\pm 1}, x_2^{\pm 1}, x_3^{\pm 1}]$. 
In this setting, one recovers the cluster variables $\X_t$ described above by specializing $y_1=y_2=y_3=1$, and one obtains the associated $F$-polynomials $F_t$ as elements of $\Z_{>0}[y_1, y_2, y_3]$ by specializing  $x_1=x_2=x_3=1$. 

Specializing all the variables simultaneously to 1 one obtains the Markov numbers
$$
m^t=\X_t(1,1,1)=F_t(1,1,1).
$$
\begin{figure}[H]
\begin{center}
\begin{tikzpicture}[scale=0.85, every node/.style={scale=0.75}]
                \node at (1.75,1.25) {$y_3+1$};
    \node at (-1.75,1.25){$1$};
      \draw (0,4) node[scale=1] {$y_2y_3^2 + 2y_2y_3 + y_2 + 1$};
     \node at (3.11, 3.8) {$F_{\frac23}$};
    \node at (2.20, 5.76) {$\cdots$}; 
    \node at (4.37, 1.98) {$F_{\frac34}$}; 
    \node at (0.7, 6.3) {$\cdots$}; 
    \node at (3.8, 5.85) {$\cdots$}; 
    \node at (5.25, 3.35) {$\cdots$}; 
    \node at (4.2, 0.5) {$\cdots$}; 
    \node at (-3.11, 3.8) {$F_{\frac13}$};
    \node at (-2.20, 5.76) {$\cdots$}; 
    \node at (-4.37, 1.98) {$F_{\frac14}$}; 
    \node at (-0.7, 6.3) {$\cdots$}; 
    \node at (-3.8, 5.85) {$\cdots$}; 
    \node at (-5.25, 3.35) {$\cdots$}; 
    \node at (-4.2, 0.5) {$\cdots$}; 
    
        \node at (0,-1){$1$};
            \node at (-2.125,-1.125){$1$};

    \draw[thick, ->, >=stealth] (0, 0.5) -- (0, 2);
    
        \draw[thick,  ->, >=stealth] (-1.5, -0.5)-- (0, 0.5);
        
           \draw[thick,  ->, >=stealth] (0, 0.5) -- (1.5, -0.5);
           
        \draw[thick, <-, >=stealth] (-1.5,-0.5) -- (-2.75, -0.5);
        \draw[thick, ->, >=stealth] (-1.5,-0.5) -- (-1.5, -1.75);

    \draw[thick,  ->, >=stealth] (0,2) -- (1.73,3);
        
        \draw[thick,  ->, >=stealth] (1.73, 3) -- (1.99, 4.48);	

            \draw[thick,  ->, >=stealth] (1.99, 4.48) -- (1.31, 5.53);
            
                \draw[thick,  ->, >=stealth] (1.31, 5.53) -- (0.36, 5.87);
                \draw[thick,  ->, >=stealth] (1.31, 5.53) --  (1.23, 6.53);
                
            \draw[thick,  ->, >=stealth] (1.99, 4.48) -- (2.98, 5.24);	
            
                \draw[thick,  ->, >=stealth] (2.98, 5.24) -- (3.24, 6.21);
                \draw[thick,  ->, >=stealth] (2.98, 5.24) -- (3.98, 5.24);

        \draw[thick,  ->, >=stealth] (1.73, 3) -- (3.14, 2.49);

            \draw[thick,  ->, >=stealth] (3.14, 2.49) -- (4.29, 2.97);

                \draw[thick,  ->, >=stealth] (4.29, 2.97) -- (4.79, 3.84);
                \draw[thick,  ->, >=stealth] (4.29, 2.97) -- (5.26, 2.71);
            
            \draw[thick,  ->, >=stealth] (3.14, 2.49) -- (3.72, 1.38);
                
                \draw[thick,  ->, >=stealth] (3.72, 1.38) -- (4.63, 0.96);
                \draw[thick,  ->, >=stealth] (3.72, 1.38) -- (3.55, 0.39);
                
    \draw[thick,  ->, >=stealth] (0,2) -- (-1.73,3);
    
        \draw[thick,  ->, >=stealth] (-1.73, 3) -- (-1.99, 4.48);	
    
            \draw[thick,  ->, >=stealth] (-1.99, 4.48) -- (-1.31, 5.53);
    
                \draw[thick,  ->, >=stealth] (-1.31, 5.53) -- (-0.36, 5.87);
                \draw[thick,  ->, >=stealth] (-1.31, 5.53) -- (-1.23, 6.53);
    
            \draw[thick,  ->, >=stealth] (-1.99, 4.48) -- (-2.98, 5.24);	
    
                \draw[thick,  ->, >=stealth] (-2.98, 5.24) -- (-3.24, 6.21);
                \draw[thick,  ->, >=stealth] (-2.98, 5.24) -- (-3.98, 5.24);
    
        \draw[thick,  ->, >=stealth] (-1.73, 3) -- (-3.14, 2.49);
    
            \draw[thick,  ->, >=stealth] (-3.14, 2.49) -- (-4.29, 2.97);
    
                \draw[thick,  ->, >=stealth] (-4.29, 2.97) -- (-4.79, 3.84);
                \draw[thick,  ->, >=stealth] (-4.29, 2.97) -- (-5.26, 2.71);
    
            \draw[thick,  ->, >=stealth] (-3.14, 2.49) -- (-3.72, 1.38);
    
                \draw[thick,  ->, >=stealth] (-3.72, 1.38) -- (-4.63, 0.96);
                \draw[thick,  ->, >=stealth] (-3.72, 1.38) -- (-3.55, 0.39);

\end{tikzpicture}
\caption{Tree of $F$-polynomials.}
\label{fig:Markov Fpoly}
\end{center}
\end{figure}
\begin{ex}
(a) Further examples of Markov cluster variables are as follows.
\begin{eqnarray*}
\X_{\frac13} &=& \dfrac{x_1^6 + 3x_1^4x_2^2 + 3x_1^2x_2^4 + x_2^6 + (2x_1^4 + 2x_1^2x_2^2)x_3^2 + x_1^2x_3^4}{x_2^2x_3^3},\\[4pt]
\X_{\frac14} &=& \dfrac{(x_1^8 + 4x_1^6x_2^2 + 6x_1^4x_2^4 + 4x_1^2x_2^6 + x_2^8 + (3x_1^6 + 6x_1^4x_2^2 + 3x_1^2x_2^4)x_3^2 + (3x_1^4 + 2x_1^2x_2^2)x_3^4 + x_1^2x_3^6}{x_2^3x_3^4},\\[4pt]
\X_{\frac23} &=& \dfrac{x_1^8 + 4x_1^6x_2^2 + 6x_1^4x_2^4 + 4x_1^2x_2^6 + x_2^8 + (2x_1^6 + 5x_1^4x_2^2 + 4x_1^2x_2^4 + x_2^6)x_3^2 + x_1^4x_3^4}{x_1x_2^2x_3^4},\\[4pt]
\X_{\frac34}&=& \dfrac{x_1^{12} + 6x_1^{10}x_2^2 + 15x_1^8x_2^4 + 20x_1^6x_2^6 + 15x_1^4x_2^8 + 6x_1^2x_2^{10} + x_2^{12}}{x_1^2x_2^3x_3^6}\\
&&+\; \dfrac{(3x_1^{10} + 14x_1^8x_2^2 + 26x_1^6x_2^4 + 24x_1^4x_2^6 + 11x_1^2x_2^8 + 2x_2^{10})x_3^2}{x_1^2x_2^3x_3^6}\\
&&+\; \dfrac{(3x_1^8 + 8x_1^6x_2^2 + 8x_1^4x_2^4 + 4x_1^2x_2^6 + x_2^8)x_3^4 + x_1^6x_3^6}{x_1^2x_2^3x_3^6}.
\end{eqnarray*}

(b) 
The corresponding $F$-polynomials are the following
\begin{eqnarray*}
F_{\frac13} &=& y_2^2y_3^3 + 3y_2^2y_3^2 + 3y_2^2y_3 + y_2^2 + 2y_2y_3 + 2y_2 + 1,\\[4pt]
F_{\frac14} &=& y_2^3y_3^4 + 4y_2^3y_3^3 + 6y_2^3y_3^2 + 4y_2^3y_3 + 3y_2^2y_3^2 + y_2^3 + 6y_2^2y_3 + 3y_2^2 + 2y_2y_3 + 3y_2 + 1,\\[4pt]
F_{\frac23} &=& y_1y_2^2y_3^4 + 2y_1y_2^2y_3^3 + y_2^2y_3^4 + y_1y_2^2y_3^2 + 4y_2^2y_3^3 + 6y_2^2y_3^2 + 4y_2^2y_3 + 2y_2y_3^2 + y_2^2 + 4y_2y_3 + 2y_2 + 1,\\[4pt]
F_{\frac34} &=& y_1^2y_2^3y_3^6 + 2y_1^2y_2^3y_3^5 + 2y_1y_2^3y_3^6 + y_1^2y_2^3y_3^4 + 8y_1y_2^3y_3^5 + y_2^3y_3^6 + 12y_1y_2^3y_3^4 + 6y_2^3y_3^5 + 8y_1y_2^3y_3^3\\
&&+\;  2y_1y_2^2y_3^4 + 15y_2^3y_3^4 + 2y_1y_2^3y_3^2 + 4y_1y_2^2y_3^3 + 20y_2^3y_3^3 + 3y_2^2y_3^4 + 2y_1y_2^2y_3^2 + 15y_2^3y_3^2 \\
&&+\;  12y_2^2y_3^3 + 6y_2^3y_3 + 18y_2^2y_3^2 + y_2^3 + 12y_2^2y_3 + 3y_2y_3^2 + 3y_2^2 + 6y_2y_3 + 3y_2 + 1
\end{eqnarray*}
\end{ex}

As recalled in the previous sections the Markov number $m^t$ appears as the top right element of the corresponding Cohn matrix.
We introduce a deformation of the Cohn matrices using the parameters $(y_1,y_2,y_3)$ leading to a generalization of this fact.
Consider the $3$-parameter quantisation governed by
\begin{eqnarray*}
   \hat A &=& 
   \begin{pmatrix}
        y_3 & 1 \\
        0 & 1
    \end{pmatrix}
    \begin{pmatrix}
        y_2 & 0 \\
        y_2 & 1
    \end{pmatrix} \\
    &=&
     \begin{pmatrix}
        y_2 + y_2 y_3 & 1 \\
        y_2 & 1
    \end{pmatrix}.
\\[6pt]
   \hat B &=& \begin{pmatrix}
        y_3 & 1 \\
        0 & 1
    \end{pmatrix}
    \begin{pmatrix}
        y_1 & 1 \\
        0 & 1
    \end{pmatrix}
    \begin{pmatrix}
        y_2 & 0 \\
        y_2 & 1
    \end{pmatrix}
    \begin{pmatrix}
        y_3 & 0 \\
        y_2 & 1
    \end{pmatrix} \\[2pt]
    &=& \begin{pmatrix}
        y_1 y_2 y_3^2 + y_1 y_2^2 + 2 y_1 y_2 + y_2 & y_3 + 1 \\[4pt]
        y_2 y_3 + y_2 & 1
    \end{pmatrix}.
\end{eqnarray*}
Note that these are product of what we can consider as $y$-deformed versions of two standard generators $T$ and $L$ of $\SL_2(\mathbb{Z})$.
Specializing $y_1=y_2=y_3=q$ in $\hat A, \hat B$ will return the $q$-deformed Cohn matrices $A(2)_q, B(2)_q$,  up to a power of $q$.

\begin{conj}
For every rational $t$, corresponding to the Christoffel word $\tilde w_t (A,B)$, the deformed matrix $\hat C_t$ given by $\tilde w_t (\hat A, \hat B)$ has a top right entry equals to the $F$-polynomial 
$F_t$, up to a unit element of the ring $\Z_{}[y_1, y_2, y_3]$. 
In particular for $0\leq t \leq 1$ the top right entry is exactly $F_t$.
\end{conj}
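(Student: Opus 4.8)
The plan is to prove the conjecture by induction along the Cohn tree (equivalently the Farey / Stern--Brocot tree), in the same spirit as the proof of Theorem~\ref{CohnShiftThm}, but tracking the top right entry of $\hat C_t$ in place of the normalized trace. The base case is the direct computation $\hat C_{0/1}=\hat A$, $\hat C_{1/1}=\hat B$, whose top right entries are $1=F_{0/1}$ and $y_3+1=F_{1/1}$, together with the observation that the determinants $\det\hat A=y_2y_3$ and $\det\hat B=y_1y_2y_3^2$ are monomials, so $\det\hat C_t=\tilde w_t(y_2y_3,\,y_1y_2y_3^2)$ is a monomial for every $t$. The other ingredient needed is the mutation relation for the $F$-polynomials of the Markov quiver --- the cluster-algebraic refinement of the Vieta transform $c'=3ab-c$ --- which, by the separation-of-additions formula \cite{FZ4,FWZ}, has the shape $F_{t\oplus t'}\,F_{t\ominus t'}=\mathbf y^{\mathbf a}F_t^{\,2}+\mathbf y^{\mathbf b}F_{t'}^{\,2}$ for suitable monomials $\mathbf y^{\mathbf a},\mathbf y^{\mathbf b}$ controlled by the $c$-vectors.

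The delicate point is that the $(1,2)$-entry of a product $\hat C_t\hat C_{t'}$ involves the entire first row of $\hat C_t$ and the entire second column of $\hat C_{t'}$, so a statement about top right entries alone does not propagate along the tree. I would therefore strengthen the induction hypothesis to a normal form for the \emph{whole} matrix: for each $t$, every entry of $\hat C_t$ equals an explicit monomial in $y_1,y_2,y_3$ times an $F$-polynomial indexed by $t$ or by one of the two Farey neighbours of $t$ bounding the local region --- exactly as, classically, a Cohn matrix is assembled from the Markov triple surrounding it \cite{Aig}. One then checks that this form is stable under the branching $\hat C_{t\oplus t'}=\hat C_t\hat C_{t'}$: expanding the product and collecting terms, the $F$-polynomial part reduces to the mutation relation above, and the remaining work is bookkeeping of the monomial prefactors, governed by the two determinants and by the $g$-vectors of the algebra.

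As an independent route, and as a cross-check when $0\le t\le 1$, one can use a combinatorial model. By the Musiker--Schiffler--Williams expansion formula for surface cluster algebras \cite{MSW}, applied to the once-punctured torus, $F_t$ is the generating function, over perfect matchings $\mathfrak m$ of the snake graph $\G_t$, of the height monomial in $y_1,y_2,y_3$ of the symmetric difference of $\mathfrak m$ with the minimal matching. Promoting this statistic to monomial edge weights (in the manner of Section~\ref{WeghtedSnakeS}) and rerunning the box-by-box transfer-matrix computation of Section~\ref{SnakeSec} --- now anchored at the $n=2$ Cohn matrices, since $\hat A$ and $\hat B$ specialize to $A(2)_q$ and $B(2)_q$ up to a power of $q$ --- expresses $F_t$ as a matrix coefficient of $\hat C_t=\tilde w_t(\hat A,\hat B)$ built from the graph's boundary, which for $t\in[0,1]$ is precisely its top right entry (the identification using either a direct choice of boundary vectors or the $y$-analogue of Lemma~\ref{Trace_coeffs_relation}).

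The main obstacle is the bookkeeping in the induction step of the second paragraph: pinning down the exact monomial prefactors and the precise Farey-neighbour labels of the off-diagonal and lower entries of the normal form, and verifying that the product identity closes. This is the $y$-graded version of Aigner's most delicate Cohn-matrix lemma and carries strictly more data, so the difficulty is getting the normal form right rather than any single estimate. A secondary point one must address is why the ambiguous unit of the statement is forced to equal $1$ precisely on $[0,1]$: the natural explanation is that there $\tilde w_t$ begins with $A$ and ends with $B$, so $\G_t$ carries a minimal matching of weight $1$ and no global monomial appears, whereas the reflections of the tree producing $t\notin[0,1]$ conjugate $\hat C_t$ by a diagonal monomial factor.
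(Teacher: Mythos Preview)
The statement you are addressing is a \emph{Conjecture} in the paper, not a theorem: the authors offer no proof, writing only that ``Computer experiments are the evidence for this conjecture'' and displaying the two worked examples $\hat C_{1/2}$ and $\hat C_{1/0}$. There is thus no proof in the paper against which to compare your attempt.

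On its own merits, your proposal is a reasonable sketch of a strategy but not a proof. You correctly diagnose the obstruction --- the $(1,2)$-entry of $\hat C_t\hat C_{t'}$ involves the whole first row of $\hat C_t$ and second column of $\hat C_{t'}$, so an induction tracking only top right entries cannot close --- and you propose the right kind of remedy, a strengthened hypothesis describing the full matrix $\hat C_t$. However, the specific strengthening you write down (``every entry of $\hat C_t$ equals an explicit monomial times an $F$-polynomial indexed by $t$ or by one of its two Farey neighbours'') already fails in the first nontrivial case: the bottom right entry of $\hat C_{1/2}$ is $y_2y_3+y_2+1$, which is not a unit multiple of $F_{0/1}=1$, of $F_{1/1}=y_3+1$, or of $F_{1/2}$. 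So the substance of the problem is precisely to find the correct normal form, which you have relegated to ``bookkeeping''. Your alternative route via the Musiker--Schiffler--Williams expansion for the once-punctured torus is more promising and is plausibly how a proof would eventually go, but the identification of the $y$-weighted transfer matrices with $\hat A,\hat B$, and of the boundary vectors with those extracting the $(1,2)$-entry, is asserted rather than carried out. In short: you have outlined two plausible attacks on what the paper leaves as an open problem, and you have correctly located the hard step in each, but neither route is completed.
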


Computer experiments are the evidence for this conjecture. For instance
$$
\hat C_{\frac12}=\hat A \hat B= \begin{pmatrix}(y_1y_2y_3^2 + y_2y_3^2 + 2y_2y_3 + y_2 + 1)y_2(y_3 + 1)          &                  y_2y_3^2 + 2y_2y_3 + y_2 + 1\\[4pt]
  (y_1y_2y_3^2 + y_2y_3^2 + 2y_2y_3 + y_2 + y_3 + 1)y_2             &                           y_2y_3 + y_2 + 1

\end{pmatrix},
$$

$$
\hat C_{\frac10}={\hat A}^{-1} \hat B= \begin{pmatrix}y_1y_3 + y_3 + 1        &                 y_2^{-1} \\[4pt]
 -y_1y_2y_3          &                        0
\end{pmatrix}.
$$

\bigskip

\noindent{\bf Acknowledgments}.
We are grateful to Ralf Schiffler and Alexander Veselov for enlightening discussions.
This collaborative work received the support of a SSHN Fellowship from the French Embassy in the UK.

\bigskip

\noindent{\bf Conflict of interest statement}.
On behalf of all authors, the corresponding author states that there is no conflict of interest.


\bigskip 

\bigskip  

\noindent
{\sc
{Sam Evans,
Department of Mathematical Sciences, Loughborough University, Loughborough LE11
3TU, UK},\\
{Email : S.J.Evans@lboro.ac.uk}

\medskip

\noindent
{Perrine Jouteur,
Universit\'e de Reims Champagne Ardenne,
Laboratoire de Math\'e\-ma\-tiques, CNRS UMR9008,
Moulin de la Housse - BP 1039,
51687 Reims cedex 2,
France
}\\
{Email: perrine.jouteur@univ-reims.fr}

\medskip

\noindent
{Sophie Morier-Genoud,
Universit\'e de Reims Champagne Ardenne,
Laboratoire de Math\'e\-ma\-tiques, CNRS UMR9008,
Moulin de la Housse - BP 1039,
51687 Reims cedex 2,
France,
}\\
{Email:  sophie.morier-genoud@univ-reims.fr}

\medskip

\noindent
{Valentin Ovsienko,
Centre National de la Recherche Scientifique,
Laboratoire de Math\'e\-ma\-tiques,
Universit\'e de Reims Champagne Ardenne,
Moulin de la Housse - BP 1039,
51687 Reims cedex 2,
France},\\
{Email:  valentin.ovsienko@univ-reims.fr}
}

\end{document}